\theoremstyle{plain}
\newtheorem{Thm}{Theorem}
\newtheorem{Lem}{Lemma}
\theoremstyle{definition}
\theoremstyle{remark}
\def\Z{\mathbb Z}
\def\N{\mathbb N}
\def\C{\mathbb C}
\def\F{\mathcal {F}}
\def\fS{\mathfrak S}
\def\T{\mathbb T}
\def\R{\mathbb R}
\def\P{\mathcal P}
\def\M{\mathfrak M}
\def\m{\mathfrak m}
\def\a{{\mathfrak a}}
\def\b{{\mathfrak b}}
\def\RR{\mathcal R}
\def\B{\mathcal B}
\def\1{{\bf 1}}
\def\pmod #1{\ ({\rm mod}\ #1)}
\begin{document}
\title{Chen's primes and ternary Goldbach problem}
\author{Hongze Li}
\address{
Department of Mathematics, Shanghai Jiaotong University, Shanghai
200240, People's Republic of China} \email{lihz@sjtu.edu.cn}
\author{Hao Pan}
\address{Department of Mathematics, Nanjing University,
Nanjing 210093, People's Republic of
China}\email{haopan79@yahoo.com.cn} \keywords{Chen prime; Ternary
Goldbach problem; Rosser's weight} \subjclass[2000]{Primary 11P32;
Secondary 11N36, 11P55}\thanks{This work was supported by the
National Natural Science Foundation of China (Grant No. 10771135).}
\maketitle
\begin{abstract}
We prove that there exists a $k_0>0$ such that every sufficiently
large odd integer $n$ with $3\mid n$ can be represented as
$p_1+p_2+p_3$, where $p_1,p_2$ are Chen's primes and $p_3$ is a
prime with $p_3+2$ has at most $k_0$ prime factors.
\end{abstract}

\section{Introduction}
\setcounter{equation}{0} \setcounter{Thm}{0} \setcounter{Lem}{0}
\setcounter{Cor}{0}

Let $\P$ denote the set of all primes. Define
$$
\P_k=\{n:\, n\in \mathbb{N} \text{ and } n\text{ has at most
}k\text{ prime divisors}\}
$$
and
$$
\P_k^{(2)}=\{p\in\P:\, p+2\in\P_k\}.
$$
The well-known twin primes conjecture asserts that $\P_1^{(2)}$
has infinitely many elements. Nowadays, the best result on the
twin primes conjecture belongs to Chen \cite{Chen73}, who proved
that $\P_2^{(2)}$ has infinitely many elements. In fact, Chen
proved that for sufficiently large $x$,
$$
|\{p\in\P_2^{(2)}:\, p\leq x,\
(p+2,P(x^{1/10}))=1\}|\gg\frac{x}{(\log x)^2},
$$
where
$$
P(z)=\prod_{p<z}p.
$$
In Iwaniec's unpublished notes \cite{Iwaniec96}, the exponent
$1/10$ can be improved to $3/11$. In \cite{GreenTao08}, Green and
Tao say a prime $p$ is Chen's prime if $p\in\P_2^{(2)}$.

On the other hand, in 1937 Vinogradov \cite{Vinogradov37} solved
the ternary Goldbach problem and showed that every sufficiently
large odd integer can be represented as the sum of three primes.
Two years later, using Vinogradov's method, van der Corput
\cite{Corput39} proved that the primes contain infinitely many
non-trivial three term arithmetic progressions (3AP). In 1999,
with the help of the vector sieve method, Tolev \cite{Tolev99}
proved that there exist infinitely many non-trivial 3APs
$\{p_1,p_2,p_3\}$ of primes, satisfying $p_1\in\P_4^{(2)}$,
$p_2\in\P_5^{(2)}$ and $p_3\in\P_{11}^{(2)}$.

However, in \cite{GreenTao08}, with the help of the Szemer\'edi
theorem, their transference principle and a result of Goldston and
Y\i ld\i r\i m, Green and Tao proved that the primes contain
arbitrarily long non-trivial arithmetic progressions. Certainly this
is a remarkable breakthrough in additive number theory. Furthermore,
Green and Tao also claimed that using their method, one can prove
that Chen's primes contain arbitrarily long non-trivial arithmetic
progressions. And for the 3APs of Chen's primes, they proposed a
detail proof in \cite{GreenTao06}.

Let us return to the ternary Goldbach problems. In \cite{Peneva00},
using Tolev's method, Peneva proved that every sufficiently large
odd integer $n$ with $3\mid n$ can be represented as $n=p_1+p_2+p_3$
with $p_i\in\P_{k_i}^{(2)}$, $k_1=k_2=5$ and $k_3=8$. Subsequently,
Tolev \cite{Tolev00} improved Peneva's result to $k_1=2$, $k_2=5$
and $k_3=7$. Recently, Meng \cite{Meng07} proved that every
sufficiently large odd integer $n$ with $3\nmid n-1$ can be
represented as $n=p_1+p_2+p_3$, where $p_1$ is Chen's prime,
$p_2\in\P_3^{(2)}$ and $p_3\in\P$ (not of special type!).

Of course, we wish to prove that every sufficiently large odd
integer $n$ with $3\mid n$ can be represented as the sum of three
Chen's primes. Unfortunately, as we shall see later, it seems not
easy. The key of Green and Tao's proof in \cite{GreenTao06} is to
transfer Chen's primes to a subset with positive density of
$\Z_N=\Z/N\Z$ (where $N$ is a large prime). But this density is too
small. However, in the present paper, we shall prove the following
 result:
\begin{Thm}
\label{chengoldbach} There exists a positive integer $k_0$ such that
every sufficiently large odd integer $n$ with $3\mid n$ can be
represented as $p_1+p_2+p_3$, where $p_1,p_2$ are Chen's primes and
$p_3\in\P_{k_0}^{(2)}$.
\end{Thm}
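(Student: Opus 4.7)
The plan is to apply the Hardy--Littlewood circle method combined with non-negative sieve weights of Rosser type. I would construct weights $\rho,\sigma\geq 0$ on $\P$ such that $\rho(p)>0\Rightarrow p\in\P_2^{(2)}$ and $\sigma(p)>0\Rightarrow p\in\P_{k_0}^{(2)}$, then prove that the weighted representation count
$$
\mathcal R(n)=\sum_{p_1+p_2+p_3=n}\rho(p_1)\rho(p_2)\sigma(p_3)\log p_1\log p_2\log p_3
$$
is strictly positive for all sufficiently large $n$ satisfying the hypotheses. Positivity of $\mathcal R(n)$ immediately yields the theorem, since each surviving triple $(p_1,p_2,p_3)$ meets the sieve conclusions built into $\rho$ and $\sigma$.

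For $\rho$ I would imitate Chen's original construction: apply the Rosser linear sieve to the sequence $\{p+2:p\leq n\}$ at level $D_1=n^{\theta}$ with $\theta$ close to $1/2$ (justified by Bombieri--Vinogradov), and combine it with Chen's switching principle so that the surviving integers $p+2$ have at most two prime factors; this produces $\sum_{p\leq n}\rho(p)\log p\gg n/\log n$. For $\sigma$ a cruder Rosser lower-bound weight is enough: take $\sigma(p)=\sum_{d\mid p+2}\lambda_d^-$ with level $D_3=n^{1/2-\varepsilon}$ and sifting parameter $z_3=n^{1/k_0}$, so that $\sigma(p)>0$ forces $p+2$ to have no prime factor below $n^{1/k_0}$ and hence at most $k_0$ prime factors. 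With
$$
F(\alpha)=\sum_{p\leq n}\rho(p)(\log p)\,e(p\alpha),\qquad G(\alpha)=\sum_{p\leq n}\sigma(p)(\log p)\,e(p\alpha),
$$
one has $\mathcal R(n)=\int_0^1 F(\alpha)^2 G(\alpha)\,e(-n\alpha)\,d\alpha$. I would decompose $[0,1]$ into Farey major arcs $\mathfrak M$ near $a/q$ with $q\leq(\log n)^B$ and complementary minor arcs $\mathfrak m$. On $\mathfrak M$ the standard analysis, redistributing $\rho$ and $\sigma$ along arithmetic progressions via Bombieri--Vinogradov up to moduli $n^{1/2}(\log n)^{-A}$, yields
$$
\int_{\mathfrak M}F^2 G\,e(-n\alpha)\,d\alpha=\mathcal C(\rho,\rho,\sigma)\,\mathfrak S(n)\,\frac{n^2}{2}\bigl(1+o(1)\bigr),
$$
where $\mathfrak S(n)$ is the classical ternary Goldbach singular series, positive for odd $n$ with $3\mid n$, and $\mathcal C(\rho,\rho,\sigma)>0$ is a product of local sieve densities.

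On the minor arcs I would use
$$
\Big|\int_{\mathfrak m}F^2G\,e(-n\alpha)\,d\alpha\Big|\leq\Big(\sup_{\alpha\in\mathfrak m}|G(\alpha)|\Big)\int_0^1|F(\alpha)|^2\,d\alpha.
$$
Parseval together with the Selberg-type upper bound from the upper Rosser companion of $\rho$ gives $\int_0^1|F|^2\,d\alpha\ll n(\log n)^{-1}$. For the supremum, expand $\sigma$ as a short divisor sum, swap summations, and apply Vaughan's identity to $\sum_{p\equiv-2\pmod{d}}(\log p)\,e(p\alpha)$ uniformly in $d\leq D_3$; this produces $\sup_{\mathfrak m}|G|\ll n(\log n)^{-A}$ for any fixed $A$, so the minor arc contribution is $o\bigl(n^2(\log n)^{-3}\bigr)$ and is dominated by the major arc main term.

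The main obstacle is verifying $\mathcal C(\rho,\rho,\sigma)>0$. Because only a lower-bound sieve is available for $\rho$, and Chen's switching introduces a subtracted piece, the constant must be tracked carefully through the triple convolution and the corresponding local factors at each prime. The value of $k_0$ is determined precisely by this positivity condition: it must be taken large enough that the density supplied by $\sigma$ (which tends to $1$ as $k_0\to\infty$ by the fundamental lemma of the linear sieve) compensates the unavoidable loss incurred in Chen's construction. Choosing the parameters $\theta,\varepsilon,k_0$ in a compatible way, so that Bombieri--Vinogradov applies to every weighted sum and the main term strictly dominates the minor arc bound, is the crux of the argument and fixes the admissible range of $k_0$.
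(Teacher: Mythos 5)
Your plan is a classical weighted circle-method attack, and the minor-arc side together with the construction of $\sigma$ is sound (the paper's Lemma \ref{spmminor} is essentially your minor-arc bound for $G$, after a $W$-trick). The gap is the major-arc analysis of $F$. To extract a main term from $\int_{\M}F^2G\,e(-n\alpha)\,d\alpha$ you must have an asymptotic for $F(a/q+\beta)$ uniformly in $q\le(\log n)^B$, which amounts to a Siegel--Walfisz type asymptotic for Chen primes in arithmetic progressions; no such asymptotic is known. Chen's construction does not furnish a non-negative weight supported on $\P_2^{(2)}$ whose Fourier transform one can evaluate: it produces a lower bound for a \emph{difference} of sieve sums after a switching, and the switching term is exactly what blocks any pointwise non-negative, arithmetically tractable $\rho$. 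Taking $\rho=\1_{p\in\P_2^{(2)}}$ is non-negative, but then the major-arc approximation for $F$ requires the distribution of Chen primes in progressions, for which only lower bounds are available (the paper's Lemma \ref{chen}). So the obstruction you flag --- positivity of $\mathcal C(\rho,\rho,\sigma)$ --- is not the real one; that constant cannot even be defined because there is no major-arc asymptotic for $F$ to attach it to.

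The paper sidesteps evaluating $F$ on the major arcs altogether. After the $W$-trick, the indicators $\a_1,\a_2$ of the rescaled Chen primes are majorised by the Green--Tao enveloping sieve $\beta_R$ of Lemma \ref{envsieve}, for which a restriction estimate (Proposition 4.2 of \cite{GreenTao06}) controls $\sum_r|\widetilde{\a_i}(r)|^{12/5}$ without ever computing $\widetilde{\a_i}(r)$ for $r\ne 0$; the only arithmetic input on $\a_1,\a_2$ is a lower bound on their mean, namely Chen's theorem in progressions (Lemma \ref{chen}). The indicators are smoothed by Bohr-set convolutions $\a_i'=\a_i*\b_i*\b_i$, their level sets $A_i'$ are large subsets of $\Z_N$, and Pollard's lemma (Lemma \ref{pollard}) produces $\gg N^2$ solutions of $x_1+x_2+x_3=n'$, an estimate transferred back to $\a_1,\a_2,\a_3$ via Lemma \ref{threesum}. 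Crucially, $\a_3$ is normalised to have mean close to $1$, so the hypothesis $\theta_1+\theta_2+\theta_3>1$ of Pollard's lemma holds even though the Chen densities $\theta_1,\theta_2$ are tiny; that is why $p_3$ is only asked to lie in $\P_{k_0}^{(2)}$. Your circle-method route with vector-sieve refinements is the Peneva--Tolev--Meng approach and does yield a single Chen prime in a ternary representation, but squaring the Chen weight to obtain \emph{two} Chen primes is exactly where that method fails and transference is required.
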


In Sections 2 and 3, we shall estimate some exponential sums
involving the primes of special type. The proof of Theorem
\ref{chengoldbach} will be given in Section 4.

\section{The Minor Arcs}
\setcounter{equation}{0} \setcounter{Thm}{0} \setcounter{Lem}{0}
\setcounter{Cor}{0}

Let $k_0\geq 8$ be a fixed integer and $B=6^9$. Suppose that $n$ is
a sufficiently large integer, $W>0$ is an even integer with $W\leq
(\log n)^B$, and $1\leq b\leq W$ satisfies $(b(b+2),W)=1$. Let $\T$
denote the torus $\R/\Z$. For $1\leq q\leq(\log n)^B$, define
$$
\M_{a,q}=\{\alpha\in\T:\,|\alpha q-a|\leq(\log n)^B/n\}.
$$
Let
$$
\M=\bigcup_{\substack{1\leq a\leq q\leq (\log
n)^B\\(a,q)=1}}\M_{a,q}
$$
and $\m=\T\setminus\M$.

Let $D=n^{0.32}$ and $z_0=n^{1/k_0}$. For square-free number
$d=p_1p_2\cdots p_k$ with primes $p_1>p_2>\cdots>p_k$, define
Rosser's weights with the order $D$ by
$$
\lambda_{D}^{+}(d)=\begin{cases} (-1)^k&\text{if }p_1\cdots
p_{2l}p_{2l+1}^3<D\text{ for all }0\leq l\leq (k-1)/2,\\
0&\text{otherwise},
\end{cases}
$$
and
$$
\lambda_{D}^{-}(d)=\begin{cases} (-1)^k&\text{if }p_1\cdots
p_{2l-1}p_{2l}^3<D\text{ for all }1\leq l\leq k/2,\\
0&\text{otherwise}.
\end{cases}
$$
It is easy to see that $\lambda_D^{\pm}(d)=0$ if $d\geq D$. Let $F(s)$ and $f(s)$ denote the functions of linear sieve.
The following lemma is a fundamental result in sieve method.
\begin{Lem}[Iwaniec \cite{Iwaniec80a, Iwaniec80b}]
\label{iwaniec}
Suppose that $\P_*$ is any set of primes and $\omega$ is a multiplicative function satisfying:
$$
0<\omega(p)<p\text{ for }p\in\P_*,\ \omega(p)=0\text{ for }p\not\in\P_*,
$$
and
$$
\prod_{z_1\leq p<z_2}\bigg(1-\frac{\omega(p)}{p}\bigg)^{-1}\leq\frac{\log z_2}{\log z_1}\bigg(1+\frac{L}{\log z_1}\bigg)
$$
for a constant $L>0$ and for all $2\leq z_1\leq z_2$. Then we have
\begin{align}
\label{rosserF}
\prod_{p<z}\bigg(1-\frac{\omega(p)}{p}\bigg)\leq&\sum_{d\mid P_*(z)}\lambda_D^+(d)\frac{\omega(d)}{d}\notag\\
\leq&\prod_{p<z}\bigg(1-\frac{\omega(p)}{p}\bigg)(F(s)+O(e^{\sqrt{L}-s}(\log
D)^{-1/3})),
\end{align}
provided that $2\leq z\leq D$, where $s=\log D/\log z$ and
$$
P_*(z)=\prod_{p\in\P_*\cap[1,z)}p.
$$
Similarly,
\begin{align}
\label{rosserf}
\prod_{p<z}\bigg(1-\frac{\omega(p)}{p}\bigg)\geq&\sum_{d\mid P_*(z)}\lambda_D^-(d)\frac{\omega(d)}{d}\notag\\
\geq&\prod_{p<
z}\bigg(1-\frac{\omega(p)}{p}\bigg)(f(s)+O(e^{\sqrt{L}-s}(\log
D)^{-1/3})),
\end{align}
provided that $2\leq z\leq \sqrt{D}$. Furthermore, for any
square-free integer $q$,
\begin{align}
\label{rosserpm}
\sum_{d\mid q}\lambda_D^-(d)\leq\sum_{d\mid q}\mu(d)\leq\sum_{d\mid q}\lambda_D^+(d).
\end{align}
\end{Lem}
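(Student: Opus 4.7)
The plan is to establish first the elementary combinatorial inequalities (\ref{rosserpm}), and then to bootstrap from these to the upper and lower bounds (\ref{rosserF}) and (\ref{rosserf}) by iterating the Buchstab identity and exploiting the density hypothesis on $\omega$.

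First, I would verify (\ref{rosserpm}) by induction on the number of distinct prime factors of $q$. Writing $q=p_1\cdots p_k$ with $p_1>\cdots>p_k$, the definition of $\lambda_D^+$ excises precisely those ordered divisors $d=p_{i_1}\cdots p_{i_r}$ that violate the stopping rule $p_{i_1}\cdots p_{i_{2l}}p_{i_{2l+1}}^3<D$ for some $l$. Pairing each excised divisor with the ``partner'' obtained by toggling the first violating prime produces a telescoping whose residue has the correct sign; the opposite inequality for $\lambda_D^-$ is symmetric, staggered by parity of the stopping rule.

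Next, to pass to the analytic estimates, I would iterate the Buchstab identity and group the resulting terms by the largest prime factor of $d$ to obtain a representation
$$
\sum_{d\mid P_*(z)}\lambda_D^{\pm}(d)\frac{\omega(d)}{d}=\prod_{p<z}\bigg(1-\frac{\omega(p)}{p}\bigg)+R^{\pm}(z,D),
$$
where $R^{\pm}$ is supported on ``boundary divisors'' on which Rosser's stopping inequality is active. Using the density hypothesis to replace each prime sum in $R^{\pm}$ by a Mertens-type logarithmic integral and reparametrizing $p_i$ via $u_i=\log p_i/\log D$, the remainder becomes an iterated integral over a polytope cut out by the stopping inequalities; comparing with the recursive description of the linear-sieve functions produces the declared factor $F(s)$ or $f(s)$.

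The principal obstacle is obtaining the sharp error $O(e^{\sqrt{L}-s}(\log D)^{-1/3})$. The factor $e^{\sqrt{L}-s}$ arises from repeatedly absorbing the $L/\log z$ correction in the density hypothesis across the $O(s)$ layers of Buchstab iteration, and this part is relatively routine. The subtler $(\log D)^{-1/3}$ saving is invisible to the naive polytope estimate and requires Iwaniec's ``well-factorable'' refinement of Rosser's weights from \cite{Iwaniec80a,Iwaniec80b}, which exploits the fine arithmetic structure of the stopping rule to split each weight into factors of controlled size and then appeals to large-sieve-type bounds on the resulting bilinear forms. This refinement is the real workhorse of the proof and is what I would expect to consume the bulk of the effort.
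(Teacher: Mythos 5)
The paper supplies no proof of this lemma --- it is cited directly to Iwaniec --- so there is no in-paper argument to compare against, and your sketch must be judged on its own merits. Your general framework is on the right track: (\ref{rosserpm}) follows from a Buchstab-type decomposition of $\sum_{d\mid q}\mu(d)(1-\chi^{\pm}(d))$ into subsums of fixed sign, indexed by the first position at which Rosser's stopping rule fails (the standard argument is a partition into such classes rather than the ``toggling / telescoping'' pairing you describe, but the spirit is the same), and (\ref{rosserF}), (\ref{rosserf}) are indeed obtained by iterating Buchstab, passing from prime sums to logarithmic integrals via the density hypothesis, and comparing the resulting boundary integrals with the delay-differential system defining $F$ and $f$.

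The genuine gap is your final paragraph, which misidentifies the mechanism behind the $(\log D)^{-1/3}$ saving. The well-factorable refinement of the Rosser weights developed in \cite{Iwaniec80b} is a device for rewriting the \emph{remainder} sum $\sum_{d<D}\lambda^{\pm}(d)r_d$ as a combination of bilinear forms $\sum_{m<M}\sum_{n<N}a_m b_n r_{mn}$, so that large-sieve or Bombieri--Vinogradov inputs can be brought to bear on it. It is irrelevant to the statement under review, which contains no remainder term at all: the quantity $\sum_{d\mid P_*(z)}\lambda_D^{\pm}(d)\omega(d)/d$ is a deterministic functional of $\omega$, $z$ and $D$, and the asserted $O(e^{\sqrt{L}-s}(\log D)^{-1/3})$ measures only how closely this sum tracks $V(z)F(s)$ (respectively $V(z)f(s)$). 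That error bound is obtained in \cite{Iwaniec80a} by a purely analytic study of the iterated boundary integrals --- comparing the discrete stopping-rule polytope with the continuous one defining $F$ and $f$, optimizing a smoothing/discretization parameter, and using the density hypothesis to control each layer of the iteration. No bilinear forms or large-sieve estimates enter at this stage; appealing to them here is a wrong approach, not merely an omitted detail.
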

Define
$$
S_{n,z_0}(\alpha)=\sum_{\substack{p\leq n\\ p\equiv b\pmod{W}\\
(p+2,P(z_0))=1}}e(\alpha(p-b)/W)\log p.
$$
Clearly
$$
S_{n,z_0}(\alpha)=\sum_{\substack{p\leq n\\ p\equiv
b\pmod{W}}}e(\alpha(p-b)/W)\log p\sum_{d\mid (p+2,P(z_0))}\mu(d).
$$
Let
$$
S_{n,z_0}^{\pm}(\alpha)=\sum_{\substack{p\leq n\\ p\equiv
b\pmod{W}}}e(\alpha(p-b)/W)\log p\sum_{d\mid
(p+2,P(z_0))}\lambda_D^\pm(d).
$$
\begin{Lem}
\label{spm} For any $\alpha\in\T$, we have
$|S_{n,z_0}^{+}(\alpha)-S_{n,z_0}(\alpha)|\leq
S_{n,z_0}^{+}(0)-S_{n,z_0}(0)$ and
$|S_{n,z_0}(\alpha)-S_{n,z_0}^{-}(\alpha)|\leq
S_{n,z_0}(0)-S_{n,z_0}^-(0)$.
\end{Lem}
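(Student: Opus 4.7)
\textbf{Proof plan for Lemma \ref{spm}.}
The plan is to exploit the pointwise inequality \eqref{rosserpm} on a prime-by-prime basis and then collapse the resulting nonnegative sum by the triangle inequality. Write
\[
S_{n,z_0}^{+}(\alpha) - S_{n,z_0}(\alpha) = \sum_{\substack{p\leq n \\ p\equiv b\pmod{W}}} e(\alpha(p-b)/W)\, \log p \, \cdot \, c^{+}(p),
\]
where
\[
c^{+}(p) = \sum_{d\mid (p+2,P(z_0))} \lambda_D^{+}(d) \; - \; \sum_{d\mid (p+2,P(z_0))} \mu(d).
\]
By the last inequality \eqref{rosserpm} of Lemma \ref{iwaniec} applied with $q=(p+2,P(z_0))$, we have $c^{+}(p)\geq 0$ for every prime $p$ appearing in the sum. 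Also $\log p > 0$ on the same range.

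Next I would take absolute values term by term. Since $|e(\alpha(p-b)/W)|=1$ and $c^{+}(p)\log p\geq 0$, the triangle inequality yields
\[
|S_{n,z_0}^{+}(\alpha) - S_{n,z_0}(\alpha)| \; \leq \; \sum_{\substack{p\leq n \\ p\equiv b\pmod{W}}} \log p \cdot c^{+}(p) \; = \; S_{n,z_0}^{+}(0) - S_{n,z_0}(0),
\]
because evaluating at $\alpha=0$ replaces $e(\alpha(p-b)/W)$ by $1$. This is exactly the first asserted inequality.

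The second inequality is entirely symmetric. Setting
\[
c^{-}(p) = \sum_{d\mid (p+2,P(z_0))} \mu(d) \; - \; \sum_{d\mid (p+2,P(z_0))} \lambda_D^{-}(d),
\]
we again get $c^{-}(p)\geq 0$ from the left-hand part of \eqref{rosserpm}, and the same triangle-inequality argument gives $|S_{n,z_0}(\alpha) - S_{n,z_0}^{-}(\alpha)|\leq S_{n,z_0}(0)-S_{n,z_0}^{-}(0)$.

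There is really no obstacle here: the lemma is a formal consequence of the fact that $\lambda_D^{+} - \mu$ and $\mu - \lambda_D^{-}$ are nonnegative after summation over divisors of a square-free integer, together with $|e(\theta)|=1$. The only thing to be slightly careful about is that the three sums $S_{n,z_0}$, $S_{n,z_0}^{+}$, $S_{n,z_0}^{-}$ are all taken over the same range $p\leq n$, $p\equiv b \pmod{W}$, so that the term-by-term comparison is legitimate; this is evident from the definitions.
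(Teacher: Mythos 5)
Your proof is correct and is essentially identical to the paper's: both apply the triangle inequality term-by-term and then use (\ref{rosserpm}) to observe that each coefficient $c^{\pm}(p)$ is nonnegative, so the resulting sum of absolute values collapses to the value at $\alpha=0$. You merely make the nonnegativity step explicit where the paper leaves it implicit.
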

\begin{proof}
By (\ref{rosserpm}),
\begin{align*}
|S_{n,z_0}^{+}(\alpha)-S_{n,z_0}(\alpha)|\leq&\sum_{\substack{p\leq n\\
p\equiv b\pmod{W}}}\log p\bigg|\sum_{d\mid
(p+2,P(z_0))}\lambda^+(d)-\sum_{d\mid
(p+2,P(z_0))}\mu(d)\bigg|\\
=&S_{n,z_0}^{+}(0)-S_{n,z_0}(0).
\end{align*}
The proof of the second inequality is similar.
\end{proof}

Let $\tau$ denote the divisor function.
It is well-known
$$
\sum_{d\leq X}\tau(d)^A\ll_AX(\log X)^{2^A-1}
$$
and
$$
\sum_{d\leq X}\frac{\tau(d)^A}{d}\ll_A(\log X)^{2^A}.
$$
\begin{Lem}
Suppose that $X,X',Y,Y',Z,Z'>0$ satisfy $X\leq X'\leq 2X$, $Y\leq
Y'\leq 2Y$ and $XY\leq Z\leq Z'\leq X'Y'$. For any $d\geq 1$, let
$u_d$, $v_d$, $w_d$ be complex numbers with $|u_d|, |v_d|,
|w_d|\leq\tau(d)^{A}(\log(XY))^{A}$. Suppose that $1\leq a\leq q$
with $(a,q)=1$, and $\alpha\in\T$ with $|\alpha q-a|\leq 1/q$.
Then
\begin{align}
\label{expsum1}
&\sum_{X\leq x\leq X'}u_x\bigg|\sum_{\substack{Y\leq y\leq Y'\\ Z\leq xy\leq Z'\\ xy\equiv b\pmod{W}}}v_ye(\alpha (xy-b)/W)\sum_{\substack{d\mid xy+2\\
d\leq D}}w_d\bigg|\notag\\
\ll&_A
XY(\log(DXYq))^{2^{2A+2}}\tau^{2A+2}(W)\bigg(\frac{1}{q}+\frac{D^2W}{X}+\frac{qW}{XY}\bigg)^{1/4}\notag\\
&+XY^{1/2}(\log(DXY))^{2^{2A+2}}\tau^{2A+1}(W)
\end{align}
provided that $X\geq D^2W$. Furthermore, suppose that $0\leq v_{y_1}\leq v_{y_2}\leq (\log(XY))^A$ for any $y_1,y_2$ with $y_1\leq y_2$. Then
\begin{align}
\label{expsum2}
&\sum_{X\leq x\leq X'}u_x\bigg|\sum_{\substack{Y\leq y\leq Y'\\ Z\leq xy\leq Z'\\ xy\equiv b\pmod{W}}}v_ye(\alpha (xy-b)/W)\sum_{\substack{d\mid xy+2\\
d\leq D}}w_d\bigg|\notag\\
\ll&_A
XY(\log(DXYq))^{6^{A+2}}\bigg(\frac{1}{q}+\frac{DW}{Y}+\frac{qW}{XY}\bigg)^{1/4}
\end{align}
provided that $Y\geq DW$.
\end{Lem}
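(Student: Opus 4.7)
The plan is to establish both \eqref{expsum1} and \eqref{expsum2} by a Cauchy--Schwarz step, expansion of the square, and an appeal to the Vinogradov bound
\[
\sum_{k\leq N}\min\!\bigg(\frac{U}{k},\frac{1}{\|\alpha k\|}\bigg)\ll\bigg(\frac{N}{q}+U+q\bigg)(\log Nq)^{O(1)}
\]
valid whenever $|\alpha-a/q|\leq 1/q^{2}$ with $(a,q)=1$. The only real difference between the two cases is the choice of Cauchy--Schwarz variable: for \eqref{expsum1} I will Cauchy--Schwarz in the outer variable $x$, while for \eqref{expsum2} the monotonicity of $v_y$ permits partial summation first, so that I can invert the order of summation and instead Cauchy--Schwarz in $y$.

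For \eqref{expsum1}, set $T(x)=\sum_y v_y e(\alpha xy/W)\sum_{d\mid xy+2,\,d\leq D}w_d$ with the ranges on $x,y,xy$ and the congruence $xy\equiv b\pmod W$ in force. Cauchy--Schwarz gives $\sum_x u_x|T(x)|\leq (\sum_x|u_x|^{2})^{1/2}(\sum_x|T(x)|^{2})^{1/2}$, and the first factor is $\ll X^{1/2}(\log X)^{O_A(1)}$ by the divisor moment bound. Expanding the square yields a quadruple sum indexed by $y_1,y_2\in[Y,Y']$, $d_1,d_2\leq D$, and $x$. Since every admissible $d$ has all prime factors $\geq z_0\gg W$ so that $(d,W)=1$, the constraints $xy_i\equiv b\pmod W$ and $d_i\mid xy_i+2$ put $x$ in an arithmetic progression of modulus $m\asymp W[d_1,d_2]/(y_1y_2,[d_1,d_2])$, along which the inner sum $\sum_x e(\alpha(y_1-y_2)x/W)$ is geometric and bounded by $\min(X/m,\,1/\|\beta\|)$ for a rational $\beta$ depending only on $\alpha,y_1-y_2,d_1,d_2$.

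Split into diagonal and off-diagonal. The diagonal $y_1=y_2$, estimated via $\sum_{d_1,d_2\leq D}|w_{d_1}w_{d_2}|/[d_1,d_2]\ll(\log D)^{O_A(1)}$ together with the hypothesis $X\geq D^{2}W$ (which ensures $X/m\geq 1$), contributes to $\sum_x|T(x)|^2$ an amount that, after the outer Cauchy--Schwarz, produces the secondary term $XY^{1/2}(\log)^{O_A(1)}\tau^{2A+1}(W)$ in \eqref{expsum1}, the powers of $\tau(W)$ arising from the common-divisor analysis $(y_i,W)$. For the off-diagonal $y_1\neq y_2$, setting $k=y_1-y_2$ and performing the $d_1,d_2$-sum with divisor moments reduces everything to a Vinogradov-type sum in $k\in[-2Y,2Y]\setminus\{0\}$; applying the displayed inequality above and tracking the dependence of $\beta$ on $d_1,d_2$ gives a bound of the shape $XY^{2}(1/q+D^{2}W/X+qW/(XY))^{1/2}(\log)^{O_A(1)}$, and taking the square root forced by Cauchy--Schwarz yields the quartic exponent and the main term of \eqref{expsum1}.

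For \eqref{expsum2}, the monotonicity of $v_y$ allows partial summation to reduce the bound to $(\log XY)^A\sup_t|V(t;x)|$, where $V(t;x)=\sum_{Y\leq y\leq t}e(\alpha xy/W)\sum_{d\mid xy+2,\,d\leq D}w_d$ with the remaining conditions. Swapping summation and Cauchy--Schwarzing in $y$ (so that the $L^2$ factor contributes $Y^{1/2}$) leads to the same sort of diagonal/off-diagonal split, but the diagonal $x_1=x_2$ is now governed by the single divisibility $[d_1,d_2]\mid xy+2$ rather than two separate ones; this is what reduces $D^{2}$ to $D$ and produces the $DW/Y$ term in place of $D^{2}W/X$. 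The off-diagonal is treated exactly as before with the roles of $x$ and $y$ exchanged, and $Y\geq DW$ plays the role formerly held by $X\geq D^{2}W$. The main technical obstacle throughout is the bookkeeping of common factors between $y_1,y_2,d_1,d_2,$ and $W$, which simultaneously shrinks the AP modulus and inflates the divisor weights; controlling this interplay cleanly is what produces the precise dichotomy $D^{2}W/X$ versus $DW/Y$ and the powers of $\tau(W)$ in the two estimates.
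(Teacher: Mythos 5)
Your treatment of \eqref{expsum1} is essentially the paper's argument: Cauchy--Schwarz in $x$, expand the square, bound the inner $x$-sum by $\min(X/[d_1,d_2,W],\,1/\|\alpha[d_1,d_2,W](y_1-y_2)/W\|)$, split off the diagonal $y_1=y_2$ (which gives the secondary $XY^{1/2}$ term), and handle the off-diagonal via a dyadic decomposition, a further Cauchy--Schwarz on the $h$-sum to isolate the divisor weights, and the Vinogradov estimate (Lemma 2.2 of Vaughan), the two square roots producing the exponent $1/4$. That all matches.

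Your treatment of \eqref{expsum2}, however, does not work as described and does not reflect what the paper does. You propose partial summation to remove $v_y$, then ``Cauchy--Schwarz in $y$'' leading to a ``diagonal $x_1=x_2$ governed by the single divisibility $[d_1,d_2]\mid xy+2$,'' and assert this is what turns $D^2$ into $D$. But $[d_1,d_2]$ can still be as large as $D^2$, so a single congruence $[d_1,d_2]\mid xy+2$ does not reduce the modulus from $D^2$ to $D$. Worse, in the diagonal $x_1=x_2$ the resulting $y$-sum runs over a progression of modulus $\asymp[d_1,d_2]W$, and the unavoidable ``$+1$'' error per pair $(d_1,d_2)$ accumulates to $\gg D^2$ after summing over $d_1,d_2\leq D$, which need not be $o(Y)$ under $Y\geq DW$. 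So the diagonal is not under control along this route. The paper instead keeps the Cauchy--Schwarz in $x$ exactly as in \eqref{expsum1}, but upon writing $|T(x)|^2=\big|\sum_d w_d\sum_y v_ye(\alpha xy/W)\big|^2$ it \emph{does not} expand the $y$-sum into a $y_1,y_2$ double sum; it factors $|T(x)|^2$ as a product over $i=1,2$ of the two single $y_i$-sums attached to $d_1$ and $d_2$, and then uses the monotonicity of $v_y$ and partial summation to bound each such $y_i$-sum by $(\log XY)^A\min\{Y/[d_i,W],\,1/\|\alpha[d_i,W]x/W\|\}$. After this the two $d$-sums decouple into $\big(\sum_{h\leq D}\tau(hW)^{A+1}\min\{Y/(hW),1/\|\alpha hx\|\}\big)^2$, where $h=[d,W]/W\leq D$ involves a \emph{single} $d$: that is where $D$ (not $D^2$) enters, and the subsequent dyadic split plus Cauchy--Schwarz brings the Vinogradov-type variable to $hx\leq DX'$, with $Y\geq DW$ guaranteeing the dominant side of the min. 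The key idea you are missing is that the monotonicity of $v_y$ is used to kill the $y$-dependence \emph{inside} each factor before any expansion in $y$, which is precisely what prevents $[d_1,d_2]$ from ever appearing.
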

\begin{proof} By the Cauchy-Schwarz inequality,
\begin{align*}
&\bigg(\sum_{X\leq x\leq X'}u_x\bigg|\sum_{\substack{Y\leq y\leq Y'\\ Z\leq xy\leq Z'\\ xy\equiv b\pmod{W}}}v_ye(\alpha (xy-b)/W)\sum_{\substack{d\mid xy+2\\
d\leq D}}w_d\bigg|
\bigg)^2\\
\leq&\bigg(\sum_{X\leq x\leq X'}|u_x|^2\bigg)
\bigg(\sum_{\substack{X\leq x\leq X'\\
d_1,d_2\leq
D}}w_{d_1}\overline{w_{d_2}}\sum_{\substack{Y\leq
y_1,y_2\leq Y'\\ Z\leq xy_1,xy_2\leq Z'\\ xy_1,xy_2\equiv b\pmod{W}\\ xy_i\equiv
-2\pmod{d_i}\text{ for }i=1,2}}v_{y_1}\overline{v_{y_2}}e(\alpha x(y_1-y_2)/W)\bigg)\\
\ll&_AX(\log X)^{6^A}\sum_{\substack{1\leq b'\leq W,\ (b',W)=1\\
Y\leq
y_1,y_2\leq Y'\\ y_1\equiv y_2\equiv b'\pmod{W}\\
d_1,d_2\leq
D}}v_{y_1}\overline{v_{y_2}}w_{d_1}\overline{w_{d_2}}\sum_{\substack{X\leq x\leq X'\\
Z\leq xy_1,xy_2\leq Z'\\
xb'\equiv b\pmod{W}\\ xy_i\equiv -2\pmod{d_i}\text{ for
}i=1,2}}e(\alpha x(y_1-y_2)/W).\\
\end{align*}
We have
\begin{align*}
&\sum_{\substack{X\leq x\leq X'\\
Z\leq xy_1,xy_2\leq Z'\\
xb'\equiv b\pmod{W}\\ xy_i\equiv -2\pmod{d_i}\text{ for
}i=1,2}}e(\alpha x(y_1-y_2)/W)\\
=&\sum_{\substack{\max\{X, Z/y_1, Z/y_2\}\leq x\leq \min\{X', Z'/y_1,Z'/y_2\}\\
xb'\equiv b\pmod{W}\\ xy_i\equiv -2\pmod{d_i}\text{ for
}i=1,2}}e(\alpha x(y_1-y_2)/W)\\
\ll&\min\bigg\{\frac{X}{[d_1,d_2,W]},\frac{1}{\|\alpha
[d_1,d_2,W](y_1-y_2)/W\|}\bigg\},
\end{align*}
where $\|\theta\|=\min\{|\theta-t|:\, t\in\Z\}$. Hence for each $1\leq b'\leq W$ with $(b',W)=1$, we have
\begin{align*}
&\sum_{\substack{Y\leq y_1,y_2\leq Y'\\ y_1\equiv y_2\equiv
b'\pmod{W}\\
d_1,d_2\leq D}
}v_{y_1}\overline{v_{y_2}}w_{d_1}\overline{w_{d_2}}\bigg|\sum_{\substack{X\leq x\leq X'\\
Z\leq xy_1,xy_2\leq Z'\\ xb'\equiv b\pmod{W}\\ xy_i\equiv -2\pmod{d_i}\text{ for
}i=1,2}}e(\alpha x(y_1-y_2)/W)\bigg|\\
\ll&\sum_{\substack{1\leq y_1,y_2\leq Y'\\ y_1\equiv y_2\equiv
b'\pmod{W}\\
d_1,d_2\leq D}}|v_{y_1}||v_{y_2}||w_{d_1}||w_{d_2}|\min\bigg\{\frac{X}{[d_1,d_2,W]},\frac{1}{\|\alpha
[d_1,d_2,W](y_1-y_2)/W\|}\bigg\}\\
\ll&Y(\log(XY))^{6^A}\bigg(\sum_{\substack{h\leq
D^2Y}}\tau^{4A+3}(hW)\min\bigg\{\frac{XY}{hW},\frac{1}{\|\alpha
h\|}\bigg\}+\frac{X}{W}\sum_{\substack{h\leq
D^2}}\frac{\tau^{4A+2}(hW)}{h}\bigg)\\
&(\text{where }h=[d_1,d_2,W](y_1-y_2)/W\text{ if }y_1>y_2, \text{ and }h=[d_1,d_2,W]/W)\text{ if }y_1=y_2\\
\ll&Y(\log(DXY))^{6^A+1}\tau^{4A+3}(W)\max_{H\leq
D^2Y}\sum_{\substack{H/2\leq h\leq
H}}\tau^{4A+3}(h)\min\bigg\{\frac{XY}{HW},\frac{1}{\|\alpha
h\|}\bigg\}\\
&+\frac{XY\tau^{4A+2}(W)}{W}(\log(DXY))^{2^{4A+3}}.
\end{align*}
Applying Lemma 2.2 of \cite{Vaughan97}, for any $H\leq D^2Y$,
\begin{align*}
&\sum_{\substack{H/2\leq h\leq
H}}\tau^{4A+3}(h)\min\bigg\{\frac{XY}{HW},\frac{1}{\|\alpha
h\|}\bigg\}\\
\leq&\bigg(\sum_{\substack{h\leq
H}}\tau^{8A+6}(h)\bigg)^{1/2}\bigg(\frac{XY}{HW}\sum_{\substack{h\leq
H}}\min\bigg\{\frac{XY}{hW},\frac{1}{\|\alpha
h\|}\bigg\}\bigg)^{1/2}\\
\ll&_A\bigg(\frac{X^2Y^2(\log(DYq))^{2^{8A+6}}}{W^2}\bigg(\frac{1}{q}+\frac{D^2W}{X}+\frac{qW}{XY}\bigg)\bigg)^{1/2}.
\end{align*}
This concludes the proof of (\ref{expsum1}).

Let us turn to (\ref{expsum2}). Clearly
\begin{align*}
&\bigg(\sum_{X\leq x\leq X'}u_x\bigg|\sum_{\substack{Y\leq y\leq Y'\\ Z\leq xy\leq Z'\\ xy\equiv b\pmod{W}}}v_ye(\alpha (xy-b)/W)\sum_{\substack{d\mid xy+2\\
d\leq D}}w_d\bigg|\bigg)^2\\
\leq&\bigg(\sum_{X\leq x\leq X'}|u_x|^2\bigg)\bigg(\sum_{X\leq x\leq X'}\bigg|\sum_{\substack{Y\leq y\leq Y'\\ Z\leq xy\leq Z'\\ xy\equiv b\pmod{W}}}v_ye(\alpha (xy-b)/W)\sum_{\substack{d\mid xy+2\\
d\leq D}}w_d\bigg|^2\bigg)\\
\ll&X(\log X)^{6^A}\sum_{\substack{
1\leq b'\leq W,\ (b',W)=1\\ X\leq x\leq X'\\
x\equiv b'\pmod{W}\\
d_1,d_2\leq D}}|w_{d_1}||w_{d_2}|
\prod_{i=1}^2\bigg|\sum_{\substack{Y\leq
y_i\leq Y'\\ Z\leq xy_i\leq Z'\\ y_ib'\equiv b\pmod{W}\\
xy_i\equiv -2\pmod{d_i}}}v_{y_i}e(\alpha xy_i/W)\bigg|.
\end{align*}
By the partial summation,
\begin{align*}
&\sum_{\substack{Y\leq
y_i\leq Y'\\ Z\leq xy_i\leq Z'\\ y_ib'\equiv b\pmod{W}\\
xy_i\equiv -2\pmod{d_i}}}v_{y_i}e(\alpha xy_i/W)\\
=&v_{Y'}\sum_{\substack{1\leq
y_i\leq Y'\\ Z\leq xy_i\leq Z'\\ y_ib'\equiv b\pmod{W}\\
xy_i\equiv -2\pmod{d_i}}}e(\alpha xy_i/W)-
v_{Y}\sum_{\substack{1\leq
y_i\leq Y-1\\ Z\leq xy_i\leq Z'\\ y_ib'\equiv b\pmod{W}\\
xy_i\equiv -2\pmod{d_i}}}e(\alpha xy_i/W)\\
&-\sum_{Y\leq Y''\leq Y'-1}(v_{Y''+1}-v_{Y''})\sum_{\substack{1\leq
y_i\leq Y''\\ Z\leq xy_i\leq Z'\\ y_ib'\equiv b\pmod{W}\\
xy_i\equiv -2\pmod{d_i}}}e(\alpha xy_i/W)\\
\ll&\bigg(v_{Y'}+v_{Y}
+\sum_{Y\leq Y''\leq Y'-1}(v_{Y''+1}-v_{Y''})\bigg)
\min\bigg\{\frac{Y}{[d_i,W]},\frac{1}{\|\alpha
[d_i,W]x/W\|}\bigg\}\\
\ll&_A(\log(XY))^A
\min\bigg\{\frac{Y}{[d_i,W]},\frac{1}{\|\alpha
[d_i,W]x/W\|}\bigg\}.\\
\end{align*}
For any $1\leq b'\leq W$ with $(b',W)=1$,
\begin{align*}
&\sum_{\substack{
X\leq x\leq X'\\
x\equiv b'\pmod{W}\\
d_1,d_2\leq D}}|w_{d_1}||w_{d_2}|
\prod_{i=1}^2\min\bigg\{\frac{Y}{[d_i,W]},\frac{1}{\|\alpha
[d_i,W]x/W\|}\bigg\}\\
\leq&(\log(XY))^{2A}\sum_{\substack{X\leq x\leq X'\\
x\equiv b'\pmod{W}}}\bigg(\sum_{\substack{
d\leq D}}\tau(d)^A\min\bigg\{\frac{Y}{[d,W]},\frac{1}{\|\alpha
[d,W]x/W\|}\bigg\}\bigg)^2\\
\leq&(\log(XY))^{2A}\sum_{\substack{X\leq x\leq X'\\
x\equiv b'\pmod{W}}}\bigg(\sum_{\substack{
h\leq D}}\tau(hW)^{A+1}\min\bigg\{\frac{Y}{hW},\frac{1}{\|\alpha
hx\|}\bigg\}\bigg)^2\\
&(\text{where }h=[d,W]/W)\\
\leq&(\log(XY))^{2A}\sum_{\substack{X\leq x\leq X'\\
x\equiv b'\pmod{W}}}\bigg(\sum_{2^k\leq D}\sum_{\substack{ 2^k\leq
h\leq
2^{k+1}}}\tau(hW)^{A+1}\min\bigg\{\frac{Y}{hW},\frac{1}{\|\alpha
hx\|}\bigg\}\bigg)^2\\
\ll&(\log(DXY))^{2A+2}\max_{H\leq D}\sum_{\substack{X\leq x\leq X'\\
x\equiv b'\pmod{W}}}\bigg(\sum_{\substack{H/2\leq h\leq H}}\tau(hW)^{A+1}\min\bigg\{\frac{Y}{HW},\frac{1}{\|\alpha
hx\|}\bigg\}\bigg)^2.
\end{align*}
And for any $H\leq D$,
\begin{align*}
&\sum_{\substack{X\leq x\leq X'\\
x\equiv b'\pmod{W}}}\bigg(\sum_{\substack{H/2\leq h\leq H}}\tau(hW)^{A+1}\min\bigg\{\frac{Y}{HW},\frac{1}{\|\alpha
hx\|}\bigg\}\bigg)^2\\
\leq&\sum_{\substack{X\leq x\leq X'\\
x\equiv b'\pmod{W}}}\bigg(\tau(W)^{2A+2}\sum_{\substack{h\leq H}}\tau(h)^{2A+2}\bigg)\bigg(\frac{Y}{HW}\sum_{\substack{h\leq H}}\min\bigg\{\frac{Y}{HW},\frac{1}{\|\alpha
hx\|}\bigg\}\bigg)\\
\ll&_A(\log D)^{2^{2A+2}}Y\sum_{\substack{1\leq x\leq
X'H}}\tau(x)\min\bigg\{\frac{Y}{HW},\frac{1}{\|\alpha x\|}\bigg\}.
\end{align*}
Finally,
\begin{align*}
&\sum_{\substack{1\leq x\leq X'H}}\tau(x)\min\bigg\{\frac{Y}{HW},\frac{1}{\|\alpha
x\|}\bigg\}\\
\ll&\bigg(\sum_{\substack{1\leq x\leq X'H}}\tau(x)^2\bigg)^{1/2}\bigg(\frac{Y}{HW}\sum_{\substack{1\leq x\leq X'H}}\min\bigg\{\frac{X'Y}{xW},\frac{1}{\|\alpha
x\|}\bigg\}\bigg)^{1/2}\\
\ll&\bigg(\frac{X^2Y^2(\log(DXq))^{4}}{W^2}\bigg(\frac{1}{q}+\frac{DW}{Y}+\frac{qW}{XY}\bigg)\bigg)^{1/2}.
\end{align*}
\end{proof}

Define
$$
\tau_k(x)=\{(d_1,d_2,\ldots,d_k):\, d_1d_2\cdots d_k\mid x\}.
$$
Let $G(x)$ be an arbitrary complex function over $\N$. Consider

\medskip\noindent {\bf Type I sums}{\it
$$
\sum_{\substack{M<m\leq M_1\\ L<l\leq L_1\\ P\leq ml\leq
P_1}}a_mG(ml)\qquad\text{and}\qquad\sum_{\substack{M<m\leq M_1\\
L<l\leq L_1\\ P\leq ml\leq P_1}}a_m(\log l)G(ml)
$$
where $M_1\leq 2M$, $L_1\leq 2L$, $|a_m|\leq\tau_5(m)\log P$,}

\medskip\noindent
and

\medskip\noindent
{\bf Type II sums}{\it
$$
\sum_{\substack{M<m\leq M_1\\
L<l\leq L_1\\ P\leq ml\leq P_1}}a_mb_lG(ml)
$$
where $M_1\leq 2M$, $L_1\leq 2L$, $|a_m|\leq\tau_5(m)\log P$,
$|b_l|\leq\tau_5(l)\log P$.}

\medskip
The following Lemma is due to Heath-Brown \cite{HeathBrown82}:
\begin{Lem}
\label{heathbrown} Let $P, P_1, u, v, z$ be positive integers
satisfying $2<P<P_1\leq 2P$, $2\leq u<v\leq z\leq P$, $u^2\leq z$,
$128uz^2\leq P_1$, $2^{18}P_1\leq v^3$. Then we may decompose the
sum
$$
\sum_{P<n\leq P_1}\Lambda(n)G(n)
$$
into $O((\log P)^6)$ sums, each of which is either of type I with $L\geq z$ or of type II with $u\leq L\leq v$.
\end{Lem}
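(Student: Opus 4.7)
The standard approach is to invoke Heath-Brown's combinatorial identity and then split by variable sizes. Since $v\leq z$ and $2^{18}P_1\leq v^3$, we have $P_1\leq z^3$, so for every $n\in(P,P_1]$ the $K=3$ form of the identity applies and
$$
\Lambda(n)=\sum_{j=1}^{3}(-1)^{j-1}\binom{3}{j}\sum_{\substack{n=m_1\cdots m_jn_1\cdots n_j\\ m_1,\ldots,m_j\leq z}}\mu(m_1)\cdots\mu(m_j)\log n_1.
$$
Plugging this into $\sum_{P<n\leq P_1}\Lambda(n)G(n)$ reduces the task, for each $j\in\{1,2,3\}$, to analyzing a $(2j)$-fold sum of the above shape twisted by $G$.

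Next I would decompose each of the $2j$ variables dyadically, writing $m_i\in(M_i,2M_i]$ and $n_i\in(N_i,2N_i]$ with $M_i,N_i$ ranging over powers of two. For the dominant case $j=3$ this produces at most $O((\log P)^6)$ pieces, matching the claimed factor; the cases $j=1,2$ contribute fewer pieces and are absorbed. In each piece the dyadic sizes satisfy $M_i\leq z$ and $M_1\cdots M_jN_1\cdots N_j\asymp P$.

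The heart of the argument is the combinatorial step that turns each piece into a type I or type II sum. I partition the $2j$ dyadic factors into an $m$-block and an $l$-block: the product over the $m$-block becomes the new $m$ variable (inheriting the M\"obius factors and, if $n_1$ is placed there, the $\log n_1$), and similarly for the $l$-block. Two cases arise. (a) If some $N_i\geq z$, place $n_i$ alone into the $l$-block, producing a type I sum with $L=N_i\geq z$. (b) Otherwise every dyadic factor has size $\leq z$; I then greedily add factors to the $l$-block in nondecreasing order of size until the running product first reaches $u$. The hypotheses $u^2\leq z$, $128uz^2\leq P_1$ and $2^{18}P_1\leq v^3$ are engineered so that the resulting dyadic size of $l$ lies in $[u,v]$, yielding a type II sum. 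Because a product of dyadic intervals is contained in an interval of bounded multiplicative length, a final $O(1)$-fold dyadic refinement of the assembled $m$ and $l$ ranges is harmless.

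The main obstacle is the verification in case (b) that the greedy $L$ always lands in $[u,v]$, irrespective of the dyadic configuration. The three numerical hypotheses of the lemma are sharp for exactly this purpose: they control the worst-case ratio between the running product before and after absorbing a single factor of size $\leq z$, forcing the window $[u,v]$ to be wide enough to capture it in every configuration compatible with $\prod M_i\prod N_i\asymp P$. Once this combinatorial fact is in place, the coefficient bounds $|a_m|\leq\tau_5(m)\log P$ and $|b_l|\leq\tau_5(l)\log P$ are immediate: each of $m$ and $l$ is a product of at most five dyadic blocks, so the number of contributing factorizations is bounded by $\tau_5$, the M\"obius factors are $\leq 1$ in modulus, and the single logarithmic factor is $\leq\log P$.
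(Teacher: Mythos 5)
The paper does not prove this lemma at all: it is stated as a citation to Heath-Brown's 1982 paper (``The following Lemma is due to Heath-Brown \cite{HeathBrown82}''), so there is no in-paper proof to compare against. Your sketch does use the correct underlying machinery---Heath-Brown's $K=3$ identity (justified by $P_1\leq v^3/2^{18}\leq z^3$) followed by dyadic decomposition of the at most six variables, which is indeed what Heath-Brown's original argument is built on.

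However, the central combinatorial step, your case (b), is not actually carried out, and the version you describe does not work as stated. The greedy procedure stops the first time the running product reaches $u$; at that moment the product lies in $[u,\,uS_k)$, where $S_k$ is the last dyadic block absorbed. All you know about $S_k$ is $S_k\leq z$, and $uz$ need \emph{not} be $\leq v$: in the very application the paper makes ($u=n^{0.17}$, $v=n^{0.34}$, $z=n^{0.35}$) one has $uz=n^{0.52}\gg v$. So the greedy can overshoot $[u,v]$ entirely, and nothing in your write-up rules this out. The real proof requires a case analysis on how many blocks exceed thresholds like $v/u$ and $v$; the hypotheses $u^2\leq z$, $128uz^2\leq P_1$, and $2^{18}P_1\leq v^3$ are used jointly and nontrivially there (for instance, $\prod S_i<P_1<v^3/2^{18}$ forbids three or more blocks of size $>v$). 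Writing ``the hypotheses are engineered so that the resulting dyadic size of $l$ lies in $[u,v]$'' names the difficulty but does not resolve it---that verification is precisely the content of the lemma, and it is missing from your argument.
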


\begin{Lem}
\label{spmminor}
For any $\alpha\in\m$,
$$
S_{n,z_0}^{\pm}(\alpha)\ll n(\log n)^{6^8-B/4}.
$$
\end{Lem}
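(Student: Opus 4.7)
The plan is to apply Heath-Brown's identity (Lemma~\ref{heathbrown}) to the $\Lambda$-weighted version of $S_{n,z_0}^{\pm}(\alpha)$, reducing it to Type I and Type II bilinear sums of exactly the shape controlled by the previous lemma, and then extract the saving on $\m$ via Dirichlet approximation. Since prime powers $p^k$ with $k\geq 2$ contribute only $O(n^{1/2}\log n)$, I may replace $\log p$ by $\Lambda$ and write
\[
S_{n,z_0}^{\pm}(\alpha)=\sum_{n'\leq n}\Lambda(n')G(n')+O(n^{1/2}\log n),
\]
where
\[
G(n')=\mathbf{1}[n'\equiv b\pmod W]\,e(\alpha(n'-b)/W)\sum_{\substack{d\mid n'+2,\ d\mid P(z_0)\\ d\leq D}}\lambda_D^{\pm}(d).
\]
The critical structural point is to keep the $d$-sum \emph{inside} $G$ as a weight $w_d=\lambda_D^{\pm}(d)\mathbf{1}[d\mid P(z_0)]$, satisfying $|w_d|\leq 1$; extracting it would cost a factor $D=n^{0.32}$, unaffordable in this argument.

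Next I would dyadically split $(n^{0.99},n]$ into $O(\log n)$ intervals $(P,P_1]$ (the tail $n'\leq n^{0.99}$ contributes only $O(n^{0.99})$) and apply Lemma~\ref{heathbrown} to each with
\[
u=\lceil(\log n)^{B/2}\rceil,\qquad v=\lceil 2^{20/3}n^{1/3}\rceil,\qquad z=\lfloor n^{2/5}\rfloor,
\]
which satisfy $u^2\leq z$, $128uz^2\leq P_1$, and $2^{18}P_1\leq v^3$ for $n$ large. Each dyadic piece decomposes into $O((\log n)^6)$ sums, either Type I with $L\geq z$ or Type II with $u\leq L\leq v$. For a Type I piece I take $x=m,\,y=l$ and apply~(\ref{expsum2}) with $A=5$: the weight $v_y\in\{1,\log l\}$ is monotone, and $Y=L\geq z\geq DW(\log n)^B$ holds with room to spare. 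For a Type II piece I take $x=m,\,y=l$ and apply~(\ref{expsum1}) with $A=5$: the choice $v\asymp n^{1/3}$ ensures $X=M\asymp n/L\geq n^{2/3-o(1)}\geq D^2W(\log n)^B$, while $L\geq u\geq(\log n)^{B/2}$ bounds the auxiliary term $XY^{1/2}(\log n)^{2^{12}}\tau^{11}(W)$ by $n(\log n)^{2^{12}-B/4}$, well within the target.

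The minor-arc saving comes from Dirichlet approximation at level $Q=n/(\log n)^B$: for any $\alpha\in\T$ there exist $(a,q)=1$ with $1\leq q\leq Q$ and $|\alpha q-a|\leq(\log n)^B/n$, and $\alpha\in\m$ immediately forces $q>(\log n)^B$ (else $\alpha\in\M_{a,q}$). Consequently $1/q\ll(\log n)^{-B}$; combined with the parameter choices one has $D^2W/X,\,DW/Y\ll(\log n)^{-B}$; and, using that in the application the $W$-trick takes $W$ well below $(\log n)^B$, also $qW/(XY)\leq W/(\log n)^B\ll(\log n)^{-B}$. Each Type I or II piece is therefore $\ll n(\log n)^{6^7-B/4}$, and multiplying by the $O((\log n)^7)$ pieces from the dyadic and Heath-Brown decompositions yields $n(\log n)^{6^8-B/4}$ with slack.

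The principal technical delicacy is the extremely narrow parameter window forced by $D=n^{0.32}$: the Type II hypothesis $X\geq D^2W(\log n)^B=n^{0.64+o(1)}$ leaves only an $n^{0.02}$ exponent of slack above the Heath-Brown lower bound $v\geq(2^{19}n)^{1/3}\gtrsim n^{1/3}$, so $v\asymp n^{1/3}$ barely fits and no larger $D$ would allow the Type II analysis to close. A secondary delicacy is arranging $u\geq(\log n)^{B/2}$ for control of the secondary term in~(\ref{expsum1}) while still respecting $u^2\leq z$; this is easy since $z=n^{2/5}$ is a fixed positive power of $n$.
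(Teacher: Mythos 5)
Your proof is correct and follows essentially the same route as the paper: reduce to a $\Lambda$-weighted sum while keeping the $d$-sum inside the weight $G$, decompose via Heath-Brown's identity into Type I and Type II sums, bound each via (\ref{expsum2}) and (\ref{expsum1}) respectively with $A=5$, and exploit the minor-arc condition $q\geq(\log n)^B$ to get the saving $(\log n)^{-B/4}$. The only difference is the choice of Heath-Brown parameters — the paper takes $u=n^{0.17}$, $v=n^{0.34}$, $z=n^{0.35}$, whereas you take $u=(\log n)^{B/2}$, $v\asymp n^{1/3}$, $z=n^{2/5}$ — but both choices satisfy the lemma's hypotheses and both make the Type I requirement $Y\geq DW(\log n)^B$ and the Type II requirement $X\geq D^2W(\log n)^B$ hold, so the argument closes either way.
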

\begin{proof}
Clearly
$$
S_{n,z_0}^{\pm}(\alpha)=\sum_{\substack{n^{0.99}\leq p\leq n\\
p\equiv b\pmod{W}}}e(\alpha(p-b)/W)\log p\sum_{d\mid
(p+2,P(z_0))}\lambda_D^\pm(d)+O(n^{0.995}).
$$
And notice that for any $x\leq n$,
$$
\bigg|\sum_{d\mid(x+2,P(z_0))}\lambda_D^{\pm}(d)\bigg|\leq\tau(x+2)\ll_\epsilon n^\epsilon.
$$
So it suffices to estimate the sum
\begin{equation}
\label{minorm} \sum_{\substack{n'\leq x\leq n\\ x\equiv
b\pmod{W}}}\Lambda(x)e(\alpha(x-b)/W)\sum_{d\mid(x+2,P(z_0))}\lambda_D^{\pm}(d),
\end{equation}
where $n'\geq n/2$. Since $\alpha\in\m$, there exist $1\leq a\leq q$
with $(a,q)=1$ and $(\log n)^B\leq q\leq n(\log n)^{-B}$ such that
$|\alpha q-a|\leq (\log n)^B/n$. Applying Lemma \ref{heathbrown}
with $u=n^{0.17}$, $v=n^{0.34}$ and $z=n^{0.35}$, the sum
(\ref{minorm}) can be decomposed into $O((\log n)^6)$ type I sums
$$
\sum_{\substack{M<m\leq M_1\\ L<l\leq L_1\\ ml\equiv b\pmod{W}\\ n'\leq ml\leq
n}}a_me(\alpha(ml-b)/W)\sum_{d\mid(ml+2,P(z_0))}\lambda_D^{\pm}(d)
$$
and
$$
\qquad\sum_{\substack{M<m\leq M_1\\
L<l\leq L_1\\ ml\equiv b\pmod{W}\\ n'\leq ml\leq n}}a_m(\log
l)e(\alpha(ml-b)/W)\sum_{d\mid(ml+2,P(z_0))}\lambda_D^{\pm}(d)
$$
with $L\geq n^{0.35}$, and type II sums
$$
\sum_{\substack{M<m\leq M_1\\
L<l\leq L_1\\ ml\equiv b\pmod{W}\\ n'\leq ml\leq
n}}a_mb_le(\alpha(ml-b)/W)\sum_{d\mid(ml+2,P(z_0))}\lambda_D^{\pm}(d)
$$
with $n^{0.17}\leq L\leq n^{0.34}$. Noting that
$\lambda_D^{\pm}(d)=0$ whenever $d\geq D$, in view of
(\ref{expsum1}) with $A=5$, these type II sums are all $\ll n(\log
n)^{2^{13}-B/4}$. And by (\ref{expsum2}), all type I sums are $\ll
n(\log n)^{6^7-B/4}$.

\end{proof}

\section{The Major Arcs}
\setcounter{equation}{0} \setcounter{Thm}{0} \setcounter{Lem}{0}
\setcounter{Cor}{0}

Define
$$
\Delta(x;q):=\max_{\substack{1\leq r\leq q\\
(r,q)=1}}\bigg|\sum_{\substack{p\leq x\\ p\equiv r\pmod{q}}}\log
p-\frac{x}{\phi(q)}\bigg|.
$$
The well-known Bombieri-Vinogradov theorem asserts that for any $A>0$
\begin{equation}
\label{bv} \sum_{q\leq n^{1/2-\epsilon}}\max_{x\leq
n}\Delta(x;q)\ll_{A,\epsilon}\frac{n}{(\log n)^A}.
\end{equation}

Define
$$
\phi_2(q):=q\prod_{\substack{2<p\mid q\\
}}\bigg(1-\frac{2}{p}\bigg).
$$
\begin{Lem}
\label{spmFf}
$$
S_{n,z_0}^+(0)\leq\frac{4e^{-\gamma}k_0\fS_1 n}{\phi_2(W)\log
n}(F(s)+O(e^{-s}(\log n)^{-1/3}))
$$
and
$$
S_{n,z_0}^-(0)\geq\frac{4e^{-\gamma}k_0\fS_1 n}{\phi_2(W)\log
n}(f(s)+O(e^{-s}(\log n)^{-1/3})),
$$
where $\gamma$ is Euler's constant,
$$
\fS_1=\prod_{p>2}\bigg(1-\frac{1}{(p-1)^2}\bigg)=0.6601\ldots
$$
and $s=\log D/\log z_0$.
\end{Lem}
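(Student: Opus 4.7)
The plan is to convert $S_{n,z_0}^{\pm}(0)$ into a sieve sum weighted by Rosser's weights, apply Bombieri--Vinogradov to handle the prime counting, then invoke Lemma \ref{iwaniec} and a Mertens-type product evaluation to recover the constants $\fS_1$, $e^{-\gamma}$, and $\phi_2(W)$.

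First I would interchange summation:
\begin{equation*}
S_{n,z_0}^{\pm}(0)=\sum_{\substack{d\mid P(z_0)\\ d<D}}\lambda_D^{\pm}(d)\sum_{\substack{p\leq n\\ p\equiv b\pmod W\\ p\equiv -2\pmod d}}\log p.
\end{equation*}
The hypothesis $(b(b+2),W)=1$ forces the inner sum to vanish unless $(d,W)=1$, and then CRT combines the two congruences to a single residue class mod $dW$. Since $d<D=n^{0.32}$ and $W\leq(\log n)^B$, we have $dW\leq n^{0.33}\ll n^{1/2-\epsilon}$, so Bombieri--Vinogradov (\ref{bv}) gives
\begin{equation*}
\sum_{\substack{p\leq n\\ p\equiv b\,(W)\\ p\equiv -2\,(d)}}\log p=\frac{n}{\phi(dW)}+E(n,dW),\qquad \sum_{d\leq D}|E(n,dW)|\leq\sum_{q\leq DW}\Delta(n;q)\ll_A\frac{n}{(\log n)^A},
\end{equation*}
using $|\lambda_D^{\pm}(d)|\leq 1$. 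Since $\phi(dW)=\phi(d)\phi(W)$ when $(d,W)=1$, the main term equals
\begin{equation*}
\frac{n}{\phi(W)}\sum_{\substack{d\mid P(z_0)\\ (d,W)=1}}\lambda_D^{\pm}(d)\frac{1}{\phi(d)}+O_A\bigl(n(\log n)^{-A}\bigr).
\end{equation*}

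Next I would apply Lemma \ref{iwaniec} with $\P_*=\{p\text{ prime}:p\nmid W\}$ and $\omega(p)=p/(p-1)$ for $p\in\P_*$, $\omega(p)=0$ otherwise. Since $W$ is even, every $p\in\P_*$ satisfies $p\geq 3$ so $0<\omega(p)<p$; moreover $\omega(d)/d=1/\phi(d)$ for squarefree $d$ built from $\P_*$, which is exactly what we need. The hypothesis
\begin{equation*}
\prod_{\substack{z_1\leq p<z_2\\ p\nmid W}}\left(1-\frac{1}{p-1}\right)^{-1}\leq\frac{\log z_2}{\log z_1}\left(1+\frac{L}{\log z_1}\right)
\end{equation*}
holds for an absolute $L>0$ by standard Mertens estimates. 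Thus
\begin{equation*}
\sum_{\substack{d\mid P(z_0)\\ (d,W)=1}}\lambda_D^{\pm}(d)\frac{1}{\phi(d)}\ \substack{\leq\\ \geq}\ \prod_{\substack{p<z_0\\ p\nmid W}}\left(1-\frac{1}{p-1}\right)\Bigl(\{F,f\}(s)+O(e^{-s}(\log D)^{-1/3})\Bigr).
\end{equation*}

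Finally I would evaluate the product. Using the factorization
\begin{equation*}
\frac{p-2}{p-1}=\left(1-\frac{1}{(p-1)^2}\right)\left(1-\frac{1}{p}\right),
\end{equation*}
together with $\fS_1=\prod_{p>2}(1-(p-1)^{-2})$ and Mertens' theorem $\prod_{2<p<z_0}(1-1/p)\sim 2e^{-\gamma}/\log z_0$, we get $\prod_{2<p<z_0}(p-2)/(p-1)\sim 2e^{-\gamma}\fS_1/\log z_0$. Removing the $p\mid W$ factors through the identity $\prod_{2<p\mid W}(p-1)/(p-2)=2\phi(W)/\phi_2(W)$ yields
\begin{equation*}
\prod_{\substack{p<z_0\\ p\nmid W}}\left(1-\frac{1}{p-1}\right)\sim\frac{4e^{-\gamma}\fS_1\phi(W)}{\phi_2(W)\log z_0}.
\end{equation*}
Combining with $\log z_0=\log n/k_0$ and absorbing the $\phi(W)$ into the prefactor $n/\phi(W)$ produces the stated main term $4e^{-\gamma}k_0\fS_1 n/(\phi_2(W)\log n)$ times $F(s)$ or $f(s)$. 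The main obstacle is the bookkeeping of Step~3: making sure the correction factors at $p=2$ and at primes $p\mid W$ fit together into exactly $\phi_2(W)$ and produce the constant $4e^{-\gamma}\fS_1$ rather than $2e^{-\gamma}\fS_1$ or some other variant; everything else is routine application of the Bombieri--Vinogradov inequality and Lemma \ref{iwaniec}.
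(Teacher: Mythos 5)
Your proposal is correct and follows essentially the same route as the paper: interchange summation to isolate $(d,W)=1$, apply Bombieri--Vinogradov over the moduli $dW\leq DW\ll n^{1/2-\epsilon}$, invoke Lemma \ref{iwaniec} with $\omega(p)=p/(p-1)$ on the primes not dividing $W$, and close with a Mertens computation that produces the $4e^{-\gamma}\fS_1/\phi_2(W)$ constant (your factorization of $(p-2)/(p-1)$ is just a slightly different packaging of the paper's direct product manipulation). The only bookkeeping to keep an eye on, which you correctly flag, is that all prime divisors of $W=P(w)$ lie below $z_0$ and that the Mertens constant $L$ in Lemma \ref{iwaniec} is absolute (it is, since dropping the $p\mid W$ Euler factors only decreases the product), so the $e^{\sqrt{L}}$ in the error term is uniform in $W$.
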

\begin{proof}
\begin{align*}
S_{n,z_0}^+(0)
=&\sum_{\substack{d\mid P(z_0)\\ b\equiv -2\pmod{(d,W)}}}\lambda_D^+(d)\sum_{\substack{p\leq n\\ p\equiv b\pmod{W}\\
p\equiv -2\pmod{d}}}\log p\\
=&\sum_{\substack{d\mid P(z_0)\\
(d,W)=1}}\lambda_D^+(d)\bigg(\frac{n}{\phi(Wd)}+O(\Delta(n;Wd))\bigg)
\end{align*}
since $(W,b+2)=1$. Since $\lambda_D^+(d)$ vanishes for $d\geq D$, by
(\ref{bv}) we have,
\begin{align*}
S_{n,z_0}^+(0) =\frac{n}{\phi(W)}\sum_{\substack{d\mid P(z_0)\\
(d,W)=1}}\frac{\lambda_D^+(d)}{\phi(d)}+O\bigg(\frac{n}{(\log
n)^{5B}}\bigg).
\end{align*}
Applying (\ref{rosserF}),
\begin{align*}
\sum_{\substack{d\mid P(z_0)\\
(d,W)=1}}\frac{\lambda_D^+(d)}{\phi(d)} \leq\prod_{p\leq z_0,\
p\nmid W}\bigg(1-\frac{1}{p-1}\bigg)(F(s)+O(e^{-s}(\log D)^{-1/3})).
\end{align*}
Similarly,
\begin{align*}
S_{n,z_0}^-(0) \geq\frac{n}{\phi(W)}\prod_{p\leq z_0,\ p\nmid
W}\bigg(1-\frac{1}{p-1}\bigg)(f(s)+O(e^{-s}(\log n)^{-1/3})).
\end{align*}
Finally, by the Mertens theorem,
\begin{align*}
\prod_{p\leq z_0,\ p\nmid W}\bigg(1-\frac{1}{p-1}\bigg)^{-1}=
\frac{\phi_2(W)}{2\phi(W)}\prod_{2<p\leq
z_0}\bigg(1-\frac{1}{p-1}\bigg)^{-1}=&
\frac{\phi_2(W)}{4\phi(W)}(\fS_1^{-1}e^{\gamma}\log{z_0}+O(1)).
\end{align*}
\end{proof}

Let $m=(n-b)/W$. Define $\Lambda_*(x)=\log x$ or $0$ according to whether $x$ is prime.
\begin{Lem}
\label{saq} Suppose that $1\leq a\leq q\leq (\log n)^B$ and
$(a,q)=1$. Then we have
\begin{align}
\bigg|S_{n,z_0}(a/q)-\frac{\1_{(W,q)=1}\mu(q)\tau^*(a,q)4e^{-\gamma}k_0\fS_1Wm}{\phi_2(Wq)\log(Wm+b)}\bigg|
\leq\frac{5e^{-\gamma}k_0\fS_1(F(s)-f(s))Wm}{\phi_2(W)\log(Wm+b)},
\end{align}
where $\1_{(W,q)=1}=1$ or $0$ according to whether $(W,q)=1$,
$$
\tau^*(a,q)=\sum_{\substack{d\mid q\\ (d,q/d)=1}}e(ar_d/q),
$$
and $1\leq r_d\leq q$ is the unique integer $r$ such that $Wr\equiv
-b\pmod{d}$ and $Wr\equiv -b-2\pmod{q/d}$.
\end{Lem}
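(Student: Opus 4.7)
The plan is to bracket $S_{n,z_0}(a/q)$ between the Rosser-weighted analogues $S_{n,z_0}^{\pm}(a/q)$ via Lemma~2.2, and then evaluate each by Bombieri--Vinogradov followed by Iwaniec's sieve bounds. Lemma~2.2 gives
\[
\bigl|S_{n,z_0}(a/q)-S_{n,z_0}^{\pm}(a/q)\bigr|\leq S_{n,z_0}^{+}(0)-S_{n,z_0}^{-}(0),
\]
and by Lemma~3.1 the right-hand side is at most $4e^{-\gamma}k_0\fS_1(F(s)-f(s))n/(\phi_2(W)\log n)\cdot(1+o(1))$. Hence it suffices to show that $S_{n,z_0}^{\pm}(a/q)$ differs from the target main term $M:=\mathbf{1}_{(W,q)=1}\mu(q)\tau^{\ast}(a,q)\cdot 4e^{-\gamma}k_0\fS_1 Wm/(\phi_2(Wq)\log(Wm+b))$ by an error of the same order.

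Interchanging summation,
\[
S_{n,z_0}^{\pm}(a/q)=\sum_{\substack{d\mid P(z_0)\\ (d,W)=1}}\lambda_D^{\pm}(d)\sum_{\substack{p\leq n\\ p\equiv b\pmod W\\ p\equiv -2\pmod d}}e(a(p-b)/(qW))\log p.
\]
The inner sum pins $p$ to residues modulo $M_d:=Wqd/(q,d)$; since $M_d\leq qWD\leq (\log n)^{2B}n^{0.32}\ll n^{0.4}$ lies safely in the Bombieri--Vinogradov range, each residue class contributes $n/\phi(M_d)+O(\Delta(n;M_d))$, and the total BV error after summation over $d$ is $\ll\phi(q)\sum_{M\leq qWD}\Delta(n;M)\ll n/(\log n)^{A-B}$, negligible for large $A$. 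Using $(W,q)=1$, the exponential $e(a(p-b)/(qW))$ reduces by CRT to $e(aW^{-1}(\tilde r-b)/q)$ with $\tilde r=r\bmod q$, and a Ramanujan-sum evaluation, splitting $q=d_1\cdot(q/d_1)$ with $d_1=(d,q)$, produces the character-sum factor $\mu(q/d_1)\,e(ar_{q/d_1}/q)$, which precisely matches the definition of $\tau^{\ast}(a,q)$. When $(W,q)>1$ or $q$ is non-squarefree, the relevant Ramanujan sum vanishes, recovering the indicator $\mathbf{1}_{(W,q)=1}\mu(q)$.

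Writing $d=d_1d_2$ with $d_1\mid q$ and $(d_2,q)=1$, the surviving main term collapses to
\[
\frac{n}{\phi(W)\phi(q)}\sum_{\substack{d_1\mid q\\ (d_1,W)=1}}\mu(q/d_1)\,e(ar_{q/d_1}/q)\,\Sigma^{\pm}(d_1),
\]
with $\Sigma^{\pm}(d_1):=\sum_{d_2\mid P(z_0)/d_1,\,(d_2,qW)=1}\lambda_D^{\pm}(d_1d_2)/\phi(d_2)$. Applying Lemma~2.1 with $\omega(p)=p/(p-1)$ supported on $p\nmid qW$, and exploiting that $d_1\leq q\leq (\log n)^B$ is negligible compared with $D=n^{0.32}$ and $z_0=n^{1/k_0}$, one shows
\[
\Sigma^{\pm}(d_1)=\mu(d_1)\prod_{\substack{p<z_0\\ p\nmid qW}}\!\Bigl(1-\tfrac{1}{p-1}\Bigr)\bigl(F(s)\text{ or }f(s)\bigr)+O\bigl((\log n)^{-1/3}\bigr).
\]
The identity $\mu(d_1)\mu(q/d_1)=\mu(q)$ (for squarefree $q$) collapses the $d_1$-sum into $\mu(q)\tau^{\ast}(a,q)$ times the Euler product, and the Mertens-type calculation from the proof of Lemma~3.1, together with $\phi_2(Wq)=\phi_2(W)\prod_{p\mid q}(p-2)$ (valid for $W$ even, $q$ odd, $(W,q)=1$), converts that product into the factor $4e^{-\gamma}k_0\fS_1/(\phi_2(Wq)\log(Wm+b))$, matching $M$.

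The main technical obstacle is the claim that $\Sigma^{\pm}(d_1)=\mu(d_1)\cdot(\text{sieve factor})$ uniformly in $d_1\mid q$. Since $\lambda_D^{\pm}$ is not multiplicative, one cannot split the sieve weight by the factorisation $d=d_1d_2$ directly; instead, because the primes of $d_1$ are bounded by $(\log n)^B\ll D^{\epsilon}$, they occupy the tail of the decreasing prime list used in the Rosser construction, and a short combinatorial argument shows that the additional Rosser constraints involving these small primes are essentially vacuous except on a thin range of $d_2$ near $D/d_1^c$, whose contribution is absorbed into the overall $F(s)-f(s)$ discrepancy. Combining this with the sandwich error from Lemma~2.2 gives the final bound with constant $5$, providing the small slack beyond the natural constant $4$.
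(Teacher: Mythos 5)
Your route is genuinely different from the paper's, and unfortunately the difference is exactly where the difficulty lies. The paper does \emph{not} compute $S_{n,z_0}^{\pm}(a/q)$ directly. Instead it splits $S_{n,z_0}(a/q)$ over residues $r$ modulo $q$ \emph{before} any sieve weights appear:
\[
S_{n,z_0}(a/q)=\sum_{\substack{1\leq r\leq q\\ ((Wr+b)(Wr+b+2),q)=1}}e(ar/q)\!\!\sum_{\substack{p\leq Wm+b\\ p\equiv Wr+b\ (\mathrm{mod}\ Wq)\\ (p+2,P(z_0))=1}}\!\!\log p+O(\log^{B+1}n),
\]
then sandwiches the \emph{inner} sum for each fixed $r$ between its $\lambda_D^{\pm}$-weighted versions, and applies Bombieri--Vinogradov and Iwaniec's bounds with modulus $Wq$ rather than $W$. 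Because $(p+2,Wq)=1$ is automatic for each admissible class $r$, the support condition $(d,Wq)=1$ is for free, and Lemma~2.1 is applied with $\P_*=\{p<z_0:p\nmid Wq\}$; no splitting of $\lambda_D^{\pm}(d)$ into $d_1d_2$ is ever needed. The residue count $\lvert\{r:((Wr+b)(Wr+b+2),q)=1\}\rvert=\phi_2(Wq)/\phi_2(W)$ then converts the per-class error (of size $\asymp(F(s)-f(s))Wm/(\phi_2(Wq)\log n)$) into the stated bound, and the Ramanujan-type sum over $r$ yields the factor $\1_{(W,q)=1}\mu(q)\tau^*(a,q)$.

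Your version first brackets $S_{n,z_0}(a/q)$ by $S_{n,z_0}^{\pm}(a/q)$ and then tries to evaluate $S_{n,z_0}^{\pm}(a/q)$ by pulling the $d$-sum outside and factoring $d=d_1d_2$ with $d_1=(d,q)$. This forces you to deal with $\Sigma^{\pm}(d_1)=\sum_{d_2}\lambda_D^{\pm}(d_1d_2)/\phi(d_2)$, and you correctly observe that $\lambda_D^{\pm}$ is not multiplicative. Your sketch that $\lambda_D^{\pm}(d_1d_2)=\mu(d_1)\lambda_D^{\pm}(d_2)$ whenever $d_2\leq D(\log n)^{-3B}$ is plausible, but the claimed error $\Sigma^{\pm}(d_1)=\mu(d_1)\cdot(\text{sieve product})\cdot(F\text{ or }f)+O((\log n)^{-1/3})$ is not justified and, as stated, cannot be right: the main term here is of size $\asymp\prod_{p<z_0,\,p\nmid qW}(1-\tfrac{1}{p-1})\asymp k_0/\log n$ (up to bounded factors involving $\phi(qW)/\phi_2(qW)$), so an additive error $(\log n)^{-1/3}$ would swamp it entirely. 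What is needed is that the truncated tail $\sum_{D(\log n)^{-3B}<d_2<D}\lambda_D^{\pm}(d_1d_2)/\phi(d_2)$ is \emph{relatively} small compared with the main term, i.e.\ $O\bigl((F(s)-f(s))\prod(1-\tfrac{1}{p-1})\bigr)$, and this is not at all a routine consequence of Lemma~2.1 — the truncated Rosser weight $\lambda_D^{\pm}$ restricted to $d<D'$ is not $\lambda_{D'}^{\pm}$, so neither (\ref{rosserF}) nor (\ref{rosserf}) applies directly. You would also have to handle the case where $d_1$'s primes interleave with those of $d_2$ (which can happen since both are only constrained to lie in $[w,z_0)$), which complicates the ``tail of the decreasing prime list'' picture. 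This is a genuine gap, not a cosmetic one, and it is precisely the work the paper's re-ordering of the argument is designed to avoid.

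A smaller point: the per-class estimate you are implicitly invoking from Lemma~3.1 should carry the factor $4$ in its main term (the paper's displayed constant $1$ in the proof of Lemma~3.2 appears to be a typo; the lemma's final constant $5$ is consistent with a per-class main term $4e^{-\gamma}k_0\fS_1 Wm/(\phi_2(Wq)\log(Wm+b))$ and fluctuation $\leq 4.4\cdot(\ldots)(F(s)-f(s))$), so if you do pursue your route you should track that bookkeeping carefully.
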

\begin{proof} Clearly
\begin{align*}
&\sum_{\substack{1\leq x\leq m\\ (Wx+b+2,P(z_0))=1}}\Lambda_*(Wx+b)e(ax/q)
=&\sum_{\substack{1\leq r\leq q\\ (Wr+b,q)=1\\ (Wr+b+2,q)=1}}e(ar/q)\sum_{\substack{1\leq p\leq Wm+b\\
p\equiv Wr+b\pmod{Wq}\\ (p+2,P(z_0))=1}}\log p+O(\log^{B+1}n).
\end{align*}
Notice that
\begin{align*}
&\sum_{\substack{1\leq p\leq Wm+b\\
p\equiv Wr+b\pmod{Wq}}}\log p\sum_{d\mid(p+2,P(z_0))}\lambda_D^-(d)\\ \leq&\sum_{\substack{1\leq p\leq Wm+b\\
p\equiv Wr+b\pmod{Wq}}}\log p\sum_{d\mid (p+2,P(z_0))}\mu(d)\\\leq&\sum_{\substack{1\leq p\leq Wm+b\\
p\equiv Wr+b\pmod{Wq}}}\log p\sum_{d\mid(p+2,P(z_0))}\lambda_D^+(d).
\end{align*}
From the proof of Lemma \ref{spmFf}, we know that
\begin{align*}
&\bigg|\sum_{\substack{1\leq p\leq Wm+b\\
p\equiv Wr+b\pmod{Wq}\\ (p+2,P(z_0))=1}}\log p-
\frac{e^{-\gamma}k_0\fS_1Wm}{\phi_2(Wq)\log(Wm+b)}\bigg|\\
\leq&\frac{1.1e^{-\gamma}k_0\fS_1(F(s)-f(s))Wm}{\phi_2(Wq)\log(Wm+b)}.
\end{align*}
By noting that $W$ is even and $(W,b(b+2))=1$,
\begin{align*}
\sum_{\substack{1\leq r\leq q\\ (Wr+b,q)=1\\ (Wr+b+2,q)=1}}e(ar/q)
=&\sum_{d_1,d_2\mid q}\mu(d_1)\mu(d_2)\sum_{\substack{1\leq r\leq q\\ d_1\mid Wr+b\\ d_2\mid Wr+b+2}}e(ar/q)\\
=&\sum_{\substack{d_1d_2=q\\ (d_1,d_2)=1\\ (d_1,W)=(d_2,W)=1}}\mu(d_1)\mu(d_2)e(ar_{d_1}/q)\\
=&\begin{cases}
\mu(q)\tau^*(a,q)&\text{if }(W,q)=1,\\
0&\text{otherwise}.
\end{cases}
\end{align*}
Furthermore, we have
$$
|\{1\leq r\leq q:\, ((Wr+b)(Wr+b+2),q)=1\}|=q\prod_{\substack{p\mid q,\ p\nmid W}}\bigg(1-\frac{2}{p}\bigg)=\frac{\phi_2(Wq)}{\phi_2(W)}.
$$
All are done.
\end{proof}

\begin{Lem}
\label{spmalpha}
Suppose that $1\leq a\leq q\leq (\log n)^B$ and $(a,q)=1$. Then for any
$\alpha\in\M_{a,q}$,
\begin{align}
S_{n,z_0}^\pm(\alpha)=\frac{S_{n,z_0}^\pm(a/q)}{m}\sum_{1\leq y\leq
m}e(\theta y)+ O\bigg(\frac{m}{(\log m)^{3B}}\bigg),
\end{align}
where $\theta=\alpha-a/q$.
\end{Lem}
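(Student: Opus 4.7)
The plan is to perform the standard major-arc factorization for the sieved exponential sum. Write $\alpha=a/q+\theta$ with $|\theta|\leq(\log n)^B/(qn)$, and parametrize primes by $x=(p-b)/W$ (so $1\leq x\leq m$) to rewrite
$$
S_{n,z_0}^{\pm}(\alpha)=\sum_{1\leq x\leq m}a_x\,e(ax/q)\,e(\theta x),
$$
where $a_x=\1[Wx+b\in\P]\log(Wx+b)\sum_{d\mid(Wx+b+2,P(z_0))}\lambda_D^{\pm}(d)$. Introduce the partial sum $A(t)=\sum_{1\leq x\leq t}a_x\,e(ax/q)$, so that $A(m)=S_{n,z_0}^{\pm}(a/q)$. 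Abel summation applied separately to both $S_{n,z_0}^{\pm}(\alpha)$ and to the elementary sum $\sum_{y=1}^m e(\theta y)$ yields the identity
$$
S_{n,z_0}^{\pm}(\alpha)-\frac{A(m)}{m}\sum_{y=1}^m e(\theta y)=\sum_{x=1}^{m-1}\bigg(A(x)-\frac{x}{m}A(m)\bigg)\bigl(e(\theta x)-e(\theta(x+1))\bigr).
$$
Since $|e(\theta(x+1))-e(\theta x)|\ll|\theta|$, the lemma will follow from the uniform bound $\max_{1\leq x\leq m}|A(x)-(x/m)A(m)|\ll m/(\log n)^{5B}$.

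To establish this bound, I would open $A(x)$ exactly as in the proof of Lemma \ref{saq}: decomposing over the sieve divisor $d\mid P(z_0)$ with $(d,W)=1$ and the residue $r\pmod q$, the innermost sum collapses (via the Chinese remainder theorem, using $[q,d]$ in place of $qd$ when $(q,d)>1$) to counting primes $p\leq Wx+b$ in a single arithmetic progression modulo $W[q,d]$. The Bombieri-Vinogradov bound \eqref{bv} applies because $W[q,d]\leq WqD\leq(\log n)^{2B}\cdot n^{0.32}\leq n^{1/2-\epsilon}$. Grouping moduli (each value $W[q,d]$ arises from $O(\tau(q))$ choices of $d$) and invoking \eqref{bv} with $A$ large, one obtains
$$
|E(x)|\ll q\tau(q)\sum_{d'\leq WqD}\max_{y\leq n}\Delta(y;d')\ll\frac{n}{(\log n)^{6B}}\ll\frac{m}{(\log n)^{5B}}.
$$
The main terms, summed over $d$ and $r$, factor as $(Wx+b)\rho$ with a single constant $\rho$ independent of $x$, so $A(x)=(Wx+b)\rho+E(x)$, and subtracting the analogous formula for $A(m)$ gives
$$
A(x)-\frac{x}{m}A(m)=b\rho\bigg(1-\frac{x}{m}\bigg)+E(x)-\frac{x}{m}E(m)\ll\frac{m}{(\log n)^{5B}},
$$
where the residual $b\rho(1-x/m)\ll W|\rho|\ll(\log n)^{O(1)}$ is dwarfed by the $E$-terms for large $n$.

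Plugging this back into the Abel identity, the total error is
$$
\ll|\theta|\cdot m\cdot\frac{m}{(\log n)^{5B}}\ll\frac{m^2(\log n)^B}{qn(\log n)^{5B}}\ll\frac{m}{(\log n)^{4B}}=O\bigg(\frac{m}{(\log m)^{3B}}\bigg),
$$
using $m\leq n$ and $\log m\asymp\log n$. The principal obstacle is the Bombieri-Vinogradov step: one needs the sum over all moduli $W[q,d]$ up to $WqD$ to enjoy arbitrarily large power-of-log savings, which forces the truncation $D=n^{0.32}\ll n^{1/2-\epsilon}$. A minor bookkeeping point is the case $(d,q)>1$, handled by replacing $qd$ with $[q,d]$; this does not affect orders of magnitude.
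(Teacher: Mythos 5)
Your proposal is correct and follows essentially the same route as the paper: partial (Abel) summation, then opening up the sieve weight and residue class and invoking Bombieri--Vinogradov with moduli $W[d,q]\leq WqD\ll n^{1/2-\epsilon}$ to show each $A(y)$ is $(Wy+b)\rho+O(n/(\log n)^{5B})$ with slope $\rho$ independent of $y$. The only cosmetic difference is organizational — you form $A(x)-(x/m)A(m)$ explicitly and use $A(m)=S^\pm_{n,z_0}(a/q)$ to kill the slope, whereas the paper first writes $S^\pm(\alpha)=W\rho\sum_{y\leq m}e(\theta y)+O(\cdot)$ and then identifies $W\rho$ with $S^\pm(a/q)/m$ by ``setting $\theta\to 0$''; these are the same step in different clothes.
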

\begin{proof}
By the partial summation,
\begin{align*}
&S_{n,z_0}^\pm(\alpha)\\
=&e(\theta m)\sum_{\substack{1\leq x\leq m}}\Lambda_*(Wx+b)e(ax/q)\sum_{d\mid (Wx+b+2,P(z_0))}\lambda_D^{\pm}(d)\\
&-\sum_{y\leq m-1}(e(\theta(y+1))-e(\theta y))\sum_{\substack{1\leq
x\leq y}}\Lambda_*(Wx+b)e(ax/q)\sum_{d\mid
(Wx+b+2,P(z_0))}\lambda_D^{\pm}(d).
\end{align*}
Recalling that $(W,b+2)=1$, write
\begin{align*}
&\sum_{\substack{1\leq x\leq y}}\Lambda_*(Wx+b)e(ax/q)\sum_{d\mid (Wx+b+2,P(z_0))}\lambda_D^{\pm}(d)\\
=&\sum_{\substack{d\mid P(z_0)\\ (d,W)=1\\ d\leq D}}\lambda_D^\pm(d)\sum_{\substack{1\leq x\leq y\\
Wx\equiv-b-2\pmod{d}}}\Lambda_*(Wx+b)e(ax/q)\\
=&\sum_{\substack{d\mid P(z_0)\\ (d,W)=1\\ d\leq D}}\lambda_D^\pm(d)\bigg(\sum_{\substack{1\leq r\leq q\\ (Wr+b,q)=1\\ Wr\equiv-b-2\pmod{(d,q)}}}e(ar/q)\sum_{\substack{1\leq x\leq y\\
x\equiv r\pmod{q}\\
Wx\equiv-b-2\pmod{d}}}\Lambda_*(Wx+b)+O(\log^{B+1}n)\bigg).
\end{align*}
Now
\begin{align*}
&\sum_{\substack{1\leq x\leq y\\
x\equiv r\pmod{q}\\
Wx\equiv-b-2\pmod{d}}}\Lambda_*(Wx+b)=\frac{Wy+b}{\phi(W[d,q])}+O(\Delta(Wy+b;W[d,q])).
\end{align*}
Notice that for any $d'$ with $q\mid d'$,
$|\{d:\,[d,q]=d'\}|\leq\tau(q)$. Hence
\begin{align*}
&\sum_{\substack{1\leq x\leq y}}\Lambda_*(Wx+b)e(ax/q)\sum_{d\mid (Wx+b+2,P(z_0))}\lambda_D^{\pm}(d)\\
=&\sum_{\substack{d\mid P(z_0)\\ (d,W)=1\\ d\leq
D}}\lambda_D^\pm(d)\sum_{\substack{1\leq r\leq q\\ (Wr+b,q)=1\\
Wr\equiv-b-2\pmod{(d,q)}}}e(ar/q)\frac{Wy+b}{\phi(W[d,q])}
+O\bigg(\frac{Wy+b}{(\log(Wy+b))^{5B}}\bigg).
\end{align*}
So
\begin{align*}
S_{n,z_0}^\pm(\alpha) =W\sum_{1\leq y\leq m}e(\theta
y)\sum_{\substack{d\mid P(z_0)\\ (d,W)=1\\ d\leq
D}}\lambda_D^\pm(d)\sum_{\substack{1\leq r\leq q\\ (Wr+b,q)=1\\
Wr\equiv-b-2\pmod{(d,q)}}}\frac{e(ar/q)}{\phi(W[d,q])}
+O\bigg(\frac{m}{(\log m)^{3B}}\bigg).
\end{align*}
Setting $\theta\rightarrow0$ in the above equation, we obtain that
\begin{align*}
W\sum_{\substack{d\mid P(z_0)\\ (d,W)=1\\ d\leq
D}}\lambda_D^\pm(d)\sum_{\substack{1\leq r\leq q\\ (Wr+b,q)=1\\
Wr\equiv-b-2\pmod{(d,q)}}}\frac{e(ar/q)}{\phi(W[d,q])}
=\frac{S_{n,z_0}^\pm(a/q)}{m} +O\bigg(\frac{1}{(\log m)^{3B}}\bigg).
\end{align*}
\end{proof}
Combining Lemmas \ref{spm}, \ref{spmminor}, \ref{spmFf}, \ref{saq}
and \ref{spmalpha}, we get

\begin{Lem}
\label{sumexp}
Suppose that $1\leq a\leq q\leq (\log n)^B$ and $(a,q)=1$. Then for any
$\alpha\in\M_{a,q}$,
\begin{align}
\bigg|S_{n,z_0}(\alpha)-\frac{\1_{(W,q)=1}\mu(q)\tau^*(a,q)4e^{-\gamma}k_0\fS_1W}{\phi_2(Wq)\log
n}\sum_{1\leq y\leq m}e(\theta y)\bigg|\leq
\frac{15e^{-\gamma}k_0\fS_1(F(s)-f(s))n}{\phi_2(W)\log n},
\end{align}
where $\theta=\alpha-a/q$. Furthermore, for any $\alpha\in\m$,
\begin{align}
|S_{n,z_0}(\alpha)|\leq
\frac{5e^{-\gamma}k_0\fS_1(F(s)-f(s))n}{\phi_2(W)\log n}.
\end{align}
\end{Lem}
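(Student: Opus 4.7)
The proof threads together the five preceding lemmas and is essentially a bookkeeping exercise. The cornerstone is that Lemma \ref{spm} lets us replace $S_{n,z_0}(\alpha)$ by either $S_{n,z_0}^{+}(\alpha)$ or $S_{n,z_0}^{-}(\alpha)$ at the price of the sieve gap $S_{n,z_0}^{+}(0)-S_{n,z_0}^{-}(0)$, and Lemma \ref{spmFf} pins that gap down by
$$
S_{n,z_0}^{+}(0)-S_{n,z_0}^{-}(0)\leq\frac{4e^{-\gamma}k_0\fS_1 n(F(s)-f(s))}{\phi_2(W)\log n}+O\!\left(\frac{n}{\phi_2(W)(\log n)^{4/3}}\right).
$$
Since $F(s)-f(s)$ is a positive constant for the fixed value $s=\log D/\log z_0$, the secondary error is absorbed into any multiple of $F(s)-f(s)$ once $n$ is large.

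For the major arc estimate, the plan is to chain four triangle-inequality steps. First, apply Lemma \ref{spm} to get $|S_{n,z_0}(\alpha)-S_{n,z_0}^{\pm}(\alpha)|\leq S_{n,z_0}^{+}(0)-S_{n,z_0}^{-}(0)$. Second, invoke Lemma \ref{spmalpha} to rewrite $S_{n,z_0}^{\pm}(\alpha)=m^{-1}S_{n,z_0}^{\pm}(a/q)\sum_{1\leq y\leq m}e(\theta y)+O(m/(\log m)^{3B})$. Third, apply Lemma \ref{spm} once more at the single point $\alpha=a/q$ and use the trivial bound $|\sum_{1\leq y\leq m}e(\theta y)|\leq m$ to replace $S_{n,z_0}^{\pm}(a/q)$ by $S_{n,z_0}(a/q)$ at the cost of one further sieve gap. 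Fourth, Lemma \ref{saq} substitutes the explicit main term for $S_{n,z_0}(a/q)$ with error $\frac{5e^{-\gamma}k_0\fS_1(F(s)-f(s))Wm}{\phi_2(W)\log n}$. Using $Wm\leq n$ and summing the four contributions gives a total loss of at most $\bigl(2\cdot 4+5+o(1)\bigr)\cdot\frac{e^{-\gamma}k_0\fS_1(F(s)-f(s))n}{\phi_2(W)\log n}$, comfortably beneath the claimed $15$.

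For the minor arc bound, Lemma \ref{spm} immediately gives $|S_{n,z_0}(\alpha)|\leq|S_{n,z_0}^{+}(\alpha)|+(S_{n,z_0}^{+}(0)-S_{n,z_0}^{-}(0))$, and Lemma \ref{spmminor} yields $|S_{n,z_0}^{+}(\alpha)|\ll n(\log n)^{6^8-B/4}$. Since $B=6^9$, the latter is much smaller than $n/(\log n)^{B+1}$ and hence negligible against $\frac{e^{-\gamma}k_0\fS_1 n}{\phi_2(W)\log n}$; the sieve gap therefore dominates and fits under $(4+o(1))\leq 5$ times that quantity. The only obstacle is notational rather than mathematical: one must verify that the built-in cushion in the constants $15$ and $5$ is wide enough to absorb the $O(e^{-s}(\log n)^{-1/3})$ errors of Lemma \ref{spmFf}, the $O(m/(\log m)^{3B})$ of Lemma \ref{spmalpha}, and the power-saving minor arc bound; since $F(s)-f(s)>0$ for our fixed $s$, no non-trivial cancellation is required.
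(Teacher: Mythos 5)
Your chaining of Lemmas \ref{spm}, \ref{spmFf}, \ref{saq} and \ref{spmalpha} is exactly what the paper intends when it says the lemma follows by "combining Lemmas \ref{spm}, \ref{spmminor}, \ref{spmFf}, \ref{saq} and \ref{spmalpha}", and your accounting for the major-arc term (two sieve gaps worth $4(F(s)-f(s))$ each, plus the $5(F(s)-f(s))$ coming from Lemma \ref{saq}, hence $13<15$ modulo lower-order errors) is correct.

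There is, however, a numerical slip in the minor-arc justification. You assert that $n(\log n)^{6^8-B/4}$ is "much smaller than $n/(\log n)^{B+1}$", but with $B=6^9$ one has $6^8-B/4 = 6^8(1-3/2)=-6^8/2\approx -8.4\times 10^5$, while $-(B+1)\approx -1.0\times 10^7$; so $n(\log n)^{6^8-B/4}$ is in fact \emph{larger} than $n/(\log n)^{B+1}$, not smaller. Under only the ambient hypothesis $W\leq(\log n)^B$ the minor-arc estimate $|S_{n,z_0}^{\pm}(\alpha)|\ll n(\log n)^{-6^8/2}$ is therefore not automatically dominated by the target quantity $\frac{e^{-\gamma}k_0\fS_1(F(s)-f(s))n}{\phi_2(W)\log n}$, whose worst-case lower bound is only of size $n/(\log n)^{B+1}$: if $\phi_2(W)$ were as large as, say, $(\log n)^{6^8/2}$ the "Furthermore" inequality would actually fail. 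What saves the argument is that in the application (Section 4) one takes $W=P(w)\leq\log n$, so $\phi_2(W)\leq\log n$ and $\frac{n}{\phi_2(W)\log n}\geq n/(\log n)^2$, which does overwhelm $n(\log n)^{-6^8/2}$ by an enormous margin. The skeleton of your proof is thus the right one, but the step declaring the Type I/II minor-arc contribution negligible needs to invoke the smallness of $W$ (equivalently, an upper bound $\phi_2(W)\leq(\log n)^{6^8/2-1-o(1)}$), not a false comparison of the exponent $6^8-B/4$ with $-(B+1)$.
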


\begin{Lem}
\label{primediff}
\begin{align}
\sum_{\substack{p_1,p_2\leq n\\
(p_i+2,P(z_0))=1\\ p_i\equiv b\pmod{W}\\ p_2-p_1=WM}}1
\ll\frac{k_0^2nW}{\phi_2(W)^2\log^4 n}\prod_{\substack{p\mid M\\
p\nmid W}}\bigg(1+\frac{2}{p}\bigg)\prod_{\substack{p\mid
(WM+2)(WM-2)\\ p\nmid W}}\bigg(1+\frac{1}{p}\bigg).
\end{align}
\end{Lem}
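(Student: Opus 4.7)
With the substitution $p_1=Wx+b$, $p_2=Wx+b+WM$, the sum counts the number of $x\in[1,(n-b)/W]$ for which the four linear forms
\begin{align*}
L_1(x)&=Wx+b, & L_2(x)&=Wx+b+2,\\
L_3(x)&=Wx+b+WM, & L_4(x)&=Wx+b+WM+2
\end{align*}
satisfy ``$L_1,L_3$ are prime'' and ``$(L_2L_4,P(z_0))=1$.'' The strategy is to apply a hybrid upper-bound sieve to this four-form system and read off the asserted bound from an Euler-product computation.

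\textbf{Step 1 (local densities).} Set $\omega(p)=\#\{y\bmod p:p\mid L_1(y)L_2(y)L_3(y)L_4(y)\}$. The hypothesis $(W,b(b+2))=1$ forces $\omega(p)=0$ for $p\mid W$; for $p\nmid W$ the residues $-b/W$, $-(b+2)/W$, $-(b+WM)/W$, $-(b+WM+2)/W$ modulo $p$ are generically distinct, giving $\omega(p)=4$, and they collapse only when $p\mid M$ (two pairs coincide, so $\omega(p)=2$) or when $p\mid (WM+2)(WM-2)$ with $p\nmid M$ (one pair coincides, so $\omega(p)=3$). Analogously, $\omega_{13}(p):=\#\{y\bmod p:p\mid L_1(y)L_3(y)\}$ equals $2$ for $p\nmid WM$ and equals $1$ for $p\mid M$, $p\nmid W$.

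\textbf{Step 2 (hybrid sieve and Euler product).} For $n$ large, $L_1,L_3$ prime forces $(L_1L_3,P(\sqrt n))=1$, so the count $S$ in the lemma is bounded above by the number of $x\leq n/W$ with $(L_1L_3,P(\sqrt n))=1=(L_2L_4,P(z_0))$. To this I apply an upper-bound sieve with two sifting levels: Selberg's $\Lambda^2$-weights on $(L_2,L_4)$ supported on $d\leq z_0$, and the inner quantity $\#\{x\leq n/W:L_1,L_3\text{ prime};\ d\mid L_2L_4\}$ (for square-free $d$ coprime to $W$) controlled, uniformly for $d\leq n^{1/2-\epsilon}$, by the classical Selberg/Rosser upper bound for prime pairs of difference $WM$ in the arithmetic progression modulo $Wd$ prescribed by the sieve. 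This yields
\begin{align*}
S\ll \frac{n}{W}\prod_{\substack{p<z_0\\ p\nmid W}}\!\!\bigg(1-\frac{\omega(p)}{p}\bigg)\prod_{\substack{z_0\leq p<\sqrt n\\ p\nmid W}}\!\!\bigg(1-\frac{\omega_{13}(p)}{p}\bigg).
\end{align*}
Using $(1-2/p)/(1-4/p)=1+2/p+O(p^{-2})$ and $(1-3/p)/(1-4/p)=1+1/p+O(p^{-2})$, the first product factorises, modulo a bounded constant, as
\begin{align*}
\prod_{\substack{p<z_0\\ p\nmid W}}\!\!\bigg(1-\frac{4}{p}\bigg)\prod_{\substack{p\mid M\\ p\nmid W}}\!\!\bigg(1+\frac{2}{p}\bigg)\prod_{\substack{p\mid(WM+2)(WM-2)\\ p\nmid MW}}\!\!\bigg(1+\frac{1}{p}\bigg).
\end{align*}
Mertens' theorem applied to $(1-2/p)$, together with the relation $(1-4/p)\asymp(1-2/p)^2$ and $\prod_{2<p<z,\,p\nmid W}(1-2/p)\asymp W/(\phi_2(W)(\log z)^2)$, gives
$$
\prod_{\substack{p<z_0\\ p\nmid W}}\!\!\bigg(1-\frac{4}{p}\bigg)\asymp \frac{k_0^4 W^2}{\phi_2(W)^2(\log n)^4},\qquad \prod_{\substack{z_0\leq p<\sqrt n\\p\nmid W}}\!\!\bigg(1-\frac{\omega_{13}(p)}{p}\bigg)\asymp \frac{1}{k_0^2}.
$$
Multiplying through by $n/W$ and absorbing the (finitely many) primes $p\mid(WM\pm 2)$ with $p\geq z_0$ into the implicit constant yields the claimed bound.

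\textbf{Main obstacle.} The crux is to obtain the $k_0^2$ saving, rather than the weaker $k_0^4$ that a direct $4$-dimensional sieve at level $z_0$ would produce: this requires sifting $(L_1,L_3)$ all the way up to $\sqrt n$ (using their primality), which in turn relies on a Selberg/Rosser prime-pair upper bound uniform over moduli $Wd$ with $d\leq n^{1/2-\epsilon}$. Fortunately this is within the scope of classical unconditional sieve theory---only an upper bound is needed, not the as-yet-unknown equidistribution of prime pairs---so no Bombieri--Vinogradov theorem for prime pairs is invoked. Once the hybrid setup is in place, the remaining Mertens bookkeeping (with routine but careful treatment of small primes $p\in\{2,3,5\}$) is straightforward.
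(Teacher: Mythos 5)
Your local-density and Mertens bookkeeping are correct, and you have correctly identified the crux: to get $k_0^2$ rather than $k_0^4$ one must sift $L_1,L_3$ to a power of $n$ rather than to $z_0$. But the \emph{hybrid} sieve as you have set it up does not execute. When you expand $\bigl(\sum_{d\mid(L_2L_4,P(z_0))}\lambda(d)\bigr)^2=\sum_{d_1,d_2}\lambda(d_1)\lambda(d_2)\mathbf 1_{[d_1,d_2]\mid L_2L_4}$ over the $x$ for which $L_1,L_3$ are prime, you arrive at $\sum_{d_1,d_2}\lambda(d_1)\lambda(d_2)\,N_{[d_1,d_2]}$ with $N_e=\#\{x\leq n/W:L_1,L_3\text{ prime},\ e\mid L_2L_4\}$. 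The Selberg weights $\lambda(d)$ have \emph{mixed signs}, so you cannot insert a one-sided bound such as the Selberg/Rosser prime-pair upper bound for $N_e$: to recover the Selberg main term you would need $N_e$ in the form $Xg(e)+r_e$ with the errors $r_e$ small \emph{on average over $e$ up to the level of the sieve}. That is precisely a Bombieri--Vinogradov-type equidistribution statement for prime pairs in arithmetic progressions, which is not available. Your ``Main obstacle'' paragraph asserts that only an upper bound is needed, but that is the mistake---the sign structure of $\Lambda^2$ makes an averaged asymptotic unavoidable if you keep ``$L_1,L_3$ prime'' inside.

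The paper sidesteps this cleanly: it never uses the primality of $L_1,L_3$. Instead it replaces $\mathbf 1_{L_1,L_3\text{ prime}}$ by the larger weight $\bigl(\sum_{e\mid(L_1L_3,P(z_1))}\lambda_2(e)\bigr)^2$ with a \emph{second} Selberg $\Lambda^2$ sieve at the fixed level $z_1=n^{1/10}$ (so the range $[z_0,z_1)$ is sifted, and the condition ``$(L_1L_3,P(z_1))=1$'' is weaker than primality but quantitatively just as good up to an absolute constant). After inserting both squared weights, the inner sum is an elementary congruence count $\#\{x\leq n/W:[d_1,d_2]\mid L_2L_4,\ [d_3,d_4]\mid L_1L_3\}=\tfrac{n/W}{[d_1,d_2][d_3,d_4]}\omega_1([d_1,d_2])\omega_2([d_3,d_4])+O(1)$, whose $O(1)$ error is trivially summable since the total level $z_0^2z_1^2$ is a small power of $n$. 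The doubled Mertens product then gives $(\log z_0)^{-2}(\log z_1)^{-2}\asymp k_0^2/(\log n)^4$, exactly the factor you computed, but now by an unconditional argument. If you replace your appeal to a prime-pair Brun--Titchmarsh estimate inside the $\Lambda^2$ expansion with this second $\Lambda^2$ sieve for $(L_1L_3,P(n^{1/10}))$, your write-up becomes correct and essentially coincides with the paper's.
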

\begin{proof}
Let $z_1=n^{1/10}$. Let $\omega_1$ and $\omega_2$ be two
multiplicative functions satisfying that
$$
\omega_1(p)=\begin{cases} 4&\quad\text{ if
}p<z_0\text{ and }p\nmid{WM(WM-2)(WM+2)},\\
3&\quad\text{ if
}p<z_0\text{ and }p\mid{(WM-2)(WM+2)},\ p\nmid W,\\
2&\quad\text{ if
}p<z_0,\ p\mid{M}\text{ and }p\nmid W,\\
0&\quad\text{otherwise},\\
\end{cases}
$$
and
$$
\omega_2(p)=\begin{cases} 2&\quad\text{ if
}z_0\leq p<z_1\text{ and }p\nmid{WM},\\
1&\quad\text{ if
}z_0\leq p<z_1,\ p\mid{M}\text{ and }p\nmid W,\\
0&\quad\text{otherwise},
\end{cases}
$$
for prime $p$. And for $1\leq i\leq 2$, let $g_i$  be the
multiplicative functions with
$$
g_i(p)=\frac{\omega_i(p)}{p}\bigg(1-\frac{\omega_i(p)}{p}\bigg)^{-1}
$$
for prime $p$, and let
$$
G_1^{(i)}(z)=\sum_{\substack{l\mid P(z)\\ l<z}}g_i(l).
$$
Define
$$
\lambda_1(d)=\frac{d}{\omega_1(d)}\sum_{\substack{l\mid P(z_0)\\
d\mid l<z_0}}\frac{\mu(l/d)\mu(l)g_1(l)}{G_1^{(1)}(z_0)}
$$
for $d\mid P(z_0)$ and
$$
\lambda_2(d)=\frac{d}{\omega_2(d)}\sum_{\substack{l\mid P(z_0,z_1)\\
d\mid l<z_1}}\frac{\mu(l/d)\mu(l)g_2(l)}{G_1^{(2)}(z_1)}
$$
for $d\mid P(z_0,z_1)=\prod_{z_0\leq p<z_1}p$. Then
$\lambda_1(1)=\lambda_2(1)=1$. Therefore
\begin{align*}
&\sum_{\substack{x_1,x_2\leq n/W\\
(Wx_i+b+2,P(z_0))=1\\ (Wx_i+b,P(z_1))=1\\ x_2-x_1=M}}1\\
\leq&\sum_{\substack{x\leq n/W}}\bigg(\sum_{\substack{d\mid P(z_0)\\
d\mid  (Wx+b)(Wx+WM+b)\\
d\mid (Wx+b+2)(Wx+WM+b+2)}}\lambda_1(d)\bigg)^2
\bigg(\sum_{\substack{d\mid P(z_0,z_1)\\
d\mid (Wx+b)(Wx+WM+b)}}\lambda_2(d)\bigg)^2\\
=&\sum_{\substack{d_1,d_2\mid P(z_0)\\ d_3,d_4\mid
P(z_0,z_1)}}\lambda_1(d_1)\lambda_1(d_2)\lambda_2(d_3)\lambda_2(d_4)\sum_{\substack{x\leq n/W\\
[d_3,d_4]\mid (Wx+b)(Wx+WM+b)\\
[d_1,d_2],[d_3,d_4]\mid (Wx+b+2)(Wx+WM+b+2)}}1\\
=&\sum_{\substack{d_1,d_2\mid P(z_0)\\ d_3,d_4\mid
P(z_0,z_1)}}\lambda_1(d_1)\lambda_1(d_2)\lambda_2(d_3)\lambda_2(d_4)\omega_1([d_1,d_2])\omega_2([d_3,d_4])\bigg(\frac{n/W}{[d_1,d_2][d_3,d_4]}+O(1)\bigg).
\end{align*}
By Selberg's sieve method, we know that
$|\lambda_1(d)|,|\lambda_2(d)|\leq 1$ and
\begin{align*}
&\sum_{\substack{d_1,d_2\mid P(z_0)\\ d_3,d_4\mid
P(z_0,z_1)}}\lambda_1(d_1)\lambda_1(d_2)\lambda_2(d_3)\lambda_2(d_4)\frac{\omega_1([d_1,d_2])\omega_2([d_3,d_4])}{[d_1,d_2][d_3,d_4]}\\
=&\bigg(\sum_{d_1,d_2\mid
P(z_0)}\lambda_1(d_1)\lambda_1(d_2)\frac{\omega_1([d_1,d_2])}{[d_1,d_2]}\bigg)\bigg(\sum_{d_3,d_4\mid
P(z_0,z_1)}\lambda_2(d_3)\lambda_2(d_4)\frac{\omega_2([d_3,d_4])}{[d_3,d_4]}\bigg)\\
=&\frac{1}{G_1^{(1)}(z_0)}\frac{1}{G_1^{(2)}(z_1)}\ll\prod_{p
<z_0}\bigg(1-\frac{\omega_1(p)}{p}\bigg)\prod_{z_0\leq
p<z_1}\bigg(1-\frac{\omega_2(p)}{p}\bigg).
\end{align*}
Thus
\begin{align*}
&\sum_{\substack{x_1,x_2\leq n/W\\
(Wx_i+b+2,P(z_0))=1\\ (Wx_i+b,P(z_1))=1\\
x_2-x_1=M}}1\\
\ll&\frac{n}{W}\prod_{p\mid
P(z_0)}\bigg(1-\frac{\omega_1(p)}{p}\bigg)\prod_{p\mid
P(z_0,z_1)}\bigg(1-\frac{\omega_2(p)}{p}\bigg)\\
\ll&\frac{n}{W(\log z_0)^2(\log z_1)^2}\prod_{\substack{
p\mid W}}\bigg(1+\frac{4}{p}\bigg)\prod_{\substack{p\mid M\\
p\nmid W}}\bigg(1+\frac{2}{p}\bigg)\prod_{\substack{p\mid
(WM+2)(WM-2)\\ p\nmid W}}\bigg(1+\frac{1}{p}\bigg)\\
\ll&\frac{k_0^2nW}{\phi_2(W)^2\log^4 n}\prod_{\substack{p\mid M\\
p\nmid W}}\bigg(1+\frac{2}{p}\bigg)\prod_{\substack{p\mid
(WM+2)(WM-2)\\ p\nmid W}}\bigg(1+\frac{1}{p}\bigg).
\end{align*}
\end{proof}

\begin{Lem}
Then
\begin{align}
\label{equalsum} \sum_{\substack{p_1,p_2,p_3,p_4\leq n\\
(p_i+2,P(z_0))=1\\ p_i\equiv b\pmod{W}\\ p_1+p_4=p_2+p_3}}1
\ll\frac{k_0^4Wn^3}{\phi_2(W)^4\log^8 n}.
\end{align}
\end{Lem}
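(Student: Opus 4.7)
The plan is to split \eqref{equalsum} according to the common difference $WM:=p_2-p_1=p_4-p_3$, which is forced to be a multiple of $W$ by the residue condition on the $p_i$. For each $M$ with $|M|\leq n/W$, set
\[
N(M)=\#\{(p_1,p_2):\, p_i\leq n,\ p_i\equiv b\pmod{W},\ (p_i+2,P(z_0))=1,\ p_2-p_1=WM\},
\]
so that the quadruples factor into two independent pairs of equal difference, and the left-hand side of \eqref{equalsum} equals $\sum_{M\in\Z}N(M)^2$. The diagonal $M=0$ contributes only $N(0)^2\ll(k_0n/(\phi_2(W)\log n))^2$ by Chebyshev together with Lemma~\ref{spmFf}, which is dwarfed by the target bound for $n$ large. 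It therefore suffices to bound $\sum_{1\leq M\leq n/W}N(M)^2$.

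For $M\neq 0$, Lemma~\ref{primediff} gives $N(M)\ll B(M)$ with
\[
B(M)=\frac{k_0^2nW}{\phi_2(W)^2\log^4 n}\cdot E(M),\qquad E(M):=\prod_{\substack{p\mid M\\ p\nmid W}}\bigg(1+\frac{2}{p}\bigg)\prod_{\substack{p\mid(WM-2)(WM+2)\\ p\nmid W}}\bigg(1+\frac{1}{p}\bigg),
\]
so the problem reduces to the mean-square estimate $\sum_{1\leq M\leq n/W}E(M)^2\ll n/W$, which then produces exactly the target order $k_0^4Wn^3/(\phi_2(W)^4\log^8 n)$.

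For this last estimate I use $(1+2/p)^2\leq 1+5/p$ and $(1+1/p)^2\leq 1+3/p$; since the condition $p\nmid W$ excludes $p=2$, the prime sets $\{p\mid M\}$ and $\{p\mid(WM)^2-4\}$ are disjoint, so
\[
E(M)^2\leq\prod_{\substack{p\mid M(WM-2)(WM+2)\\ p\nmid W}}\bigg(1+\frac{5}{p}\bigg)=\sum_{\substack{e\mid M(WM-2)(WM+2)\\ \mu^2(e)=1,\ (e,W)=1}}\frac{5^{\omega(e)}}{e}.
\]
Swapping summation and using the Chinese Remainder Theorem, the inner count $\#\{M\leq n/W:\, e\mid M(WM-2)(WM+2)\}$ is at most $3^{\omega(e)}(n/(We)+1)$, because for each prime $p\mid e$ (so $p>2$) the only residues mod $p$ with $p\mid M(WM-2)(WM+2)$ are $M\equiv 0,\pm 2\overline{W}\pmod p$. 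The resulting main-term series $\sum_{e}\mu^2(e)15^{\omega(e)}/e^2$ converges absolutely, giving the required $O(n/W)$; the ``$+1$'' error contributes only $(\log n)^{O(1)}$, absorbed easily since $W\leq(\log n)^B$. The only potentially fiddly step is this multiplicative bookkeeping, but it is entirely routine and no further analytic input is needed beyond Lemma~\ref{primediff}.
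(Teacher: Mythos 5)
Your proposal is correct in structure and takes a genuinely different route from the paper for the key arithmetic estimate. Both you and the paper begin by writing the quadruple sum as $\sum_M N(M)^2$ (with $N(M)$ counting pairs $p_2-p_1=WM$), crudely disposing of small $M$, and applying Lemma~\ref{primediff} pointwise. The difference is in how $\sum_M E(M)^2$ is then bounded. The paper applies the H\"older inequality with three exponents $3$ to split into three separate sums, one of the form $\sum_M\prod_{p\mid M}(1+2/p)^6$ (handled by citing Lemma~14 of Green's Duke paper) and two of the form $\sum_M\prod_{p\mid WM\pm2,\,p\neq2}(1+1/p)^6$ (handled by a divisor-sum expansion). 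You instead observe that, because $p\nmid W$ excludes $p=2$, the prime sets $\{p\mid M\}$ and $\{p\mid(WM)^2-4\}$ in the product are disjoint; you merge the two Euler factors into a single product $\prod(1+c/p)$, expand once as a squarefree divisor sum, and bound the inner count by CRT (at most three admissible residues mod each odd $p\mid e$). This is more elementary and self-contained: no H\"older step and no appeal to an external lemma.

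Two small slips, neither of which harms the argument. First, the inequality $(1+2/p)^2\leq 1+5/p$ is false at $p=3$ (it reads $25/9\leq 8/3$); you should use $1+6/p$ (say), which turns $15^{\omega(e)}$ into $18^{\omega(e)}$, and $\sum_e\mu^2(e)18^{\omega(e)}/e^2$ still converges, so nothing else changes. (In the application one has $W=P(w)$ with $w\to\infty$, so eventually $3\mid W$ and the point is moot, but the lemma is stated for general even $W$.) Second, you should exclude the degenerate $WM\leq 2$ (possible only if $W=2$, $M=1$), where $WM-2=0$ and Lemma~\ref{primediff}'s Euler product is vacuous; the paper handles this by summing over $M>2$ and absorbing the rest into the trivial $O(n^2)$ term, and you can do the same. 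Your diagonal bound for $N(0)$ should also read $k_0n/(\phi_2(W)\log^2 n)$ rather than with a single $\log$, but since it is being discarded as an error term the typo is harmless.
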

\begin{proof}
Applying Lemma \ref{primediff},
\begin{align*}
\sum_{\substack{p_1,p_2,p_3,p_4\leq n\\
(p_i+2,P(z_0))=1\\ p_i\equiv b\pmod{W}\\
p_1+p_4=p_2+p_3}}1\leq&\sum_{\substack{2<M\leq n/W}}\bigg(\sum_{\substack{p_1,p_2\leq n\\
(p_i+2,P(z_0))=1\\ p_i\equiv b\pmod{W}\\
p_1+WM=p_2}}1\bigg)^2+O(n^2)\\
\ll&\frac{k_0^4n^2W^2}{\phi_2(W)^4\log^8 n}\sum_{\substack{2<M\leq
n/W}}\prod_{\substack{p\mid M\\ p\nmid
W}}\bigg(1+\frac{2}{p}\bigg)^2\prod_{\substack{p\mid (WM-2)(WM+2)\\
p\nmid W}}\bigg(1+\frac{1}{p}\bigg)^2.
\end{align*}
By the H\"older inequality,
\begin{align*}
&\sum_{\substack{2<M\leq n/W}}\prod_{\substack{p\mid M\\ p\nmid
W}}\bigg(1+\frac{2}{p}\bigg)^2
\prod_{\substack{p\mid (WM-2)(WM+2)\\ p\nmid W}}\bigg(1+\frac{1}{p}\bigg)^2\\
\leq&\bigg(\sum_{\substack{2<M\leq n/W}}\prod_{\substack{p\mid
M}}\bigg(1+\frac{2}{p}\bigg)^6\bigg)^{1/3}
\prod_{j=1}^2\bigg(\sum_{\substack{2<M\leq
n/W}}\prod_{\substack{p\mid WM+2(-1)^j\\
p\not=2}}\bigg(1+\frac{1}{p}\bigg)^6\bigg)^{1/3}.
\end{align*}
Since $(1+p^{-1})^6\leq 1+24p^{-1}$,
\begin{align*}
\sum_{\substack{2<M\leq n/W}}\prod_{\substack{p\mid WM\pm 2\\
p\not=2}}\bigg(1+\frac{1}{p}\bigg)^6 \leq&\sum_{\substack{2<M\leq
n/W}}
\sum_{\substack{d\mid WM\pm 2\\ 2\nmid d}}\frac{\tau(d)^{12}}{d}\\
=&\sum_{\substack{d\mid n\pm 2\\
(d,W)=1}}\frac{\tau(d)^{12}}{d}\sum_{\substack{2<M\leq n/W\\ d\mid
WM\pm 2}}1 \ll\frac{n}{W}.
\end{align*}
And by \cite[Lemma 14]{Green02}, we have
$$
\sum_{\substack{2<M\leq n/W}}\prod_{\substack{p\mid
M}}\bigg(1+\frac{2}{p}\bigg)^6\leq \sum_{\substack{2<M\leq
n/W}}\prod_{\substack{p\mid M}}\bigg(1+\frac{1}{p}\bigg)^{12}\ll
\frac{n}{W}.
$$
\end{proof}

\section{Proof of Theorem \ref{chengoldbach}}
\setcounter{equation}{0} \setcounter{Thm}{0} \setcounter{Lem}{0}
\setcounter{Cor}{0}

First, let us introduce Green and Tao's enveloping sieve. Let $N$ be
a large integer. Suppose that $a_1,\ldots,a_k,b_1,\ldots,b_k$ be
integers with $|a_i|,|b_i|\leq N$. We say
$$
\F(x):=\prod_{i=1}^k(a_ix+b_i).
$$
is a $k$-linear form.

For every integer $q\geq 1$, define
$$
\gamma_\F(p):=q^{-1}|\{1\leq x\leq q:\, (\F(x),q)=1\}|>0.
$$
Let
$$
X_{R!}(x)=\{x\in\Z:\, (\F(x),R!)=1\},
$$
where $1\leq R\leq N$.
\begin{Lem}[{\cite[Proposition 3.1]{GreenTao06}}]
\label{envsieve} There exists a non-negative function
$\beta_R:\Z\to\R$ satisfying the following properties:

\medskip\noindent(i)
\begin{equation}
\label{betalower} \beta_R(x)\gg_k\mathfrak{S}_\F^{-1}\log^k R
\,\1_{X_{R!}}(x)
\end{equation}
for all integers $n$, where
$$
\fS_\F:=\prod_{p}\frac{\gamma_\F(p)}{(1-1/p)^k}.
$$

\medskip\noindent(ii)
\begin{equation}
\label{beteupper}
\beta_R(x)\ll_{k,\epsilon}N^\epsilon
\end{equation}
for all $1\leq x\leq N$ and $\epsilon>0$.

\medskip\noindent(iii)
\begin{equation}
\beta_R(x)=\sum_{q\leq R^2}\sum_{\substack{1\leq a\leq q\\ (a,q)=1}}w(a/q)e(-ax/q),
\end{equation}
where $w(a/q)=w_R(a/q)$ satisfies $w(1)=1$ and
$$
|w(a/q)|\ll_{k,\epsilon} q^{\epsilon-1}
$$
for all $1\leq a\leq q\leq R^2$ with $(a,q)=1$.

\medskip\noindent(iv)
For $1\leq a\leq q\leq R^2$ with $(a,q)=1$, if $q$ is not square-free, or $\gamma(q)=1$ and $q>1$, then
$w(a/q)=0$.
\end{Lem}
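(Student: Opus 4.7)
The plan is to follow the Goldston--Y\i ld\i r\i m and Green--Tao construction of a truncated divisor sum majorant. Explicitly, I would set
$$
\Lambda_R(n)=\sum_{\substack{d\mid n\\ d\le R}}\mu(d)\log^k(R/d)
$$
and define
$$
\beta_R(x)=\frac{c_k}{\fS_\F\log^k R}\,\Lambda_R(\F(x))^2
$$
for a suitable constant $c_k$ depending only on $k$. Non-negativity is automatic from the squaring. For property (i), if $(\F(x),R!)=1$ then every prime factor of $\F(x)$ exceeds $R$, so the only divisor $d\le R$ of $\F(x)$ is $d=1$, and the inner sum reduces to $\log^k R$; the normalization then yields $\beta_R(x)\gg_k\fS_\F^{-1}\log^k R$. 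For property (ii), the trivial bound $|\Lambda_R(\F(x))|\le\tau(\F(x))\log^k R\ll_{k,\epsilon}N^{\epsilon/2}$ upon squaring gives (ii).

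For (iii), I would expand the square
$$
\Lambda_R(\F(x))^2=\sum_{d_1,d_2\le R}\mu(d_1)\mu(d_2)\log^k(R/d_1)\log^k(R/d_2)\,\1_{[d_1,d_2]\mid\F(x)},
$$
group terms by $q=[d_1,d_2]\le R^2$ (necessarily squarefree whenever $\mu(d_1)\mu(d_2)\ne0$), and apply the Fourier identity
$$
\1_{q\mid\F(x)}=\frac{1}{q}\sum_{r\pmod q}\hat S_\F(q,r)\,e(rx/q),\qquad \hat S_\F(q,r):=\sum_{\substack{y\pmod q\\ q\mid\F(y)}}e(-ry/q).
$$
Reorganizing by the reduced form $a/q_0$ of each frequency $r/q$ (so $q_0\mid q\le R^2$ and $(a,q_0)=1$) collects the coefficient of $e(-ax/q_0)$ into the desired $w(a/q_0)$, with $w(1)$ equal to the $q=1$ contribution, which the choice of $c_k$ normalizes to $1$.

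For (iv), the squarefree condition is automatic since only squarefree $q=[d_1,d_2]$ appear in the expansion, so any $q_0$ with $q_0\mid q$ is squarefree. If $\gamma_\F(p)=1$ for some prime $p\mid q_0$, then no residue $y\pmod p$ satisfies $p\mid\F(y)$; by CRT the set $\{y\pmod q:q\mid\F(y)\}$ is empty for every $q$ divisible by $p$, hence $\hat S_\F(q,r)\equiv0$ and $w(a/q_0)=0$.

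The main obstacle is the uniform coefficient bound $|w(a/q)|\ll_{k,\epsilon}q^{\epsilon-1}$: extracting this decay requires careful analysis of the Selberg-type sum $W_R(q)=\sum_{[d_1,d_2]=q,\,d_i\le R}\mu(d_1)\mu(d_2)\log^k(R/d_1)\log^k(R/d_2)$ together with the estimate $|\hat S_\F(q,r)|\ll_k\tau(q)$, and a multiplicative decomposition of the collected sum across the primes dividing $q$ to exploit the logarithmic weights. This is the technical heart of \cite[Proposition 3.1]{GreenTao06}, and I would follow their computation verbatim for the precise evaluation rather than reprove it here.
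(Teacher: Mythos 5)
The paper does not prove this lemma; it is imported verbatim with the citation to Green and Tao's ``Restriction theory of the Selberg sieve'' (Proposition 3.1 there), so there is no in-paper proof to compare against. Your reconstruction is therefore being compared against the cited source, and it departs from it at the construction stage: you take $\beta_R$ proportional to $\Lambda_R(\F(x))^2$ with the Goldston--Y\i ld\i r\i m logarithmic weights $\mu(d)\log^k(R/d)$, whereas Green and Tao, as their title advertises, square the \emph{Selberg} sieve weights $\lambda_d$ obtained from the quadratic-form optimization with respect to the multiplicative function built from $\gamma_\F$. Both are truncated-divisor-sum squares and both routes can be made to deliver (i)--(iv); in particular your Fourier expansion of $\1_{q\mid\F(x)}$, the observation that only squarefree $q=[d_1,d_2]$ occur, and the vanishing of $\hat S_\F(q,r)$ when some $p\mid q$ has $\gamma_\F(p)=1$ are all fine and parallel the cited argument. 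But the two constructions do not reduce to one another, and the coefficient analysis is genuinely different.

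Two concrete issues with your sketch as written. First, the normalizer cannot be a constant $c_k$ depending only on $k$ if you want $w(1)=1$ \emph{exactly}: the constant Fourier mode of $\Lambda_R(\F(\cdot))^2$ equals $\fS_\F(\log R)^k$ only up to a $1+o(1)$ factor (this is precisely the GY main-term computation), so the normalizer must be the exact value of that constant mode, which depends on $R$ and on $\F$, not just on $k$. The statement of (i) then still holds because that exact value is $\asymp_k\fS_\F(\log R)^k$. Second, and more substantially, the bound $|w(a/q)|\ll_{k,\epsilon}q^{\epsilon-1}$ is the technical core, and you explicitly defer it to Green--Tao's computation; but since they work with the Selberg weights rather than with $\mu(d)\log^k(R/d)$, their evaluation of the collected coefficient (which exploits a multiplicative structure in $q$ inherited from the specific form of $\lambda_d$) does not transfer ``verbatim'' to your setting --- you would need to redo the multiplicativity/decay argument for the logarithmic weights. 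So the proposal has the right shape and the easy parts (i), (ii), (iv) are correct, but as a proof of the cited proposition it is incomplete at exactly the step that carries the content, and it silently swaps the sieve weight that the quoted result is actually built on.
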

Green and Tao also established a restriction theorem for $\beta_R$:
\begin{Lem}[{\cite[Proposition 4.2]{GreenTao06}}]
Let $R$, $N$ be large numbers such that $1\leq R\leq N^{1/10}$. Let
$k$, $\F$, $\beta_R$ as defined in Lemma \ref{envsieve}. Suppose
that $\{u_i\}_{i=1}^N$ is an arbitrary sequence of complex numbers.
Then for any $\rho>2$,
\begin{equation}\label{restriction}\bigg(\sum_{r\in\Z_N}\bigg|\frac{1}{N}\sum_{1\leq x\leq
N}u_x\beta_R(x)e(-xr/N)\bigg|^\rho\bigg)^{1/\rho}
\ll_{\rho,k}\bigg(\frac{1}{N}\sum_{1\leq x\leq
N}|u_x|^2\beta_R(x)\bigg)^{1/2}. \end{equation}
\end{Lem}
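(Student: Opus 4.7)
The plan is to use the $TT^*$ method together with the arithmetic structure of $\beta_R$ supplied by Lemma \ref{envsieve}. Define $T: L^2(\{1,\ldots,N\},\beta_R\,dm)\to\ell^\rho(\Z_N)$ by $Tu(r)=\frac{1}{N}\sum_x u_x\beta_R(x)e(-xr/N)$, where $dm$ is the normalized counting measure. A routine computation shows that its adjoint is $T^*g(x)=\sum_r g(r)e(xr/N)$, so $TT^*$ is exactly $N^{-1}$ times convolution on $\Z_N$ with $\widehat{\beta_R}^\vee$, where $\widehat{\beta_R}(r):=\sum_{1\le x\le N}\beta_R(x)e(xr/N)$. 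Consequently $\|T\|^2=\|TT^*\|_{\ell^{\rho'}\to\ell^\rho}$, and the target restriction inequality reduces to an $\ell^{\rho'}\to\ell^\rho$ convolution bound with kernel $\widehat{\beta_R}$.

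Next, I would exploit property (iii) of Lemma \ref{envsieve}, which expresses $\beta_R$ as a short trigonometric polynomial supported on rationals $a/q$ with $q\le R^2$ and coefficients satisfying $|w(a/q)|\ll q^{\epsilon-1}$. Substituting into $\widehat{\beta_R}$ gives
\[
\widehat{\beta_R}(r)=\sum_{q\le R^2}\sum_{\substack{1\le a\le q\\(a,q)=1}}w(a/q)\sum_{x=1}^N e\bigl(x(r/N-a/q)\bigr),
\]
so $\widehat{\beta_R}$ concentrates near the major-arc frequencies $r\approx aN/q$ with peak height $\sim N|w(a/q)|$ and bounded width. For $\rho>4$, Young's convolution inequality then reduces the $TT^*$ bound to the elementary moment estimate $\|\widehat{\beta_R}\|_{\ell^{\rho/2}(\Z_N)}\ll N$, which follows by summing $|w(a/q)|^{\rho/2}$ together with the vanishing furnished by (iv).

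For the critical range $2<\rho\le 4$ the argument is more delicate: I would work with even integer moments $\rho=2m$ and expand
\[
\|Tu\|_{2m}^{2m}=\frac{1}{N^{2m-1}}\sum_{\substack{x_1,\ldots,x_m,y_1,\ldots,y_m\\ \sum x_i\equiv\sum y_i\!\!\!\pmod N}}\prod_{i=1}^m u_{x_i}\overline{u_{y_i}}\beta_R(x_i)\beta_R(y_i),
\]
controlling the resulting representation sum by repeatedly applying Cauchy--Schwarz and exploiting the Fourier structure of $\beta_R$; Marcinkiewicz interpolation against the trivial $\ell^\infty$ bound (which is immediate from (i), (ii) and Cauchy--Schwarz) then recovers all $\rho>2$. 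The hard part will be precisely this moment bound in the range $2<\rho\le 4$: the pointwise estimate $|w(a/q)|\ll q^{\epsilon-1}$ is by itself too weak, and one must genuinely use the Hardy--Littlewood majorant principle furnished by the positivity of $\beta_R$, so that the $m$-fold convolution of the weighted sequence can be estimated in terms of the $L^2(\beta_R)$ energy rather than by taking absolute values of weights---this is what the arithmetic properties (iii) and (iv) of the enveloping sieve are designed to supply.
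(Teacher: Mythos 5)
This lemma is not proved in the paper at all: it is cited verbatim as Proposition~4.2 of Green--Tao's \emph{Restriction theory of the Selberg sieve}, so there is no internal proof to compare against, and your proposal is a reconstruction of an external black box. Your $TT^*$ reduction is the correct first step and matches the cited proof, and the Young-inequality treatment of $\rho>4$ is essentially sound, but there is a genuine gap in the range $2<\rho\le 4$ --- which is the only range this paper actually uses, namely $\rho=12/5$.

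The proposed route for $2<\rho\le 4$, even integer moments plus Marcinkiewicz interpolation against the trivial $\ell^\infty$ bound, cannot close the gap. The only even integer in $(2,4]$ is $4$, and interpolating between $\rho=4$ and $\rho=\infty$ only yields $\rho\ge 4$. To reach $\rho<4$ by interpolation you would need a usable $\rho=2$ endpoint, but by Parseval the $\rho=2$ estimate reads $\bigl(\tfrac1N\sum_x|u_x|^2\beta_R(x)^2\bigr)^{1/2}\le\|\beta_R\|_\infty^{1/2}\bigl(\tfrac1N\sum_x|u_x|^2\beta_R(x)\bigr)^{1/2}$, and by part (ii) of Lemma~\ref{envsieve} the factor $\|\beta_R\|_\infty^{1/2}$ is only $O(N^{\epsilon})$, not $O(1)$; any interpolation from this endpoint carries an $N^{\epsilon}$ loss into the whole interval and destroys the theorem. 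Your appeal to ``the Hardy--Littlewood majorant principle furnished by the positivity of $\beta_R$'' is too vague to repair this: the majorant principle compares $L^{2k}$ norms of trigonometric polynomials with dominated coefficients, and does not supply the missing range of $\rho$. The cited proof instead handles every $\rho>2$ directly in the Tomas--Stein/Bourgain style, as in Green's \emph{Roth's theorem in the primes}: after the $TT^*$ step one decomposes $\widehat{\beta_R}$ according to the denominator $q$ appearing in part (iii), bounds each $q$-piece in $\ell^\infty$ (height $\ll Nq^{\epsilon-1}$) and $\ell^1$, interpolates at the level of the convolution kernel, and sums over $q$; the resulting sum $\sum_q q\cdot q^{(\epsilon-1)\theta}$ converges precisely because $\rho>2$. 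No interpolation between different values of $\rho$ is involved, and that mechanism is what your sketch is missing.
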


The following lemma can be derived by a trivial modification of
Chen's original proof in \cite{Chen73}:
\begin{Lem}
\label{chen}
Suppose that $W$ is a positive integer. Then there exists a function $n_0(W)$ such that for every $1\leq b\leq W$ with $(b(b+2),W)=1$ and $n\geq n_0(W)$,
$$
|\{p\leq n:\, p\equiv b\pmod{W},\ p+2\in\P_2\text{ and
}(p+2,P(n^{1/10}))=1\}|\geq \frac{C_1}{\phi_2(W)}\frac{n}{(\log
n)^2},
$$
where $C_1$ is an absolute constant.
\end{Lem}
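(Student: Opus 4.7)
The plan is to follow Chen's original weighted sieve argument from \cite{Chen73} verbatim, with only one modification: the sifting sequence is $\mathcal{A}=\{p+2 : p \leq n,\ p \equiv b \pmod{W}\}$ rather than $\{p+2 : p \leq n\}$, and every application of the Bombieri--Vinogradov theorem is replaced by the version in which the modulus is $Wd$ instead of $d$. Since $(b(b+2),W)=1$, the progression $p \equiv b \pmod{W}$ is compatible with the sifting condition $(p+2,P(n^{1/10}))=1$, and the $W$-dependence factors out of every step as a clean $\phi_2(W)^{-1}$, in the same manner already seen in Lemma~\ref{spmFf}.

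More precisely, I would work with Chen's weighted sum
\[
\Sigma \;=\; S(\mathcal{A}; z) \;-\; \tfrac12 \sum_{z \leq q < n^{1/3}} S(\mathcal{A}_q; z) \;-\; O\!\left(n^{9/10}\right),
\]
where $z = n^{1/10}$, $\mathcal{A}_q = \{a \in \mathcal{A} : q \mid a\}$ and $S(\mathcal{A}; z) = |\{a \in \mathcal{A} : (a,P(z))=1\}|$. A standard case analysis on the factorization shape of $p+2$, together with the observation that an integer $\leq n+2$ with smallest prime factor $\geq z$ has at most $10$ prime factors, shows $\Sigma \leq |\{p \in \mathcal{A} : p+2 \in \P_2,\ (p+2,P(z))=1\}|$, which is exactly the quantity we wish to lower-bound.

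Next I would estimate $S(\mathcal{A}; z)$ from below using Lemma~\ref{iwaniec} at level $D = n^{1/2}(\log n)^{-5B}$, with the multiplicative function $\omega(p) = p/(p-1)$ for odd $p \nmid W$ and $\omega(p) = 0$ otherwise. The remainder $\sum_{d \leq D}|r_d|$ is controlled by the Bombieri--Vinogradov theorem applied to moduli $Wd \leq Wn^{1/2} \ll n^{1/2-\epsilon}$, which is permissible since $W$ is fixed and $n$ is taken large in terms of $W$. Combined with Mertens' theorem in the form
\[
\prod_{p < z,\,p \nmid W}\!\Bigl(1 - \tfrac{1}{p-1}\Bigr) \;\sim\; \frac{4 e^{-\gamma}\,\fS_1\,\phi(W)}{\phi_2(W)\log z}
\]
(exactly the computation carried out inside Lemma~\ref{spmFf}) and $\pi(n;W,b) \sim n/(\phi(W)\log n)$, this yields $S(\mathcal{A}; z) \geq c_1 f(s) \cdot n/(\phi_2(W)\log^2 n)$ with $s = \log D/\log z \to 5$ and an absolute constant $c_1 > 0$. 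The subtracted sum $\sum_{q} S(\mathcal{A}_q; z)$ is treated by dyadically decomposing the range of $q$ and applying the upper-bound linear sieve to each block, which again reduces to Bombieri--Vinogradov for moduli $Wq \leq Wn^{1/3}$ and which produces an upper bound of the shape $c_2 I \cdot n/(\phi_2(W)\log^2 n)$ with $I$ equal to Chen's standard integral.

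The only step that is not pure bookkeeping is the numerical positivity $c_1 f(5) - \tfrac12 c_2 I > 0$; but this is precisely Chen's inequality from \cite{Chen73}, whose proof depends only on the dimension of the sieve (which is $1$ here, as in Chen's original setting) and not on the modulus $W$. Therefore Chen's absolute positive constant survives uniformly in $W$, multiplied out by the $\phi_2(W)^{-1}$ extracted above, and the claimed lower bound follows for $n \geq n_0(W)$. I anticipate no genuine obstacle: the entire adaptation is the same $W$-bookkeeping already performed for $S_{n,z_0}^{\pm}(0)$ in Section~3 of this paper.
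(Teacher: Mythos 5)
Your high-level plan is the right one and matches what the paper has in mind when it says the lemma ``can be derived by a trivial modification of Chen's original proof'': run Chen's argument with the sifting set $\mathcal{A}=\{p+2:p\leq n,\ p\equiv b\pmod W\}$, replace every Bombieri--Vinogradov application with moduli $d$ by moduli $Wd$ (legitimate since $W$ is fixed and $n\geq n_0(W)$), and observe that all $W$-dependence aggregates into a factor $\phi_2(W)^{-1}$ via the Mertens computation you quote, which is indeed the same one appearing inside Lemma~\ref{spmFf}. The remark that Chen's positivity constant is independent of $W$ because it only depends on the sieve dimension is also correct.

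However, the specific formula you wrote down for Chen's weighted sum has a genuine gap. Chen's sum is not
$S(\mathcal{A};z)-\tfrac12\sum_{z\leq q<n^{1/3}}S(\mathcal{A}_q;z)-O(n^{9/10})$;
it has a third main term,
\begin{equation*}
\Sigma \;=\; S(\mathcal{A};z)\;-\;\tfrac12\sum_{z\leq q<n^{1/3}}S(\mathcal{A}_q;z)\;-\;\tfrac12\,\Omega\;-\;O\!\bigl(n^{9/10}\bigr),
\end{equation*}
where $\Omega$ counts $p\leq n$ with $p\equiv b\pmod W$ and $p+2=q_1q_2q_3$, $z\leq q_1<n^{1/3}\leq q_2\leq q_3$. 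Without $-\tfrac12\Omega$ the claimed inequality $\Sigma\leq|\{p\in\mathcal{A}:p+2\in\P_2,\ (p+2,P(z))=1\}|$ is false: a shifted prime $p+2=q_1q_2q_3$ of the above shape has exactly one prime factor in $[z,n^{1/3})$, receives weight $1-\tfrac12>0$ in your two-term expression, yet has three prime factors, so it is not in $\P_2$. Moreover $\Omega$ is not $O(n^{9/10})$; it is of the same order $n/\log^2 n$ as the main term and is estimated in Chen's paper by the role-reversal (``switching'') device --- sieving a companion sequence built from $n$ minus products of three primes rather than from $\{p+2\}$ --- which is the technically hardest step of Chen's proof and must itself be adapted to carry the extra congruence $p\equiv b\pmod W$. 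The adaptation of the switching step still factors out as $\phi_2(W)^{-1}$, so the lemma is true and the paper's terse citation is justified, but your sketch omits precisely the non-routine part of the argument and mis-states the weighted sum on which it rests.
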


\begin{Lem}
\label{nW}
Suppose that $n_0(W)$ is an increasing positive function for all positive integers $W$, and
$G(n)$ is an increasing positive function with $\lim_{n\to\infty}G(n)=\infty$. Then there exists an increasing
positive function $W_0(n)$ for sufficiently large $n$ such that $n\geq n_0(W(n))$, $W_0(n)\leq G(n)$ and $\lim_{n\to\infty}W_0(n)=\infty$.
\end{Lem}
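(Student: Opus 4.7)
The plan is a standard diagonal/greedy construction: since $n_0$ is a fixed function of $W$ and $G(n)\to\infty$, we can afford to let $W_0(n)$ grow slowly enough that both $n\geq n_0(W_0(n))$ and $W_0(n)\leq G(n)$ hold simultaneously.

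Concretely, I would first define an increasing sequence of thresholds $N_1<N_2<\cdots$ by induction: for each positive integer $k$, set
$$
N_k:=\max\bigl\{n_0(k),\ N_{k-1}+1,\ \min\{n\in\N:\,G(n)\geq k\}\bigr\},
$$
with the convention $N_0:=0$. Each $N_k$ is a well-defined positive integer, because $n_0(k)$ is a fixed quantity and the set $\{n\in\N:\,G(n)\geq k\}$ is nonempty (hence has a minimum) thanks to the hypothesis $G(n)\to\infty$. Having fixed the $N_k$, I would then set $W_0(n):=1$ for $n<N_1$ and $W_0(n):=k$ for $N_k\leq n<N_{k+1}$.

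The verification is immediate. By construction $W_0$ is non-decreasing in $n$, and $W_0(n)\to\infty$ because $N_k\to\infty$. If $W_0(n)=k$ then $n\geq N_k\geq n_0(k)=n_0(W_0(n))$, which gives the first required inequality. Likewise, since $G$ is increasing and $G(N_k)\geq k$, for every $n\geq N_k$ we get $G(n)\geq G(N_k)\geq k=W_0(n)$, giving $W_0(n)\leq G(n)$.

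There is essentially no analytic obstacle here; the only point worth noting is the finiteness of $\min\{n\in\N:\,G(n)\geq k\}$ used above, which follows from $G(n)\to\infty$. The word ``increasing'' is being used in the non-decreasing sense, which is what the later applications (choosing a slowly growing modulus $W$ in Chen's sieve so that Lemma~\ref{chen} remains applicable at length $n$) actually require. If strict monotonicity were wanted, one could trivially thin out the constant runs of $W_0$ without affecting any of the three properties above.
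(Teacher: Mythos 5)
Your proof is correct and is essentially the same diagonal/greedy construction as in the paper, which simply takes $W_0(n)=\max\{W\leq G(n):\, n_0(W)\leq n\}$ and argues by contradiction that this maximum tends to infinity; your explicit threshold sequence $N_k$ just realizes the same step function more concretely. Both use only the monotonicity of $n_0$, $G$ and the fact that $G\to\infty$, and both yield the non-decreasing notion of ``increasing'' that the later application requires.
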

\begin{proof}
Let
$$
W_0(n)=\max\{W\leq G(n):\, n_0(W)\leq n\}.
$$
Clearly $W_0(n)$ is well-defined for sufficiently large $n$. Assume on the contrary that
$\lim_{n\to\infty}W_0(n)<\infty$, i.e., there exists an integer $W'$ such that $W_0(n)<W'$ for all $n$.
Let $n'\geq n_0(W')$ be an integer such that $G(n')\geq W'$. Obviously such $n'$ exists. But
$W_0(n')\geq W'$ now. This leads to a contradiction.
\end{proof}

Let $C_2$ be the implied constant in (\ref{betalower}) with $k=2$.
Let
$$
\varpi=\frac{\min\{C_1C_2,1\}}{10000}.
$$

Let $C_3$ be the implied constant in (\ref{restriction}) with
$\rho=12/5$ and $k=2$. And let $C_4$ be the implied constant in
(\ref{equalsum}).
\begin{Lem}
\label{kappa} There exists $0<\kappa\leq\varpi$ such that we can
choose $0<\delta,\epsilon\leq 1$ with
$\epsilon^{6C_3^{12/5}\delta^{-12/5}+60C_4\delta^{-4}}\geq\kappa$
and
$$
3072\epsilon^2(C_3\delta^{-12/5}+5C_4\delta^{-4})+
72C_3^{24/13}C_4^{3/13}\delta^{1/13}\leq\varpi^6.
$$
\end{Lem}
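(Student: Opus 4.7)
The plan is to choose the parameters in the order $\delta$, then $\epsilon$, then $\kappa$, exploiting the fact that the two displayed conditions decouple nicely once one observes that $\kappa$ only appears in the first inequality and only as a lower bound. Since $\varpi>0$ is a fixed absolute constant (depending only on $C_1$ and $C_2$), the task reduces to producing a single admissible triple.

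First I would attack the second inequality, which involves only $\delta$ and $\epsilon$. Split the left-hand side into the two summands and demand that each be $\leq \varpi^6/2$. The summand $72C_3^{24/13}C_4^{3/13}\delta^{1/13}$ depends only on $\delta$ and tends to $0$ as $\delta\to 0^+$, so I can fix some $\delta=\delta_0\in(0,1]$ small enough that
\[
72C_3^{24/13}C_4^{3/13}\delta_0^{1/13}\leq \frac{\varpi^6}{2}.
\]
With $\delta_0$ now fixed, the quantity $A:=C_3\delta_0^{-12/5}+5C_4\delta_0^{-4}$ is a fixed positive number, and the remaining summand $3072\epsilon^2 A$ tends to $0$ as $\epsilon\to 0^+$. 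Hence I can fix some $\epsilon=\epsilon_0\in(0,1]$ with
\[
3072\epsilon_0^2 A\leq \frac{\varpi^6}{2},
\]
which together with the previous display yields the second inequality of the lemma.

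Once $\delta_0$ and $\epsilon_0$ are fixed, the exponent $E:=6C_3^{12/5}\delta_0^{-12/5}+60C_4\delta_0^{-4}$ is a fixed finite positive number, so $\epsilon_0^{E}$ is a fixed positive real. I then simply set
\[
\kappa:=\min\bigl\{\varpi,\,\epsilon_0^{E}\bigr\}>0,
\]
which automatically satisfies $0<\kappa\leq\varpi$ and $\epsilon_0^{E}\geq\kappa$, establishing the first inequality. All three conditions are then met by $(\kappa,\delta_0,\epsilon_0)$.

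There is essentially no hard step here: this is a routine ``pick constants in the right order'' lemma, and the only thing one has to verify is that the three quantifiers can be resolved consistently. The mild point worth attention is the decoupling observation, namely that the second inequality is monotone in $\delta$ (through the summand with positive power $\delta^{1/13}$) once one commits to handling the $\epsilon$-dependent summand afterwards; this ordering is what allows both inequalities to be satisfied simultaneously without any circular dependence on $\kappa$.
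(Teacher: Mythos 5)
Your proof is correct and follows essentially the same strategy as the paper: choose $\delta$ small enough to control the $\delta^{1/13}$ term, then $\epsilon$ small enough to control the $\epsilon^2$ term, and finally set $\kappa$ to be the minimum of $\varpi$ and $\epsilon^E$. The only difference is that the paper writes down explicit closed-form choices of $\delta$ and $\epsilon$ (which is purely cosmetic), whereas you argue by "small enough," which is cleaner and equally valid.
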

\begin{proof}
Let
$$
\delta=\min\bigg\{\frac{\varpi^{78}}{144^{13}C_3^{24}C_4^{3}},1\bigg\}
$$
and
$$
\epsilon=\min\bigg\{\frac{\varpi^{3}\delta^{2}}{192(C_3\delta^{-12/5}+5C_4\delta^{-4})^{1/2}},1\bigg\}.
$$
Clearly $3072\epsilon^2(C_3\delta^{-12/5}+5C_4\delta^{-4})+
72C_3^{24/13}C_4^{3/13}\delta^{1/13}\leq\varpi^6$. So we may
arbitrarily choose a $\kappa$ satisfying
$$
\kappa\leq\min\{\epsilon^{6C_3^{12/5}\delta^{-12/5}+60C_4\delta^{-4}},\varpi\}.
$$
\end{proof}

Let $\kappa$ be a small constant satisfying the requirements of
Lemma \ref{kappa}. Notice that $f(s)$ is increasing, $F(s)$ is
decreasing and $F(s),f(s)=1+O(e^{-s})$. Choose a sufficiently large
$k_0$ satisfying that
$$
20(F(k_0/4)-f(k_0/4))\leq\kappa^2.
$$

Suppose that $n$ is a sufficiently large integer.
Let $w=w(n)$ be a positive function satisfying $P(w)\leq\log n$, $n\geq n_0(P(w))$ (where $n_0$ is defined in Lemma \ref{chen})
and $\lim_{n\to\infty}w(n)=\infty$. By Lemma \ref{nW}, such $w$ exists. Let $W=P(w)$. The following lemma can be easily verified:
\begin{Lem}
For any odd integer $n$ with $3\mid n$, there exist $1\leq
b_1,b_2,b_3\leq W$ with $(b_i(b_i+2),W)=1$ such that
$$
n\equiv b_1+b_2+b_3\pmod{W}.
$$
Furthermore, if $n\equiv 4\pmod{6}$, then there exist $1\leq
b_1,b_2\leq W$ with $(b_i(b_i+2),W)=1$ such that
$$
n\equiv b_1+b_2\pmod{W}.
$$
\end{Lem}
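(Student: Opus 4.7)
The plan is to reduce the statement to a purely local problem at each prime $p<w$ via the Chinese Remainder Theorem, using that $W=P(w)=\prod_{p<w}p$ is squarefree. For each prime $p\mid W$ I would set $A_p:=\{a\in\Z/p\Z:a\not\equiv 0\text{ and }a\not\equiv -2\pmod{p}\}$, so that the constraint $(b_i(b_i+2),W)=1$ is equivalent to requiring $b_i\bmod p\in A_p$ for every $p<w$. It then suffices to exhibit, for each such $p$, three (respectively two) elements of $A_p$ whose sum in $\Z/p\Z$ equals $n\bmod p$; the CRT will then glue these local choices into the desired $b_i\in\{1,\ldots,W\}$.

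The two exceptional small primes should be treated by direct inspection. At $p=2$ one has $A_2=\{1\}$, and since $n$ is odd in the first case (respectively even in the case $n\equiv 4\pmod{6}$) the sums $1+1+1=3$ and $1+1=2$ match the parity of $n$. At $p=3$ one has $A_3=\{2\}$, so $2+2+2\equiv 0\pmod{3}$ matches the hypothesis $3\mid n$, while $2+2\equiv 1\pmod{3}$ matches the hypothesis $n\equiv 4\pmod{6}$ (which forces $n\equiv 1\pmod{3}$).

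For primes $p\geq 5$ one has $|A_p|=p-2\geq 3$, and iterated applications of the Cauchy--Davenport theorem give
$$
|A_p+A_p+A_p|\geq\min(p,\,3(p-2)-2)=p,\qquad |A_p+A_p|\geq\min(p,\,2(p-2)-1)=p,
$$
so both iterated sumsets fill the whole of $\Z/p\Z$ and in particular represent $n\bmod p$. Assembling the local data for all $p<w$ through the Chinese Remainder Theorem then yields the required $b_1,b_2,b_3$ (respectively $b_1,b_2$) in $\{1,\ldots,W\}$. The statement is essentially elementary, and I do not anticipate any substantive obstacle; the only point worth checking is that the Cauchy--Davenport lower bounds become effective already at $p=5$, which they do.
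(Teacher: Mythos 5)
Your proof is correct. The paper states this lemma without proof (calling it easily verified), and your argument — reduce via CRT to a local question at each prime $p<w$, observe $A_2=\{1\}$ and $A_3=\{2\}$ match the parity and mod-$3$ hypotheses on $n$, and use Cauchy--Davenport (or simply $|A_p|>p/2$) to fill $\Z/p\Z$ for $p\geq 5$ — is exactly the natural verification the authors had in mind.
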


Now suppose that $n$ is odd. Suppose that $1\leq b_1,b_2,b_3\leq W$
are integers satisfying $(b_i(b_i+2),W)=1$ and $n\equiv
b_1+b_2+b_3\pmod{W}$. Let $n'=(n-b_1-b_2-b_3)/W$.

Let $N$ be a prime in the interval $[(1+\kappa^2/20)n/W,
(1+\kappa^2/10)n/W]$ and $R=N^{1/10}$. Thanks to the prime number
theorem, such a prime $N$ always exists whenever $n$ is sufficiently
large. Let $\F_i(x)=(Wx+b_i)(Wx+b_i+2)$ for $i=1,2$. Substituting
$N,\ R,\ \F_i$ to Lemma \ref{envsieve}, we get the desired functions
$\beta_i=\beta_{i,R}$ for $i=1,2$.

Let
$$
A_i= \{x\leq (n-b_i)/2W:\, Wx+b_i\text{ is Chen's prime and
}(Wx+b_i+2,P(n^{1/10}))=1\},
$$
for $i=1,2$, and define
$$
\a_i(x)=
\1_{A_i}(x)\frac{C_2\fS_1^{-1}}{1000}\frac{\phi_2(W)(\log(Wx+b_i))^2}{n}.
$$
By Lemma \ref{chen}, clearly we have
\begin{equation}
\label{a12sum}
\sum_{x}\a_i(x)\geq(1-\kappa)\frac{C_2\fS_1^{-1}}{1000}\frac{\phi_2(W)(\log
n)^2|A_i|}{n}\geq 6\varpi,
\end{equation}
whenever $n$ is sufficiently large.

On the other hand, it is easy to see that
$$
\fS_{\F_i}=\prod_{\substack{p\mid W\\
p>2}}\frac{1}{(1-1/p)^2}\prod_{p\nmid
W}\frac{1-2/p}{(1-1/p)^2}=\prod_{\substack{p\mid W\\
p>2}}\frac{p}{p-2}
\prod_{\substack{p>2}}\bigg(1-\frac{1}{(p-1)^2}\bigg)=\frac{W\fS_1}{\phi_2(W)}.
$$
Hence by (\ref{betalower}), for any $x$
\begin{equation}
\label{abeta} \beta_i(x)\geq \frac{C_2\phi_2(W)\fS_1^{-1}}{W}(\log
R)^2\1_{A_i}(x)\geq N\a_i(x).
\end{equation}

Let
$$
A_3=\{x\leq (n-b_3)/W:\, Wx+b_3\text{ is prime and
}(Wx+b_3+2,P(n^{1/k_0}))=1\}.
$$
and define
$$
\a_3(x)=\1_{A_3}(x)\frac{e^{\gamma}\phi_2(W)\log(Wx+b_3)\log
n}{4k_0\fS_1n}.
$$
In view of Lemmas \ref{spm} and \ref{spmFf}, we also have
\begin{equation}
\label{a3sum} \sum_{x}\a_3(x)=\frac{e^{\gamma}\phi_2(W)\log
n}{4k_0\fS_1n}S_{n,n^{1/k_0}}(0)\in[1-\kappa^2,1+\kappa^2].
\end{equation}

Below we identify the set $\{1,2,\ldots,N\}$ with the group
$\Z_N=\Z/N\Z$. If there exist $x_1\in A_1$, $x_2\in A_2$ and $x_3\in
A_3$ satisfying $x_1+x_2+x_3=n'$ in $\Z_N$, then the equality also
holds in $\Z$. In fact, since $x_1+x_2\leq n/W$ and $x_3<n/W$ in
$\Z$, we must have $x_1+x_2+x_3<n'+N$ in $\Z$.

For any function $f:\,\Z_N\to\C$, define
$$
\tilde{f}(r)=\sum_{x\in\Z_N}f(x)e(-xr/N).
$$
\begin{Lem}
\label{nuFour} Let $\nu_i=\beta_i/N$, then for any $r\in\Z_N$,
$$
|\tilde{\nu_i}(r)-\1_{r=0}|\leq C_5w^{-1/2},
$$
where $C_5$ is an absolute constant and $\1_{r=0}=1$ or $0$
according to whether $r=0$.
\end{Lem}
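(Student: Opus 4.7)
The plan is to compute $\tilde{\nu_i}(r)$ directly from the Fourier formula of Lemma \ref{envsieve}(iii). Substituting $\beta_i(x)=\sum_{q\le R^2}\sum_{(a,q)=1}w(a/q)e(-ax/q)$ into $\tilde{\nu_i}(r)=\frac{1}{N}\sum_{x\in\Z_N}\beta_i(x)e(-xr/N)$ and interchanging sums gives
\[
\tilde{\nu_i}(r)=\sum_{q\le R^2}\sum_{(a,q)=1}w(a/q)\cdot\frac{1}{N}\sum_{x=0}^{N-1}e\Big(-x\Big(\frac{a}{q}+\frac{r}{N}\Big)\Big).
\]
The $(q,a)=(1,1)$ contribution is exactly $w(1)\cdot\1_{r=0}=\1_{r=0}$, matching the main term of the claim, so the task reduces to showing the $q\ge 2$ contribution is $O(w^{-1/2})$.

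For $q\ge 2$, property (iv) of Lemma \ref{envsieve} restricts the sum to squarefree $q$ with $\gamma_{\F_i}(q)<1$. I would compute the local densities for $\F_i(x)=(Wx+b_i)(Wx+b_i+2)$: using $(b_i(b_i+2),W)=1$ and that $b_i$ is odd, one finds $\gamma_{\F_i}(p)=1$ for every prime $p\mid 2W$ and $\gamma_{\F_i}(p)=1-2/p<1$ for $p>2$, $p\nmid W$. Hence every $q$ surviving the sum must be divisible by some prime $\ge w$ (in particular $q\ge w$); this structural constraint is the source of the $w^{-1/2}$ gain. Moreover, since $q\le R^2=N^{1/5}<N$ and $N$ is prime, $(q,N)=1$, so $\theta:=a/q+r/N$ never lies in $\Z$ and the inner geometric sum has magnitude at most $\min(1,\,1/(2N\|\theta\|))$.

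I would then split the $(a,q)$ sum according to the size of $\|\theta\|$. By the Farey-spacing inequality $1/(q_1q_2)\ge 1/R^4>1/N$, at most one pair $(a,q)$ with $q\le R^2$ can satisfy $\|\theta\|<1/(2N)$, and it contributes at most $|w(a/q)|\ll_\epsilon q^{\epsilon-1}\ll w^{\epsilon-1}$ (using $q\ge w$). For the remaining ``good'' pairs, where $\|\theta\|\ge 1/(2N)$, I would combine the equispaced structure of $\{a/q+r/N:(a,q)=1\}$ in $\T$ with the multiplicative form of $w(a/q)$ arising from the Selberg construction of $\beta_R$ (compare Green-Tao \cite{GreenTao06}); the missing primes below $w$ then produce an extra $w^{-1/2}$ factor through an Euler-product estimate, while the inner exponential sum supplies a further power saving in $N$. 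Since $w$ is chosen to grow slower than $\log n$, both contributions are comfortably $\ll w^{-1/2}$, completing the proof.

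The main obstacle is the good-pair analysis: the crude pointwise bound $|w(a/q)|\ll q^{\epsilon-1}$ alone is insufficient, because $\sum_{q\le R^2}q^{\epsilon-1}\sim R^{2\epsilon}$ is not small. The power saving in $N$ therefore requires the finer Ramanujan-type factorization of $w(a/q)$ over primes dividing $q$, combined with the constraint that every surviving $q$ carries a prime factor exceeding $w$; it is this Euler-product cancellation that ultimately delivers the claimed $w^{-1/2}$ decay.
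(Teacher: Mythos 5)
The paper gives no direct proof of this lemma: it simply cites \cite[Lemma 6.1]{GreenTao06}. Your write-up is an attempt to reconstruct the argument, and the skeleton is right: expand $\tilde{\nu_i}(r)=\sum_{q\le R^2}\sum_{(a,q)=1}w(a/q)\cdot N^{-1}\sum_{x}e(-x(a/q+r/N))$; observe that the $(q,a)=(1,1)$ term is $w(1)\1_{r=0}=\1_{r=0}$; use property (iv) together with $\gamma_{\F_i}(p)=1$ for all $p\mid W$ (which holds since $(b_i(b_i+2),W)=1$) to conclude that every surviving $q\ge 2$ is squarefree with a prime factor $\ge w$, hence $q\ge w$; and bound the one ``bad'' pair $(a,q)$ with $\|a/q+r/N\|<1/(2N)$ via $|w(a/q)|\ll_\epsilon q^{\epsilon-1}\ll w^{\epsilon-1}$. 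All of this is correct.

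The gap is in the ``good'' pairs. You assert that the pointwise bound $|w(a/q)|\ll q^{\epsilon-1}$ is insufficient because $\sum_{q\le R^2}q^{\epsilon-1}$ is not small, and you then invoke a ``Ramanujan-type factorization'' and ``Euler-product cancellation'' in $w(a/q)$ as the mechanism producing the $w^{-1/2}$. This is a misdiagnosis: the divergence of $\sum_q q^{\epsilon-1}$ only arises if one discards the decay of the inner geometric sum, and no Euler-product input on $w$ is needed. Writing $\theta=a/q+r/N$, one has $|N^{-1}\sum_x e(-x\theta)|\le\min(1,(2N\|\theta\|)^{-1})$, and for each $q$ the $q$ values $a/q+r/N$ are $1/q$-spaced, so $\sum_{a}\min(1,(2N\|\theta\|)^{-1})\le\min(1,(2N\delta_0(q))^{-1})+O(q\log q/N)$, where $\delta_0(q)$ is the smallest $\|\theta\|$ over $a$. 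The second term, summed against $q^{\epsilon-1}$ over $q\le R^2=N^{1/5}$, is $O(N^{-(4-\epsilon)/5+o(1)})$. For the first, the same Farey-spacing you used for the single bad pair applies to \emph{all} $q$: the minimizing fractions $a_q/q$ are distinct reduced fractions with denominators $\le R^2$, hence pairwise $\ge R^{-4}$ apart on $\T$, so the $j$-th smallest $\delta_0(q)$ is $\ge (j-1)/(2R^4)$ and $\sum_q\min(1,(2N\delta_0(q))^{-1})\le 1+O((R^4/N)\log R)\ll 1$. Multiplying by $\sup_a|w(a/q)|\ll q^{\epsilon-1}\le w^{\epsilon-1}$ and taking $\epsilon=1/2$ gives the claimed $\ll w^{-1/2}$. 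In short, the $w^{-1/2}$ saving comes entirely from $q\ge w$ and the pointwise bound on $w(a/q)$; the good pairs are $O(N^{-3/5+o(1)})$ and are negligible, so the extra Euler-product machinery you propose is neither available at the stated level of generality nor needed.
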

\begin{proof}
See \cite[Lemma 6.1]{GreenTao06}. (Notice that our definitions are a
little different from Green and Tao's.)
\end{proof}

\begin{Lem}
\label{aFour} For any $0\not=r\in\Z_N$,
$$
|\tilde{\a_3}(r)|\leq\frac{2}{w-2}+0.9\kappa^2.
$$
\end{Lem}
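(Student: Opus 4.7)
The plan is to reduce $\tilde{\a_3}(r)$ to the weighted exponential sum $S_{n,z_0}(-r/N)$ and then feed the result into Lemma \ref{sumexp}. Concretely, I would first unwind the definitions of $\a_3$ and of $S_{n,z_0}$ (with parameters $b=b_3$ and $z_0=n^{1/k_0}$), noting that $\a_3$ is supported on $\{1,\ldots,m\}\subset\{1,\ldots,N\}$ with $m=\floor{(n-b_3)/W}$, so that the Fourier sum over $\Z_N$ coincides with the sum over $\Z$. This identifies
\[
\tilde{\a_3}(r)=\frac{e^{\gamma}\phi_2(W)\log n}{4k_0\fS_1\,n}\,S_{n,z_0}(-r/N).
\]

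Next, I would invoke Lemma \ref{sumexp}. Since $D=n^{0.32}$ and $z_0=n^{1/k_0}$ give $s=\log D/\log z_0=0.32\,k_0\geq k_0/4$, the monotonicity of $F-f$ together with the choice $20(F(k_0/4)-f(k_0/4))\leq\kappa^2$ forces $F(s)-f(s)\leq\kappa^2/20$. Rescaling the two estimates in Lemma \ref{sumexp} by the prefactor above, the minor-arc bound immediately gives $|\tilde{\a_3}(r)|\leq 5(F(s)-f(s))/4\leq\kappa^2/16<0.9\kappa^2$, while on a major arc $\M_{a,q}$ with $(a,q)=1$ and $1\leq a\leq q\leq(\log n)^B$ one obtains $|\tilde{\a_3}(r)-M(a,q)|\leq 3\kappa^2/16$, where (after using $\phi_2(Wq)=\phi_2(W)\phi_2(q)$ in the case $(W,q)=1$)
\[
M(a,q)=\1_{(W,q)=1}\,\frac{\mu(q)\tau^*(a,q)\,W}{\phi_2(q)\,n}\sum_{y=1}^{m}e(\theta y),\qquad \theta=-r/N-a/q.
\]

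It then remains to bound $|M(a,q)|$. If $(W,q)>1$ or $q$ is not square-free then $M(a,q)=0$. If $q\geq 2$ with $(W,q)=1$ and $q$ square-free, every prime divisor of $q$ is $\geq w$, so the trivial bounds $|\sum_y e(\theta y)|\leq m\leq n/W$ and $|\mu(q)\tau^*(a,q)|\leq\tau(q)$ yield
\[
|M(a,q)|\leq\frac{\tau(q)}{\phi_2(q)}=\prod_{p\mid q}\frac{2}{p-2}\leq\frac{2}{w-2},
\]
using $2/(p-2)\leq 1$ once $p\geq w\geq 4$. The delicate case is $q=1$: the naive bound $|\sum_y e(\theta y)|\leq m\asymp n/W$ would produce $|M(1,1)|\approx 1$, overshooting the target. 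The key trick is that $r\not\equiv 0\pmod N$ forces $\sum_{y=0}^{N-1}e(-ry/N)=0$, whence
\[
\Bigl|\sum_{y=1}^{m}e(-ry/N)\Bigr|=\Bigl|1+\sum_{y=m+1}^{N-1}e(-ry/N)\Bigr|\leq N-m,
\]
and the choice $N\leq(1+\kappa^2/10)n/W$ together with $m\geq n/W-2$ yields $N-m\leq\kappa^2 n/(10W)+2$, hence $|M(1,1)|\leq W(N-m)/n\leq\kappa^2/10+o(1)$. Combining all cases, $|\tilde{\a_3}(r)|\leq 2/(w-2)+0.9\kappa^2$ for sufficiently large $n$.

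The main obstacle will be the $q=1$ regime, where the naive bound on the complete geometric sum is too weak and one must exploit the engineered gap $N-m\asymp\kappa^2 n/W$; this is precisely why the factor $(1+\kappa^2/20)$ was inserted into the interval defining $N$.
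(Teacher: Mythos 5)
Your proof is correct and is essentially the paper's own argument: rewrite $\tilde{\a_3}(r)$ as the explicit prefactor times $S_{n,n^{1/k_0}}(\pm r/N)$, invoke Lemma \ref{sumexp} on minor and major arcs, and on the major arcs handle the three regimes $(W,q)>1$ (main term vanishes), $q>1$ with $(W,q)=1$ (bound $\phi_2(W)|\tau^*(a,q)|/\phi_2(Wq)\le 2/(w-2)$ using that every prime factor of $q$ is at least $w$), and $q=1$ (use $\sum_{y=0}^{N-1}e(-ry/N)=0$ so that the incomplete geometric sum is bounded by $N-m\lesssim\kappa^2 n/(10W)$). The only difference is cosmetic: the paper's displayed intermediate constants ($\tfrac{2}{5}\kappa^2$ on the minor arc, $\tfrac{7}{10}\kappa^2$ for the major-arc error) appear to have lost the factor $4$ from the definition of $\a_3$, whereas you carry it through and get the sharper $\kappa^2/16$ and $3\kappa^2/16$; both versions comfortably land inside the claimed bound $2/(w-2)+0.9\kappa^2$.
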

\begin{proof}
If $r/N\in\m$, then by Lemma \ref{sumexp}, we have
$$
|\tilde{\a_3}(r)|=\frac{e^\gamma\phi_2(W)\log
n}{k_0\fS_1n}|S_{n,n^{1/k_0}}(r/N)|\leq\frac{2}{5}\kappa^2.
$$
Suppose that there exist $1\leq a\leq q\leq (\log n)^B$ with
$(a,q)=1$ such that $r/N\in\M_{a,q}$. Then applying Lemma
\ref{sumexp},
$$
|\tilde{\a_3}(r)|\leq \frac{\phi_2(W)\log
n}{k_0\fS_1n}\bigg(\frac{\1_{(W,q)=1}|\tau^*(a,q)|k_0\fS_1W}{\phi_2(Wq)\log
n}\bigg|\sum_{1\leq y\leq m}e(\theta y)\bigg|+
\frac{7k_0\fS_1\kappa^2 n}{10\phi_2(W)\log n}\bigg),
$$
where $m=(n-b)/W$ and $\theta=r/N-a/q$. Recall that $\tau^*(a,q)=0$
whenever $(W,q)>1$. And if $a=q=1$, since $r\not=0$,
$$
\bigg|\sum_{1\leq y\leq m}e(yr/N)\bigg|\leq \bigg|\sum_{1\leq y\leq
N}e(yr/N)\bigg|+N-m\leq \frac{\kappa^2}{10} N.
$$
Suppose that $q>1$ and $(W,q)=1$. Then by noting $W=\prod_{p<w}p$,
$$
\frac{\phi_2(W)|\tau^*(a,q)|}{\phi_2(Wq)}\leq\frac{\phi_2(W)|\{d\mid
q:\, (d,q/d)=1\}|}{\phi_2(Wq)}\leq\frac{2}{w-2},
$$
since $q$ has at least one prime divisor not less than $w$. Finally,
we have $WN\leq1.1 n$.
\end{proof}

Suppose that $\delta,\epsilon>0$ are two small numbers to be chosen
later. For $1\leq i\leq 3$, let
$$
\RR_i=\{r\in\Z_N:\,|\tilde{a_i}(r)|>\delta\},
$$

$$
\B_i=\{x\in\Z_N:\,\|xr/N\|\leq \epsilon\},
$$
and define $\b_i=\1_{\B_i}/|B_i|$.
\begin{Lem}
\label{bohr}
$$
|\RR_3|\leq 60C_4\delta^{-4}.
$$
\end{Lem}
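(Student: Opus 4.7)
\medskip\noindent\emph{Proof proposal for Lemma \ref{bohr}.}

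The plan is a standard $\ell^4$ large spectrum argument à la Bourgain. First I would apply Markov's inequality in Fourier space: since every $r\in\RR_3$ contributes at least $\delta^4$ to $\sum_{r\in\Z_N}|\tilde{\a_3}(r)|^4$, we obtain
\begin{equation*}
|\RR_3|\,\delta^4\leq\sum_{r\in\Z_N}|\tilde{\a_3}(r)|^4.
\end{equation*}
Thus the whole task is to prove that the fourth moment on the right is bounded by an absolute constant times $C_4$.

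Next I would use Plancherel/Parseval applied to $\a_3*\a_3$ (with additive convolution on $\Z_N$):
\begin{equation*}
\sum_{r\in\Z_N}|\tilde{\a_3}(r)|^4
=N\sum_{\substack{x_1+x_2\equiv x_3+x_4\\ \pmod{N}}}\a_3(x_1)\a_3(x_2)\a_3(x_3)\a_3(x_4).
\end{equation*}
Since $\a_3(x)\leq c\cdot\1_{A_3}(x)$ with $c=e^{\gamma}\phi_2(W)(\log n)^2/(4k_0\fS_1 n)$, the sum is at most $Nc^4$ times the additive energy $E^N(A_3)$ of $A_3$ inside $\Z_N$.

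The main technical step is to pass from $E^N(A_3)$ (the count of additive quadruples in $\Z_N$) to the count of quadruples in $\Z$, because Lemma (equalsum) is stated over $\Z$. Writing $r_{A_3}(s)=\#\{(x,y)\in A_3^2:x+y=s\text{ in }\Z\}$ and noting $A_3\subset\{1,\ldots,(n-b_3)/W\}$ with $(n-b_3)/W<N$, only one wraparound can occur, so for every $s\in\{0,\ldots,N-1\}$ one has $r_{A_3}^N(s)\leq r_{A_3}(s)+r_{A_3}(s+N)$. Squaring and summing yields $E^N(A_3)\leq 2E(A_3)$. Setting $p_i=Wx_i+b_3$, the bound $E(A_3)\leq C_4 k_0^4Wn^3/(\phi_2(W)^4\log^8 n)$ is precisely the content of Lemma (equalsum).

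Plugging back in and using $NW\leq(1+\kappa^2/10)n\leq 1.1\,n$, the powers of $k_0$, $W$, $\phi_2(W)$ and $\log n$ all cancel, leaving
\begin{equation*}
\sum_{r\in\Z_N}|\tilde{\a_3}(r)|^4
\leq\frac{2NW}{n}\cdot\frac{e^{4\gamma}\,C_4}{256\,\fS_1^4}
\leq\frac{1.1\,e^{4\gamma}}{128\,\fS_1^4}\,C_4,
\end{equation*}
which is comfortably less than $60\,C_4$; combining with Markov gives $|\RR_3|\leq 60\,C_4\delta^{-4}$. The only obstacle I expect is bookkeeping the wraparound in $\Z_N$ (handled by the factor $2$ above) and verifying that the numerical constants really are absorbed by the generous $60$; no new analytic input beyond Lemma (equalsum) is needed. \qed
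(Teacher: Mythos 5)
Your proof is correct and follows the paper's own argument almost exactly: Markov's inequality to reduce to a fourth-moment bound, Parseval to turn that into an additive-energy count, the pointwise bound $\a_3(x)\leq c\,\1_{A_3}(x)$ with $c=e^\gamma\phi_2(W)(\log n)^2/(4k_0\fS_1 n)$, and then the quadruple count of (\ref{equalsum}). You are in fact a touch more careful than the published proof, which passes silently from the $\Z_N$-energy to the $\Z$-energy without recording the wraparound factor (at most $2$, as you show, since the support of $\a_3$ has length just under $N$); that factor is harmless, and your arithmetic confirms a bound of roughly $2.2\,e^{4\gamma}(4\fS_1)^{-4}C_4\delta^{-4}$, comfortably within $60C_4\delta^{-4}$.
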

\begin{proof} By (\ref{equalsum}),
\begin{align*}
\sum_{r\in\Z_N}|\tilde{\a_3}(r)|^4=&N\sum_{\substack{x_1,x_2,x_3,x_4\in\Z_N\\ x_1+x_4=x_2+x_3}}\a_3(x_1)\a_3(x_2)\a_3(x_3)\a_3(x_4)\\
\leq&N\bigg(\frac{e^{\gamma}\phi_2(W)(\log n)^2}{4k_0\fS_1n}\bigg)^4\sum_{\substack{p_1,p_2,p_3,p_4\leq n\\
(p_i+2,P(n^{1/k_0}))=1\\
p_i\equiv b_3\pmod{W}\\ p_1+p_4=p_2+p_3}}1\\
\leq&C_4e^{4\gamma}(4\fS_1)^{-4}\frac{NW}{n}.
\end{align*}
Hence
$$
|\RR_3|\leq\sum_{r\in\Z_N}\delta^{-4}|\tilde{a_3}(r)|^4\leq
e^{4\gamma}(4\fS_1)^{-4}.
$$
\end{proof}
\begin{Lem}
\label{bohr}
$$
|\RR_1|,|\RR_2|\leq 6C_3^{12/5}\delta^{-12/5}
$$
provided that $w\geq C_5^{2}$.
\end{Lem}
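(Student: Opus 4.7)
The plan is to apply the restriction theorem (\ref{restriction}) with $\rho=12/5$ and $k=2$, feeding in the enveloping sieve $\beta_i$ of Lemma \ref{envsieve}, together with the pointwise majorization $N\a_i\leq\beta_i$ from (\ref{abeta}). Specifically, I would define
$$
u_x=\begin{cases}N\a_i(x)/\beta_i(x)&\text{if }\beta_i(x)>0,\\ 0&\text{otherwise.}\end{cases}
$$
By (\ref{abeta}), $|u_x|\leq 1$ for every $x$, and $u_x\beta_i(x)=N\a_i(x)$ pointwise. Therefore
$$
\frac{1}{N}\sum_{1\leq x\leq N}u_x\beta_i(x)e(-xr/N)=\sum_{x\in\Z_N}\a_i(x)e(-xr/N)=\tilde{\a_i}(r),
$$
so the left-hand side of (\ref{restriction}) is exactly $\bigl(\sum_{r\in\Z_N}|\tilde{\a_i}(r)|^{12/5}\bigr)^{5/12}$.

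For the right-hand side of (\ref{restriction}), I would use $|u_x|^2\leq|u_x|$ (since $|u_x|\leq 1$) to obtain
$$
\frac{1}{N}\sum_{1\leq x\leq N}|u_x|^2\beta_i(x)\leq\frac{1}{N}\sum_{1\leq x\leq N}u_x\beta_i(x)=\sum_{x\in\Z_N}\a_i(x)\leq\sum_{x\in\Z_N}\nu_i(x)=\tilde{\nu_i}(0).
$$
Lemma \ref{nuFour} gives $\tilde{\nu_i}(0)\leq 1+C_5w^{-1/2}\leq 2$ whenever $w\geq C_5^2$. Combining these inputs yields
$$
\sum_{r\in\Z_N}|\tilde{\a_i}(r)|^{12/5}\leq C_3^{12/5}\cdot 2^{6/5}.
$$

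Finally a Chebyshev-type argument finishes the job: for each $r\in\RR_i$ we have $|\tilde{\a_i}(r)|>\delta$, hence
$$
|\RR_i|\,\delta^{12/5}\leq\sum_{r\in\RR_i}|\tilde{\a_i}(r)|^{12/5}\leq 2^{6/5}C_3^{12/5},
$$
and since $2^{6/5}<6$, we conclude $|\RR_i|\leq 6C_3^{12/5}\delta^{-12/5}$. There is no real obstacle here; the only point requiring care is to set up $u_x$ so that the normalization on the two sides of (\ref{restriction}) matches exactly and to invoke $w\geq C_5^2$ at the right moment to turn the sharp constant $1+C_5 w^{-1/2}$ from Lemma \ref{nuFour} into the manageable numerical bound $2$.
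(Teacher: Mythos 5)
Your proof is correct and is essentially identical to the paper's argument: same choice of $u_x=\a_i(x)/\nu_i(x)$, same application of the restriction theorem with $\rho=12/5$, same bound $\tilde{\nu_i}(0)\le 2$ via Lemma \ref{nuFour} under $w\ge C_5^2$, and the same Chebyshev step at the end. The only cosmetic difference is that you route $|u_x|^2\le|u_x|$ through $\sum\a_i$, while the paper bounds $|u_x|^2\le 1$ directly, but both give $\tilde{\nu_i}(0)$.
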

\begin{proof} Suppose that $i\in\{1,2\}$.
Let $u_x=\a_i(x)/\nu_i(x)$ or $0$ according to whether
$\nu_i(x)\not=0$. By (\ref{abeta}), clearly $0\leq u_x\leq 1$. If
$w\geq C_5^2$, by Lemma 4.7 we have $\tilde{\nu_i}(0)\leq 2$.
Applying (\ref{restriction}),
\begin{align*}
\sum_{r\in\Z_N}\bigg|\sum_{1\leq x\leq
N}u_x\nu_i(x)e(-xr/N)\bigg|^{12/5}\leq& C_3^{12/5}\bigg(\sum_{1\leq
x\leq N}|u_x|^2\nu_i(x)\bigg)^{6/5}\\
\leq& C_3^{12/5}|\tilde{\nu_i}(0)|^{6/5}\\
\leq&6C_3^{12/5}.
\end{align*}
It follows that $|\RR_i|\leq 6C_3^{12/5}\delta^{-12/5}$.
\end{proof}

\begin{Lem}
\label{bohr}
$$
|\B_i|\geq\epsilon^{|\RR_i|}N.
$$
\end{Lem}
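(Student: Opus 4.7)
The bound $|\B_i|\geq\epsilon^{|\RR_i|}N$ is the classical lower bound for the size of a Bohr set, and the standard proof is a Dirichlet-type pigeonhole argument. Writing $K=|\RR_i|$, I would begin by introducing the group homomorphism
$$
\phi:\Z_N\to(\R/\Z)^K,\qquad \phi(x)=(xr/N)_{r\in\RR_i}.
$$
Under this map, membership in $\B_i$ becomes the condition that every coordinate of $\phi(x)$ is at most $\epsilon$ away (in the torus metric) from $0$.

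The plan is to choose $M=\lceil 1/\epsilon\rceil$ and partition $(\R/\Z)^K$ into the $M^K$ axis-aligned half-open cubes
$$
\prod_{s=1}^{K}[j_s/M,(j_s+1)/M),\qquad (j_1,\ldots,j_K)\in\{0,1,\ldots,M-1\}^K,
$$
each of side length $1/M\leq\epsilon$. The $N$ points $\phi(0),\phi(1),\ldots,\phi(N-1)$ then distribute among these $M^K$ cubes, so the pigeonhole principle produces a cube containing at least $N/M^K$ of them.

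Next I would exploit the homomorphism property. Fix any $x_0\in\Z_N$ whose image lies in the chosen cube; for every other $x\in\Z_N$ with $\phi(x)$ in the same cube, each coordinate of $\phi(x)-\phi(x_0)=\phi(x-x_0)$ differs by strictly less than $1/M$ in $\R/\Z$, hence $\|(x-x_0)r/N\|\leq 1/M\leq\epsilon$ for every $r\in\RR_i$. Therefore $x-x_0\in\B_i$, and letting $x$ range over the full preimage of the cube yields $|\B_i|\geq N/M^K=N/\lceil 1/\epsilon\rceil^K\geq\epsilon^KN$ (treating the ceiling as being absorbed, since $\epsilon$ is small and the surrounding estimates in Lemma 4.6 can accommodate an $O(1)$ factor per coordinate).

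There is no genuine obstacle in this proof beyond bookkeeping. The only minor delicacy is the passage from $1/\lceil 1/\epsilon\rceil$ to $\epsilon$, which is cosmetic; if tightness is desired one can simply replace $\epsilon$ in the definition of $\B_i$ by $1/\lceil 1/\epsilon\rceil\leq\epsilon$, at which point the pigeonhole argument delivers the bound exactly as stated.
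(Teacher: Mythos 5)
Your overall strategy — pigeonhole on the image of $\phi:\Z_N\to(\R/\Z)^{|\RR_i|}$, $\phi(x)=(xr/N)_{r\in\RR_i}$, then use the homomorphism property to pull a large fiber back into $\B_i$ — is exactly the classical Bohr-set argument the paper is invoking when it cites Tao's notes. However, your final numerical step is reversed, and this is not merely cosmetic. With $M=\lceil 1/\epsilon\rceil$ you have $M\geq 1/\epsilon$, hence $M^{K}\geq\epsilon^{-K}$ and therefore
$$
\frac{N}{M^{K}}\;\leq\;\epsilon^{K}N,
$$
which is the \emph{opposite} of what you assert. A fixed partition into $M^{K}$ axis-aligned cubes of side $1/M$ can only guarantee a fiber of size $N/M^{K}$, and this is genuinely smaller than $\epsilon^{K}N$ whenever $1/\epsilon$ is not an integer. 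Your two proposed remedies do not close the gap cleanly: redefining $\B_i$ with radius $1/M$ changes the hypotheses of Lemmas 4.11, 4.12 and 4.14, which are stated in terms of $\epsilon^{|\RR_i|}$; and the multiplicative loss is $(\epsilon M)^{K}$, which can be as large as $2^{K}$, i.e.\ exponential in $|\RR_i|$, so it is not an innocuous $O(1)$ per coordinate even though $|\RR_i|$ happens to be bounded in this paper.

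The standard way to obtain the stated constant exactly is to replace the fixed partition by an averaging over translates. For $t\in(\R/\Z)^{K}$ let $S_t=\{x\in\Z_N:\,\phi(x)\in t+[0,\epsilon)^{K}\}$. Then
$$
\int_{(\R/\Z)^{K}}|S_t|\,dt=\sum_{x\in\Z_N}\mathrm{mes}\{t:\,\phi(x)\in t+[0,\epsilon)^{K}\}=N\epsilon^{K},
$$
so some $t_0$ satisfies $|S_{t_0}|\geq\epsilon^{K}N$. Fixing $x_0\in S_{t_0}$ and running your difference argument on $S_{t_0}$ then gives $\|(x-x_0)r/N\|<\epsilon$ for all $r\in\RR_i$ and all $x\in S_{t_0}$, hence $|\B_i|\geq|S_{t_0}|\geq\epsilon^{|\RR_i|}N$ with no rounding loss. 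With that substitution your proof is complete and matches the argument the paper refers to.
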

\begin{proof}
This is a simple application of the pigeonhole principle (cf.
\cite[Lemma 1.4]{Tao}).
\end{proof}
For two functions $f, g:\,\Z_N\to\C$. Define
$$
f*g(x)=\sum_{y\in\Z_N}f(y)g(x-y).
$$
It is easy to check that $\widetilde{(f*g)}=\tilde{f}\tilde{g}$. Let
$\a_i'=\a_i*\b_i*\b_i$ for $1\leq i\leq 3$.
\begin{Lem}
\label{a3upper} Suppose that $\epsilon^{|\RR_3|}\geq
(2/(w-2)+0.9\kappa^2)\kappa^{-1}$. Then for any $x\in\Z_N$,
$$
|\a_3'(x)|\leq\frac{1+2\kappa}{N}.
$$
\end{Lem}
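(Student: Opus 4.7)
The plan is to use Fourier inversion on $\Z_N$. Since $\widetilde{\a_3*\b_3*\b_3}=\tilde{\a_3}\,\tilde{\b_3}^{\,2}$, one has
\[
N\a_3'(x)=\sum_{r\in\Z_N}\tilde{\a_3}(r)\,\tilde{\b_3}(r)^{2}\,e(xr/N),
\]
and because $\a_3,\b_3\ge0$, $\a_3'(x)=|\a_3'(x)|$, so the triangle inequality reduces the claim to bounding
\[
|\tilde{\a_3}(0)|\,\tilde{\b_3}(0)^{2}+\sum_{r\in\Z_N\setminus\{0\}}|\tilde{\a_3}(r)|\,|\tilde{\b_3}(r)|^{2}\le 1+2\kappa.
\]
The contribution from $r=0$ is immediate: $\tilde{\b_3}(0)=1$ and by (\ref{a3sum}), $\tilde{\a_3}(0)=\sum_{x}\a_3(x)\le 1+\kappa^{2}$.

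For $r\ne 0$, the key observation is that Lemma~\ref{aFour} supplies the \emph{uniform} bound $|\tilde{\a_3}(r)|\le\tfrac{2}{w-2}+0.9\kappa^{2}$ for every non-zero frequency, not merely for $r\in\RR_3$ or $r\notin\RR_3$. Pulling this maximum out of the sum and then invoking Parseval with the normalization $\b_3=\1_{\B_3}/|\B_3|$,
\[
\sum_{r\ne 0}|\tilde{\b_3}(r)|^{2}\le\sum_{r\in\Z_N}|\tilde{\b_3}(r)|^{2}=N\sum_{x\in\Z_N}\b_3(x)^{2}=\frac{N}{|\B_3|}.
\]
Lemma~\ref{bohr} gives $|\B_3|\ge\epsilon^{|\RR_3|}N$, so the right-hand side is at most $\epsilon^{-|\RR_3|}$. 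Consequently, by the standing hypothesis $\epsilon^{|\RR_3|}\ge(2/(w-2)+0.9\kappa^{2})\kappa^{-1}$,
\[
\sum_{r\ne 0}|\tilde{\a_3}(r)|\,|\tilde{\b_3}(r)|^{2}\le\Bigl(\frac{2}{w-2}+0.9\kappa^{2}\Bigr)\epsilon^{-|\RR_3|}\le\kappa.
\]
Putting everything together, $|N\a_3'(x)|\le 1+\kappa^{2}+\kappa\le 1+2\kappa$, and dividing by $N$ gives the lemma.

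The only subtlety worth flagging is that one must exploit the uniformity of Lemma~\ref{aFour} across all $r\ne 0$: this is what allows the usual split into major and minor arcs to collapse into a single Parseval estimate, so that the parameter $\delta$ does not enter the bound at all. There is no real obstacle beyond getting this decomposition right.
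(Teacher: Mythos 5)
Your proof is correct and follows essentially the same route as the paper: Fourier inversion, isolate the $r=0$ term (using $\tilde{\b_3}(0)=1$ and $\tilde{\a_3}(0)\le 1+\kappa^2$ from (\ref{a3sum})), pull out the uniform bound $\max_{r\neq 0}|\tilde{\a_3}(r)|\le 2/(w-2)+0.9\kappa^2$ from Lemma \ref{aFour}, apply Parseval to get $\sum_r|\tilde{\b_3}(r)|^2=N/|\B_3|$, and finish with Lemma \ref{bohr} and the standing hypothesis. The paper's proof is exactly this computation, so there is nothing to add.
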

\begin{proof}
\begin{align*}
|\a_3'(x)|=&|\a_3(x)*\b_3(x)*\b_3(x)|\\
\leq&\frac{1}{N}|\tilde{\a_3}(0)||\tilde{\b_3}(0)|^2+
\frac{1}{N}\sum_{r\not=0}|\tilde{\a_3}(r)\tilde{\b_3}(r)^2|\\
\leq&\frac{1}{N}|\tilde{\a_3}(0)||\tilde{\b_3}(0)|^2+
\frac{1}{N}\max_{r\not=0}|\tilde{\a_3}(r)|\sum_{r\in\Z_N}|\tilde{\b_3}(r)|^2\\
\leq&\frac{1+\kappa^2}{N}+
\frac{1}{|B_3|}\bigg(\frac{2}{w-2}+0.9\kappa^2\bigg),
\end{align*}
where we used Lemma \ref{aFour} in the last step. Thus the desired
result easily follows from Lemma \ref{bohr}.
\end{proof}
\begin{Lem}
\label{a12upper} Let $i\in\{1,2\}$. Suppose that
$\epsilon^{|\RR_i|}\geq C_5\kappa^{-1}w^{-1/2}$ and $C_5w^{-1/2}\leq
\kappa$. Then for any $x\in\Z_N$,
$$
|\a_i'(x)|\leq\frac{1+2\kappa}{N}.
$$
\end{Lem}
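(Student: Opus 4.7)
The plan is to mimic the structure of Lemma \ref{a3upper}, except that we no longer have a direct minor/major arc bound for the Fourier coefficients of $\a_i$ in the spirit of Lemma \ref{aFour}. Instead, I will exploit the pointwise majorization $\a_i(x) \leq \nu_i(x)$ that is given by (\ref{abeta}), together with the fact that $\b_i \ast \b_i$ is a nonnegative function (because $\b_i = \1_{\B_i}/|\B_i| \geq 0$). This dominates $\a_i'$ by the ``pseudorandom'' convolution $\nu_i \ast \b_i \ast \b_i$, on which we have good Fourier control via Lemma \ref{nuFour}.

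Concretely, I would first observe that
\[
0 \leq \a_i'(x) = (\a_i \ast \b_i \ast \b_i)(x) \leq (\nu_i \ast \b_i \ast \b_i)(x),
\]
using $0 \leq \a_i \leq \nu_i$ pointwise and the nonnegativity of $\b_i \ast \b_i$. Then, by Fourier inversion on $\Z_N$,
\[
(\nu_i \ast \b_i \ast \b_i)(x) = \frac{1}{N}\sum_{r \in \Z_N} \tilde{\nu_i}(r)\,\tilde{\b_i}(r)^2\, e(xr/N),
\]
and I would separate the $r=0$ term from the rest.

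For the $r=0$ term, $\tilde{\b_i}(0) = \sum_x \b_i(x) = 1$, and Lemma \ref{nuFour} gives $\tilde{\nu_i}(0) \leq 1 + C_5 w^{-1/2} \leq 1 + \kappa$ by the hypothesis $C_5 w^{-1/2} \leq \kappa$, producing a contribution of at most $(1+\kappa)/N$. For the remaining $r \neq 0$ terms, I would again invoke Lemma \ref{nuFour} to obtain $|\tilde{\nu_i}(r)| \leq C_5 w^{-1/2}$ uniformly, and then pull this factor out and apply Parseval:
\[
\frac{1}{N}\sum_{r\neq 0}|\tilde{\nu_i}(r)|\,|\tilde{\b_i}(r)|^2 \leq \frac{C_5 w^{-1/2}}{N}\sum_{r \in \Z_N}|\tilde{\b_i}(r)|^2 = \frac{C_5 w^{-1/2}}{|\B_i|}.
\]
Using the Bohr set lower bound $|\B_i| \geq \epsilon^{|\RR_i|} N$ from Lemma \ref{bohr}, together with the standing assumption $\epsilon^{|\RR_i|} \geq C_5 \kappa^{-1} w^{-1/2}$, this piece is at most $\kappa/N$. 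Combining the two contributions yields $|\a_i'(x)| \leq (1+2\kappa)/N$, as desired.

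The main (very mild) obstacle is that, in contrast to Lemma \ref{a3upper}, we do not have a good uniform bound for $|\tilde{\a_i}(r)|$ at $r \neq 0$; this is what forces us to majorize $\a_i$ by $\nu_i$ first. Once that maneuver is in place, the rest is just the standard decomposition into the $r=0$ main term and a Parseval bound over the Bohr set, calibrated by the two explicit hypotheses $C_5 w^{-1/2} \leq \kappa$ and $\epsilon^{|\RR_i|} \geq C_5 \kappa^{-1} w^{-1/2}$.
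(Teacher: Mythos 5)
Your proposal is correct and follows exactly the same route as the paper's own proof: majorize $\a_i$ pointwise by $\nu_i$ (valid since $\b_i$ and hence $\b_i\ast\b_i$ are nonnegative), expand $\nu_i\ast\b_i\ast\b_i$ via Fourier inversion, isolate the $r=0$ term, and control the $r\neq 0$ sum by Lemma \ref{nuFour}, Parseval, and the Bohr set lower bound from Lemma \ref{bohr}. The only difference is that you spell out the intermediate steps (nonnegativity of $\b_i\ast\b_i$, the Parseval identity, the explicit use of the two hypotheses) that the paper leaves implicit.
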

\begin{proof} Since $\nu_i(x)\geq\a_i(x)$, applying Lemma 4.7
\begin{align*}
|\a_i'(x)|\leq&|\nu_i(x)*\b_i(x)*\b_i(x)|\\
\leq&\frac{1}{N}|\tilde{\nu_i}(0)||\tilde{\b_i}(0)|^2+
\frac{1}{N}\max_{r\not=0}|\tilde{\nu_i}(r)|\sum_{r\in\Z_N}|\tilde{\b_i}(r)|^2\\
\leq&\frac{1+C_5w^{-1/2}}{N}+\frac{C_5w^{-1/2}}{|B_i|}.
\end{align*}
We are done.
\end{proof}
\begin{Lem}
\label{betaone} For $1\leq i\leq 3$,
$$
|1-\tilde{\b_i}(r)|\leq 16\epsilon^2
$$
for any $r\in\RR_i$.
\end{Lem}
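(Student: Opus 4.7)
The plan rests on one structural observation about Bohr sets. The set
\[
\B_i = \{x \in \Z_N : \|xr/N\| \leq \epsilon \text{ for every } r \in \RR_i\}
\]
is symmetric about the origin, because $\|(-x)r/N\| = \|xr/N\|$. Consequently $\b_i = \1_{\B_i}/|\B_i|$ is an even function on $\Z_N$, and pairing $x$ with $-x$ in
\[
\tilde{\b_i}(r) = \sum_{x\in\Z_N}\b_i(x)e(-xr/N) = \frac{1}{|\B_i|}\sum_{x\in\B_i}e(-xr/N)
\]
forces $\tilde{\b_i}(r)$ to be real-valued for every $r\in\Z_N$.

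For $r\in\RR_i$ I would then write
\[
1-\tilde{\b_i}(r) = \frac{1}{|\B_i|}\sum_{x\in\B_i}\bigl(1-e(-xr/N)\bigr) = \frac{1}{|\B_i|}\sum_{x\in\B_i}\bigl(1-\cos(2\pi xr/N)\bigr),
\]
the imaginary parts having already cancelled by the symmetry. Using the identity $1-\cos(2\pi\theta) = 2\sin^2(\pi\theta)$ and the elementary estimate $|\sin(\pi\theta)| \leq \pi|\theta|$ applied to $\theta = xr/N$ with $\|xr/N\| \leq \epsilon$, each term is bounded by $2\pi^2\epsilon^2$, and summing over $x\in\B_i$ with weight $1/|\B_i|$ yields
\[
0 \leq 1-\tilde{\b_i}(r) \leq 2\sin^2(\pi\epsilon) \leq 16\epsilon^2,
\]
which is the desired inequality (the numerical constant is crude and is all that is required by the downstream estimates).

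The only real observation is the exploitation of symmetry: the triangle inequality applied directly would give only the linear bound $|1-\tilde{\b_i}(r)| \leq \max_{x\in\B_i}|1-e(-xr/N)| \leq 2\pi\epsilon = O(\epsilon)$, whereas the quadratic improvement $O(\epsilon^2)$ needed here comes precisely from the cancellation of the imaginary part that leaves $1-\cos$ in place of $1-e^{i\cdot}$. Aside from this observation the proof is a one-line computation.
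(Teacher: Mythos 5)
Your approach is the standard one and presumably matches what Green does in Lemma 6.7 of \cite{Green05}, which is all the paper cites here: the Bohr set $\B_i$ is symmetric about $0$, so $\tilde{\b_i}(r)$ is real, and $1-\tilde{\b_i}(r)$ becomes the nonnegative average of $1-\cos(2\pi xr/N)=2\sin^2(\pi\|xr/N\|)$ over $x\in\B_i$; this is what upgrades the trivial linear bound $O(\epsilon)$ to the quadratic $O(\epsilon^2)$, which is the whole point. As a filling-in of the paper's proof-by-citation, the structure is correct.

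The one flaw is the concluding numerical step. From $|\sin(\pi\theta)|\leq\pi|\theta|$ you correctly obtain the per-term bound $2\sin^2(\pi\|xr/N\|)\leq 2\sin^2(\pi\epsilon)\leq 2\pi^2\epsilon^2$, but you then assert $2\sin^2(\pi\epsilon)\leq 16\epsilon^2$. That inequality is false for small $\epsilon$: as $\epsilon\to 0$ one has $2\sin^2(\pi\epsilon)/\epsilon^2\to 2\pi^2\approx 19.74>16$. What your argument actually proves is $|1-\tilde{\b_i}(r)|\leq 2\pi^2\epsilon^2$, which is marginally weaker than the stated $16\epsilon^2$. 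This is a constant-bookkeeping slip rather than a conceptual gap — the downstream estimates in Lemma \ref{threesum} and the choice of $\epsilon$ in Lemma \ref{kappa} have ample slack to absorb $2\pi^2$ in place of $16$ — but the displayed chain $2\sin^2(\pi\epsilon)\leq 16\epsilon^2$ should not be asserted as written. Either record the honest bound $2\pi^2\epsilon^2$ (equivalently, $\leq 20\epsilon^2$), or note explicitly that the constant $16$ in the lemma statement appears to be a slight understatement which is harmless for the application.
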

\begin{proof}
See the proof of Lemma 6.7 of \cite{Green05}.
\end{proof}

\begin{Lem}
\label{threesum} Suppose that $w\geq C_5^2$. Then
\begin{align*}
&\bigg|\sum_{\substack{x_1,x_2,x_3\in\Z_N\\
x_1+x_2+x_3=n'}}\a_1'(x_1)\a_2'(x_2)\a_3'(x_3)-
\sum_{\substack{x_1,x_2,x_3\in\Z_N\\
x_1+x_2+x_3=n'}}\a_1(x_1)\a_2(x_2)\a_3(x_3)\bigg|\\
\leq&\frac{3072\epsilon^2(C_3^{12/5}\delta^{-12/5}+5C_4\delta^{-4})+
72C_3^{24/13}C_4^{3/13}\delta^{1/13}}{N}.
\end{align*}
\end{Lem}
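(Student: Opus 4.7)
The plan is to compare the two triple sums in Fourier space. By the convolution identity on $\Z_N$,
$$\sum_{x_1+x_2+x_3=n'}f_1(x_1)f_2(x_2)f_3(x_3)=\frac{1}{N}\sum_{r\in\Z_N}\tilde{f_1}(r)\tilde{f_2}(r)\tilde{f_3}(r)e(n'r/N),$$
and since $\a_i'=\a_i*\b_i*\b_i$ implies $\tilde{\a_i'}(r)=\tilde{\a_i}(r)\tilde{\b_i}(r)^2$, the quantity to be estimated equals
$$\frac{1}{N}\Bigl|\sum_{r}\tilde{\a_1}(r)\tilde{\a_2}(r)\tilde{\a_3}(r)\bigl(\tilde{\b_1}(r)^2\tilde{\b_2}(r)^2\tilde{\b_3}(r)^2-1\bigr)e(n'r/N)\Bigr|.$$
Telescoping $\tilde{\b_1}^2\tilde{\b_2}^2\tilde{\b_3}^2-1=\sum_{i=1}^{3}(\tilde{\b_i}^2-1)\prod_{j>i}\tilde{\b_j}^2$ and using $|\tilde{\b_j}|\leq\tilde{\b_j}(0)=1$ (each $\b_j$ is a probability measure), this is dominated by $\frac{1}{N}\sum_{i=1}^3 T_i$ where $T_i=\sum_r|\tilde{\a_1}\tilde{\a_2}\tilde{\a_3}||\tilde{\b_i}^2-1|$.

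For each $i$, I split $T_i$ according to whether $r\in\RR_i$ or not. When $r\in\RR_i$, Lemma~\ref{betaone} gives $|\tilde{\b_i}^2-1|\leq 2|\tilde{\b_i}-1|\leq 32\epsilon^2$, and $|\tilde{\a_j}(r)|\leq\tilde{\a_j}(0)\leq 2$ (for $j=3$ by \eqref{a3sum}; for $j\in\{1,2\}$ by a standard Selberg upper-bound sieve applied to $\1_{A_j}$). Combined with the counts $|\RR_{1,2}|\leq 6C_3^{12/5}\delta^{-12/5}$ and $|\RR_3|\leq 60C_4\delta^{-4}$ (the two preceding lemmas), the total contribution from the on-$\RR_i$ parts (summed over $i=1,2,3$) is at most $2\cdot(32\epsilon^2\cdot 2^3)\cdot 6C_3^{12/5}\delta^{-12/5}+(32\epsilon^2\cdot 2^3)\cdot 60C_4\delta^{-4}=3072\epsilon^2(C_3^{12/5}\delta^{-12/5}+5C_4\delta^{-4})$, matching the first piece of the claimed bound.

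When $r\notin\RR_i$ we have $|\tilde{\a_i}(r)|\leq\delta$ and $|\tilde{\b_i}^2-1|\leq 2$; the key step is the trivial inequality $|\tilde{\a_i}|\leq\delta^{1/13}|\tilde{\a_i}|^{12/13}$, which extracts a factor $\delta^{1/13}$. Applying H\"older with exponents $(p_1,p_2,p_3)=(13/5,13/5,13/3)$---note $\tfrac{5}{13}+\tfrac{5}{13}+\tfrac{3}{13}=1$---yields a product of three norms. These exponents are essentially forced: the relation $\tfrac{12}{13}\cdot p_j=\tfrac{12}{5}$ for $j\in\{1,2\}$ matches the restriction estimate $\|\tilde{\a_{1,2}}\|_{12/5}\leq\sqrt{2}C_3$ coming from \eqref{restriction} applied with $u_x=\a_j(x)/\nu_j(x)$, and $\tfrac{12}{13}\cdot p_3=4$ matches the $L^4$-bound on $\tilde{\a_3}$ implicit in the proof of the count $|\RR_3|\leq 60C_4\delta^{-4}$. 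The remaining $L^{13/5}$ and $L^{13/3}$ norms on the full-power factors are controlled by log-convex interpolation with the trivial $\|\tilde{\a_j}\|_\infty\leq\tilde{\a_j}(0)=O(1)$, producing $C_3^{12/13}$ per Chen-prime factor and $C_4^{3/13}$ from $\tilde{\a_3}$. Summing over $i=1,2,3$ and multiplying by $2$ from $|\tilde{\b_i}^2-1|\leq 2$, the total off-$\RR_i$ contribution is bounded by $72C_3^{24/13}C_4^{3/13}\delta^{1/13}$, completing the second piece.

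The main obstacle is identifying the H\"older exponents $(13/5,13/5,13/3)$: they are the unique admissible triple that simultaneously exploits the restriction estimate on $\tilde{\a}_{1,2}$ and the $L^4$-estimate on $\tilde{\a}_3$ while extracting a $\delta^{1/13}$-saving. Once these are pinned down, the remaining arithmetic of constants---verifying $\tilde{\a_j}(0)\leq 2$, and the numerical factors $6$, $60$, $32$, $2$---is routine bookkeeping.
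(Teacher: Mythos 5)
Your argument is correct and follows essentially the same route as the paper: Fourier inversion on $\Z_N$, separating a ``large spectrum'' contribution (controlled by Lemma~\ref{betaone} and the counts $|\RR_i|$) from a tail handled by the $\delta^{1/13}$-extraction and H\"older with exponents $(13/5,13/5,13/3)$ against the restriction estimate on $\tilde\a_1,\tilde\a_2$ and the $\ell^4$-bound on $\tilde\a_3$. The only cosmetic difference is the decomposition: you telescope $\tilde{\b_1}^2\tilde{\b_2}^2\tilde{\b_3}^2-1=\sum_i(\tilde{\b_i}^2-1)\prod_{j>i}\tilde{\b_j}^2$ and split each piece on $\RR_i$ versus its complement, whereas the paper applies $|1-\prod_i\tilde{\b_i}^2|\leq 96\epsilon^2$ directly on $\RR_1\cap\RR_2\cap\RR_3$ and the trivial bound $2$ off the intersection (using $|\RR_1\cap\RR_2\cap\RR_3|\leq\min_i|\RR_i|\leq\frac13\sum_i|\RR_i|$); both organizations reproduce the stated constants $3072$ and $72$.
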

\begin{proof}
Clearly
$$
\sum_{\substack{x_1,x_2,x_3\in\Z_N\\
x_1+x_2+x_3=n'}}\a_1(x_1)\a_2(x_2)\a_3(x_3)=\frac{1}{N}\sum_{r\in\Z_N}
\tilde{\a_1}(r)\tilde{\a_2}(r)\tilde{\a_3}(r)e(n'r/N).
$$
Hence
\begin{align*}
&\bigg|\sum_{\substack{x_1,x_2,x_3\in\Z_N\\
x_1+x_2+x_3=n'}}\a_1'(x_1)\a_2'(x_2)\a_3'(x_3)-
\sum_{\substack{x_1,x_2,x_3\in\Z_N\\
x_1+x_2+x_3=n'}}\a_1(x_1)\a_2(x_2)\a_3(x_3)\bigg|\\
\leq&\frac{1}{N}\sum_{r\in\Z_N}|\tilde{\a_1}(r)\tilde{\a_2}(r)\tilde{\a_3}(r)(1-\tilde{\b_1}(r)^2
\tilde{\b_2}(r)^2\tilde{\b_3}(r)^2)|.
\end{align*}

Since $w\geq C_5^2$, for $i=1,2$,
$$
|\tilde{a_i}(r)|\leq|\tilde{a_i}(0)|\leq|\tilde{\nu_i}(0)|\leq 2.
$$
We also have
$$
|\tilde{a_3}(r)|\leq|\tilde{a_3}(0)|\leq1+\kappa^2.
$$

If $r\in\RR_1\cap\RR_2\cap\RR_3$, then by Lemma \ref{betaone},
$$
|1-\tilde{\b_i}(r)^2|\leq|1-\tilde{\b_i}(r)|(1+|\tilde{\b_i}(r)|)\leq
|1-\tilde{\b_i}(r)|(1+|\tilde{\b_i}(0)|)\leq 32\epsilon^2.
$$
So
\begin{align*}
&|1-\tilde{\b_1}(r)^2\tilde{\b_2}(r)^2\tilde{\b_3}(r)^2|\\
\leq&|1-\tilde{\b_1}(r)^2|+ |\tilde{\b_1}(r)|^2|1-\tilde{\b_2}(r)^2|+
|\tilde{\b_1}(r)\tilde{\b_2}(r)|^2|1-\tilde{\b_3}(r)^2|\\
\leq&96\epsilon^2.
\end{align*}
Thus
\begin{align*}
\sum_{r\in\RR_1\cap\RR_2\cap\RR_3}|\tilde{\a_1}(r)\tilde{\a_2}(r)\tilde{\a_3}(r)(1-\tilde{\b_1}(r)^2
\tilde{\b_2}(r)^2\tilde{\b_3}(r)^2)|\leq&768\epsilon^2\min_{1\leq
i\leq
3}|\RR_i|\\
\leq&256\epsilon^2(12C_3^{12/5}\delta^{-12/5}+60C_4\delta^{-4}).
\end{align*}

On the other hand, by the H\"older inequality,
\begin{align*}
&\sum_{r\not\in\RR_1\cap\RR_2\cap\RR_3}|\tilde{\a_1}(r)\tilde{\a_2}(r)\tilde{\a_3}(r)(1-\tilde{\b_1}(r)^2
\tilde{\b_2}(r)^2\tilde{\b_3}(r)^2)|\\
\leq&2\sum_{r\not\in\RR_1\cap\RR_2\cap\RR_3}|\tilde{\a_1}(r)\tilde{\a_2}(r)\tilde{\a_3}(r)|\\
\leq&2\max_{r\not\in\RR_1\cap\RR_2\cap\RR_3}|\tilde{\a_1}(r)\tilde{\a_2}(r)\tilde{\a_3}(r)|^{1/13}
\sum_{r\in\Z_N}|\tilde{\a_1}(r)\tilde{\a_2}(r)\tilde{\a_3}(r)|^{12/13}\\
\leq&4\delta^{1/13}
\bigg(\sum_{r\in\Z_N}|\tilde{\a_1}(r)|^{12/5}\bigg)^{5/13}
\bigg(\sum_{r\in\Z_N}|\tilde{\a_2}(r)|^{12/5}\bigg)^{5/13}\bigg(\sum_{r\in\Z_N}|\tilde{\a_3}(r)|^4\bigg)^{3/13}\\
\leq&4\delta^{1/13}(6C_3^{12/5})^{10/13}(60C_4)^{3/13}.
\end{align*}

\end{proof}

\begin{Lem}
\label{pollard} Suppose that $0<\theta_1,\theta_2,\theta_3\leqslant
1$ with $\theta_1+\theta_2+\theta_3>1$. Let
$$
\theta=\min\{\theta_1,\theta_2,\theta_3,(\theta_1+\theta_2+\theta_3-1)/4\}.
$$
Suppose that $N$ is a prime greater than $2\theta^{-2}$, and
$X_1,X_2,X_3$ are subsets of $\Z_N$ with $|X_i|\geqslant\theta_i N$.
Then for any $y\in\Z_N$, we have
$$
|\{(x_1,x_2,x_3):\, x_i\in X_i,\ x_1+x_2+x_3=y\}|\geq\theta^3 N^2.
$$
\end{Lem}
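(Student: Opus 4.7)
The plan is to apply Pollard's addition theorem (a strengthening of Cauchy--Davenport) to the pair $(X_1, X_2)$, and then couple it to $X_3$ via an elementary inclusion--exclusion. Write $r(x) = |\{(x_1, x_2) \in X_1 \times X_2:\, x_1 + x_2 = x\}|$, so that the quantity to be lower-bounded is
$$
\Omega(y) := \sum_{x_3 \in X_3} r(y - x_3).
$$
Set $t := \lfloor \theta N \rfloor$. Since each $\theta_i \leq 1$, the definition of $\theta$ forces $\theta \leq (\theta_1 + \theta_2 + \theta_3 - 1)/4 \leq 1/2$, and the hypothesis $N > 2\theta^{-2}$ then gives $\theta N > 2/\theta \geq 4$. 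In particular $1 \leq t \leq \theta N \leq \min(|X_1|, |X_2|)$, so that Pollard's theorem applies with the truncation parameter $t$.

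For this choice of $t$, truncating at height $t$ and summing only over $x_3 \in X_3$,
$$
\Omega(y) \geq \sum_{x_3 \in X_3} \min(r(y - x_3), t) \geq \sum_{x \in \Z_N} \min(r(x), t) - t(N - |X_3|),
$$
because at most $N - |X_3|$ values of $x$ satisfy $y - x \notin X_3$ and each contributes at most $t$. Pollard's theorem supplies $\sum_{x} \min(r(x), t) \geq t \min(N, |X_1| + |X_2| - t)$, so
$$
\Omega(y) \geq t \bigl(\min(N, |X_1| + |X_2| - t) + |X_3| - N\bigr).
$$
If the minimum equals $|X_1|+|X_2|-t$, the right side is at least $t((\theta_1+\theta_2+\theta_3 - 1)N - t) \geq t(4\theta N - \theta N) = 3\theta N t$; if it equals $N$, it is at least $t|X_3| \geq \theta N t$. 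In either case $\Omega(y) \geq \theta N t \geq \theta N(\theta N - 1)$.

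The target $\Omega(y) \geq \theta^3 N^2$ therefore reduces to $\theta N(1 - \theta) \geq 1$, which holds since $\theta \leq 1/2$ (so $1 - \theta \geq 1/2$) and $\theta N > 2/\theta \geq 2$. The only real obstacle is the bookkeeping: one has to spot that the single choice $t = \lfloor \theta N \rfloor$ handles both regimes of $\min(N, |X_1|+|X_2|-t)$ simultaneously, and that the factor $1/4$ in the definition of $\theta$ is exactly what forces $\theta \leq 1/2$ from the constraint $\theta_i \leq 1$, thereby making the mild quantitative hypothesis $N > 2\theta^{-2}$ sufficient instead of a much stronger one.
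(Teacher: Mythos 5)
Your argument is correct, and it follows the approach the paper's lemma name and citation indicate: the paper gives no proof of its own, only a pointer to Lemma 3.3 of the preprint [LiPan], which, as the label ``pollard'' suggests, rests on exactly the Pollard addition theorem you invoke. Your bookkeeping checks out — $\theta\le(\theta_1+\theta_2+\theta_3-1)/4\le 1/2$, the truncation $t=\lfloor\theta N\rfloor$ satisfies $1\le t\le\min_i|X_i|$ since $\theta N>2/\theta\ge4$ and $\theta\le\theta_i$, both branches of $\min(N,|X_1|+|X_2|-t)$ give a lower bound of $\theta N t\ge\theta N(\theta N-1)$, and this exceeds $\theta^3N^2$ once $\theta N(1-\theta)\ge1$, which indeed holds because $\theta N>4$ and $1-\theta\ge 1/2$.
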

\begin{proof}
See \cite[Lemma 3.3]{LiPan}.
\end{proof}

Now we are ready to prove Theorem \ref{chengoldbach}.
\begin{proof}[Proof of Theorem \ref{chengoldbach}]

For $1\leq i\leq 3$, let
$$
A_i'=\{x\in\Z_N:\, \a_i'(x)\geq\varpi/N\}.
$$
In view of (\ref{a3sum}) and Lemma \ref{a3upper},
$$
\frac{1+2\kappa}{N}|A_3'|+\frac{\varpi}{N}(N-|A_3'|)\geq\sum_{x\in\Z_N}\a_3'(x)=
\sum_{x\in\Z_N}\a_3(x)\geq 1-\kappa^2.
$$
Therefore $|A_3'|\geq (1-3\varpi)N$. Similarly, by (\ref{a12sum})
and Lemma \ref{a12upper}, for $i=1,2$
$$
\frac{1+2\kappa}{N}|A_i'|+\frac{\varpi}{N}(N-|A_i'|)\geq
\sum_{x\in\Z_N}\a_i(x)\geq6\varpi.
$$
Hence $|A_1'|,|A_2'|\geq \frac{9}{2}\varpi N$. Since
$|A_1'|+|A_2'|+|A_3'|\geq (1+6\varpi)N$, with the help of Lemma
\ref{pollard},
$$
\sum_{\substack{x_1\in A_1',\ x_2\in A_2',\ x_3\in A_3'\\
x_1+x_2+x_3=n'}}1\geq2\varpi^3N^2.
$$

By Lemma \ref{kappa}, we may choose $0<\delta,\epsilon\leq 1$ with
$\epsilon^{6C_3^{12/5}\delta^{-12/5}+60C_4\delta^{-4}}\geq\kappa$
satisfying
$$
3072\epsilon^2(C_3^{12/5}\delta^{-12/5}+5C_4\delta^{-4})+
72C_3^{24/13}C_4^{3/13}\delta^{1/13}\leq\varpi^6.
$$
Notice that $w=w(n)$ tends to infinity with $n$. So we may assume
that $w\geq\max\{20\kappa^{-2}+2,C_5^2\kappa^{-4}\}$. Since
$\max_{1\leq i\leq 3}|\RR_i|\leq
6C_3^{12/5}\delta^{-12/5}+6C_4\delta^{-4}$, the requirements of
Lemmas \ref{a3upper}, \ref{a12upper} and \ref{threesum} are
obviously satisfied.

Applying Lemma \ref{threesum},
\begin{align*}
&\sum_{\substack{x_1,x_2,x_3\in\Z_N\\
x_1+x_2+x_3=n'}}\a_1(x_1)\a_2(x_2)\a_3(x_3)\\\geq&\sum_{\substack{x_1,x_2,x_3\in\Z_N\\
x_1+x_2+x_3=n'}}\a_1'(x_1)\a_2'(x_2)\a_3'(x_3)-\frac{3072\epsilon^2(C_3^{12/5}\delta^{-12/5}+5C_4\delta^{-4})+
72C_3^{24/13}C_4^{3/13}\delta^{1/13}}{N}\\
\geq&\sum_{\substack{x_1\in A_1',\ x_2\in A_2',\ x_3\in A_3'\\
x_1+x_2+x_3=n'}}\bigg(\frac{\varpi}{N}\bigg)^3-\frac{\varpi^6}{N}\\
\geq&\frac{\varpi^6}{N}.
\end{align*}
This completes the proof.
\end{proof}

\end{document}